\documentclass[11pt]{article}
\usepackage[utf8]{inputenc}
\usepackage[babel]{microtype}
\usepackage[english]{babel}
\usepackage{fullpage}
\usepackage{comment}

\usepackage{amsmath,amssymb,amsthm, mathrsfs, mathtools,bbm,bm}
\usepackage[usenames,dvipsnames]{xcolor}
\usepackage{multirow}
\usepackage{tikz-cd,tikz}
\usetikzlibrary{arrows.meta, positioning}
\usetikzlibrary{decorations.pathreplacing, calligraphy}
\usepackage{enumitem}
\usepackage{adjustbox}
    \allowdisplaybreaks
\usepackage{graphicx} 
\usepackage{tikz}
\usepackage{subcaption}
\usepackage{float}
\usepackage{tikz,pgfplots}
\usetikzlibrary{arrows.meta}
\usetikzlibrary{decorations.pathreplacing,calligraphy}
\usetikzlibrary{positioning}

\setlist[enumerate,1]{label={(\roman*)}}
\setlist[enumerate,2]{label={(\alph*)}}
\setlist[enumerate,3]{label={(\arabic*)}}
\tikzcdset{scale cd/.style={every label/.append style={scale=#1},
    cells={nodes={scale=#1}}}}
\usepackage[hypertexnames=false]{hyperref}
\hypersetup{
    colorlinks,
    linkcolor={red!60!black},
    citecolor={green!60!black},
    urlcolor={blue!60!black}
}

\newcommand{\shi}{\mathcal{HS}}
\newcommand{\SSS}{\mathfrak{S}}
\newcommand{\lin}{\mathcal{H}}

\newcommand{\R}{\mathbb{R}}
\newcommand{\RR}{\mathcal{R}}
\newcommand{\idt}{\mathcal{T}}
\newcommand{\Z}{\mathbb{Z}}
\newcommand{\sg}{\mathfrak S}

\newcommand{\N}{\mathbb{N}}
\newcommand{\cl}[1]{\mathcal{#1}}

\DeclareMathOperator{\st}{\,|\,}

\DeclareMathOperator{\des}{des}
\DeclareMathOperator{\DES}{DES}
\DeclareMathOperator{\pw}{pw}

\DeclareMathOperator{\cyc}{cyc}
\DeclareMathOperator{\asc}{asc}
\DeclareMathOperator{\ASC}{ASC}
\DeclareMathOperator{\ID}{ID}
\DeclareMathOperator{\nbc}{NBC}
\DeclareMathOperator{\ch}{ch}
\DeclareMathOperator{\rk}{rk}

\newtheorem{theorem}{Theorem}[section]
\newtheorem{proposition}[theorem]{Proposition}
\newtheorem{corollary}[theorem]{Corollary}
\newtheorem{lemma}[theorem]{Lemma}

\newtheorem{question}[theorem]{Question}

\theoremstyle{definition}

\newtheorem{example}{Example}
\numberwithin{equation}{section}

\title{Homogenized Graphical Shi Arrangements and Deformed Dumont Permutations}
\author{
	Sauvik Poddar\\
    Department of Mathematics,\\
    Presidency University Kolkata,\\
    Kolkata 700073,\\
    West Bengal, India\\
	Email: \texttt{sauvikpoddar1997@gmail.com}
	\and
	Rutuja Sawant\thanks{The author is partially supported by a grant from the Infosys Foundation.}\\ Department of Mathematics,\\ Chennai Mathematical Institute,\\ Chennai 603103,\\ Tamil Nadu, India\\
    Email: \texttt{rutuja@cmi.ac.in}\\
    \and
	Umesh Shankar\\
	Department of Computer Science and Automation,\\ Indian Institute of Science Bengaluru,\\ Bengaluru 560012,\\ Karnataka, India\\ Email: \texttt{umeshshankar@outlook.com}\\
	}
\date{\today}

\begin{document}
\maketitle

\begin{abstract}
We introduce the homogenized graphical Shi arrangement associated with a simple undirected graph $G$, which serves as a broad generalization of classical deformations of the braid arrangement, including the Shi and homogenized Linial arrangements studied by Lazar and Wachs. We demonstrate that the intersection lattices of certain homogenized graphical Shi arrangements are isomorphic to the bond lattices of naturally associated graphs. For a distinguished family of graphs, by employing non-broken-circuit (NBC) techniques, we obtain explicit combinatorial interpretations of the arrangement's M\"obius function and characteristic polynomial. We achieve this by introducing generalisations of previously studied combinatorial objects: $\mathcal{R}$-deformed increasing-decreasing ($\mathcal{R}$-DID) forests and $\mathcal{R}$-D-permutations, establishing bijections between them to interpret the coefficients of the characteristic polynomial in terms of $\mathcal{R}$-D-permutations with prescribed number of cycles. Furthermore, we explore refinements of $\mathcal{R}$-D-permutations by their starting letters. Finally, we also resolve an open conjecture posed by Deutsch, Kitaev, and Remmel concerning the equidistribution of specific parity-constrained descent and ascent statistics.
\end{abstract}
\textbf{\small{}Keywords:}{\small{} hyperplane arrangement, characteristic polynomial, Dumont permutations, Linial arrangement, graphical Shi arrangements, Genocchi numbers,
increasing-decreasing forests.  }{\let\thefootnote\relax\footnotetext{2020 \textit{Mathematics Subject Classification}. Primary: 52C35; Secondary: 05A05, 05A15, 05C22, 06A07, 20B30}}


\section{Introduction}
The \emph{braid arrangement} (or \emph{type A Coxeter arrangement}) $\mathcal{A}_{n-1}$ is the arrangement of the hyperplanes given by 
\begin{equation}
    x_i-x_j=0,\hspace{1cm}  \ 1\le i<j\le n.
\end{equation}
The arrangement has $n!$ regions as the hyperplanes divide $\mathbb{R}^n$ into open cones of the form 
\begin{equation*}
    \{(x_1,\dots,x_n)\in \mathbb{R}^n : x_{\sigma(1)}<x_{\sigma(2)}<\cdots<x_{\sigma(n)} \},
\end{equation*}
where $\sigma$ is a permutation in the symmetric group $\SSS_n$. 

Zaslavsky \cite{zaslavsky-facing-arrangments} gives the number of regions of any real hyperplane arrangement $\mathcal{A}$ in terms of the M{\"o}bius function of its intersection lattice $\mathcal{L}(\mathcal{A})$. The \emph{characteristic polynomial} of a finite, ranked poset $P$ of length $\ell$, with minimum element $\hat{0}$ is defined to be
\begin{equation}
    \chi_{P}(t)\coloneqq \sum_{x\in P} \mu_{P} (\hat{0},x) t^{\ell - \rk(x)}
\end{equation}
where $\mu_P$ is the M{\"o}bius function of $P$ and $\rk(x)$ is the rank of the element $x$.

Zaslavsky shows that the number of regions of $\mathcal{A}$, $r(\mathcal{A})$, is 
\begin{equation}\label{Zaslavsky-region-formula}
    r(\mathcal{A})=(-1)^\ell \chi_{\mathcal{L}(\mathcal{A})}(-1). 
\end{equation}
Using the fact that every interval of the intersection poset of an arrangement is a geometric lattice, we have 
\begin{equation}
    \chi_\mathcal{A}(t)=\sum_{k=1}^n (-1)^{n-k}c_k t^{k}  
\end{equation}
where $c_k$ are non-negative integers for all $0\le k\le n$ (see, Stanley's notes \cite{stanley-hyperplane-notes}). When the regions of a hyperplane arrangement are in bijective correspondence with a certain combinatorial object, we could ask if there is a `statistic' on the combinatorial object whose distribution is given by the $c_k$'s. 

It is known and easy to see that the lattice of intersections of the braid arrangement $\mathcal{A}_{n-1}$ is isomorphic to the lattice $\Pi_n$ of partitions of the set $[n]$. It is also well known that the characteristic polynomial of $\Pi_n$ is given by 
\begin{equation}
    \chi_{\Pi_n}(t)=\sum_{k=1}^n s(n,k)t^{k-1},
\end{equation}
where $s(n,k)$ is the Stirling number of the first kind, which is equal to $(-1)^{n-k}$ times the number of permutations in $\SSS_n$ with exactly $k$ cycles (see, Stanley's notes \cite{stanley-hyperplane-notes}). 

Many deformations and generalisations of the braid arrangements are often studied in the literature, and in many cases, the number of regions is known. The regions of deformations of the braid arrangement often bijectively correspond to well-studied combinatorial objects (see, Stanley's notes \cite{stanley-hyperplane-notes}). One of the deformations of the braid arrangements is the \emph{Shi arrangement}. It is given by the hyperplanes in $\mathbb{R}^n$ defined by
\begin{eqnarray}\label{hyp: shi}
x_i-x_j=0,& \hspace{1cm} 1\le i<j\le n,\\
x_i-x_j=1,&  \hspace{1cm} 1\le i<j\le n.
\end{eqnarray}
By the work of Stanley \cite{stanley-hyperplane-interval-orders}, and Athanasiadis and Linusson \cite{athana-linusson-bij-shi-regions}, we know that the regions of the Shi arrangement corresponds bijectively to the number of parking functions on $[n]$.

Another hyperplane arrangement related to the braid arrangement is the \emph{Linial arrangement}. The Linial arrangement in $\mathbb{R}^n$ is defined by 
\begin{equation}
    x_i-x_j=1, \hspace{1cm} 1\le i\le j\le n.
\end{equation}
Postnikov and Stanley \cite{stanley-post-coxeter} show that the number of regions of this arrangement is the number of alternating trees on the nodes $[n+1]$, where a tree is \emph{alternating} if each vertex is either greater than all its neighbours or smaller than all its neighbours.

Hetyei \cite{hetyei-acyclic-tourney} introduced the \emph{homogenised Linial arrangement}, which is the hyperplane arrangement in
\begin{equation*}
    \{(x_1,\dots,x_n,y_1,\dots,y_{n-1})\in \mathbb{R}: x_i\in \mathbb{R} \ \forall i\in [n], \mbox{ and } y_i\in \mathbb{R}\ \forall i\in[n-1] \}=\mathbb{R}^{2n-1}
\end{equation*}
given by
\begin{equation}\label{eq:HLA}
    \lin_{2n-3} \coloneqq \{ x_i - x_j = y_i : 1\le i< j \le n \}.
\end{equation}
Hetyei proves that $r(\lin_{2n-1})=h_n$, where $h_n$ is the median Genocchi number. The intersection lattice $\mathcal{L}(\lin_{2n-1})$ and its characteristic polynomial were studied by Lazar and Wachs \cite{lazar-wachs-hom-linial}. Observing that the intersection lattice of this arrangement is isomorphic to the bond lattice of a certain bipartite graph, they obtain an interpretation of the number of regions of the homogenised Linial arrangement, the median Genocchi numbers, as the number of \emph{D-permutations}. The D-permutations are permutations $\sigma$ of $[2n]$ such that $i\le \sigma(i)$ if $i$ is odd, and $i\ge \sigma(i)$ whenever $i$ is even. 

They obtain the following interpretation of the coefficients of the characteristic polynomial of $\lin_{2n-1}$.
\begin{equation}
    \chi_{\lin_{2n-1}}(t)=\sum_{k=1}^{2n} s_D(2n,k)t^{k-1},
\end{equation}
where $(-1)^ks_D(2n,k)$ is equal to the number of D-permutations on $[2n]$ with exactly $k$ cycles.

Let $G=(V,E)$ be a simple, undirected graph such that $V=[n]$. In this work, we study hyperplane arrangements of the following form.
\begin{eqnarray}
    x_i-x_j=0,& \hspace{1cm} i<j,\ (i,j)\in E(G)\\
    x_i-x_j=y_i,& \hspace{1cm} 1\le i< j\le n.
\end{eqnarray}
We call this hyperplane arrangement \emph{homogenised graphical Shi arrangement} and denote it by $\lin_{2n-3}(G)$ .
When the graph $G$ is the complete graph $K_n$, then we get the \emph{homogenised Shi arrangement} in $\mathbb{R}^{2n-1}$ defined by 
\begin{eqnarray}
    \shi_{2n-1}\coloneqq \{x_i-x_j=0: 1\le i<j\le n \} \cup \{x_i-x_j=y_i: 1\le i<j\le n\}.
\end{eqnarray}
Intersecting this arrangment by $y_1=\cdots=y_{n-1}=1$ gives us the Shi arrangement described by Equations \ref{hyp: shi}. Also, when $G$ is edgeless graph on $n$ vertices, then we get the {\em homogenised Linial arrangement} defined in Equation \ref{eq:HLA}.

We now summarize the main results of this paper section-wise. The main goal of Sections~\ref{sec3} and~\ref{sec4} is to generalize the combinatorial structure underlying the homogenized Linial arrangement to a broader family of hyperplane arrangements, namely the homogenized graphical Shi arrangements. Motivated by the work of Lazar and Wachs on the intersection lattice of the homogenized Linial arrangement, in Section~\ref{sec3} we study the homogenized graphical Shi arrangement associated with an arbitrary graph $G$. We show that its intersection lattice is isomorphic to the bond lattice of a naturally associated graph. This identification enables us to study the M\"obius function and characteristic polynomial using non-broken-circuit techniques, and also yields a combinatorial description of the lattice elements in terms of suitable parity conditions on the blocks of set partitions.

In Section~\ref{sec4}, we specialize to a distinguished family of graphs with are a union of a clique and an independent set, whose associated bond lattices admit a richer combinatorial structure. To encode the corresponding NBC forests combinatorially, we introduce the notion of $\RR$-deformed increasing-decreasing forests ($\RR$-DID forests), where $\RR$ corresponds to the vertex set of the clique. These forests generalize the increasing-decreasing forests arising in the classical homogenized Linial arrangement setting. Our main result in Section \ref{sec4} is the following. 

\begin{theorem}\label{thm: char-poly-DID} 
Let $\mathcal{R} = \{1,3,\ldots,m\}$, where $m$ is odd integer. For all $\pi \in \Pi_{\Gamma_{2n-1}^{m}}$, we have that $(-1)^{|\pi|} \mu(\hat 0,\pi)$ equals the number of $\mathcal{R}$-DID forests on $[2n-1]$ whose trees have nodes sets equal to the blocks of $\pi$. Consequently,
$  -\mu_ {\Pi_{\Gamma_{2n-1}^{m}}}(\hat 0, \hat 1) $ is equal to the number of $\mathcal{R}$-DID trees on $[2n-1]$ and $$ \chi_{\Pi_{\Gamma_{2n-1}^{m}}}(t) = \sum_{k=1}^{2n-1} (-1)^{k} |\mathcal F_{2n-1,k}| t^{k-1},$$
where $\mathcal F_{2n-1,k}$ is the set of $\mathcal{R}$-DID forests on $[2n-1]$ with exactly $k$ trees.
\end{theorem}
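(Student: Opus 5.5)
The plan is to run the non-broken-circuit machinery on the bond lattice $\Pi_{\Gamma_{2n-1}^{m}}$, following Lazar and Wachs. I take $\Gamma_{2n-1}^{m}$ to be the graph on $[2n-1]$ from Section~\ref{sec3}: the bipartite graph with edges $\{i,j\}$, $i<j$, $i$ odd and $j$ even, together with all edges among the odd vertices of $\mathcal{R}=\{1,3,\ldots,m\}$ (the clique).

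The first step is to recall the standard fact (Rota; Blass--Sagan) for the bond lattice of any graph $\Gamma$ with a fixed total order on its edges. For every $\pi\in\Pi_\Gamma$, $(-1)^{n-|\pi|}\mu(\hat0,\pi)$ equals the number of NBC spanning forests of $\Gamma$ whose trees have vertex sets exactly the blocks of $\pi$. Here the rank of $\pi$ is $n-|\pi|$, and the sign normalisation only differs from $(-1)^{|\pi|}$ by the fixed sign $(-1)^{n}$ with $n$ odd. I will match it against the sign convention of the statement at the end.

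Once the first claim is established, the two consequences are immediate. Taking $\pi=\hat1$ gives the count of $\mathcal{R}$-DID trees. Summing over $\pi$ with $|\pi|=k$ and using $\rk(\pi)=2n-1-|\pi|$ yields $\chi(t)=\sum_k(-1)^k|\mathcal F_{2n-1,k}|t^{k-1}$.

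The heart of the proof is choosing an edge order for which the NBC forests are exactly the $\mathcal{R}$-DID forests. I would use lexicographic order on edges written as $(i,j)$ with $i<j$, possibly refined so that clique edges interact correctly with the bipartite edges. The proof then determines the minimal edges of every cycle.

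For the bipartite part, cycles alternate odd and even vertices. As in Lazar--Wachs, a forest avoids broken circuits iff each path satisfies the increasing-decreasing condition: for each vertex, the neighbour pattern forbids an odd $a$ adjacent to even $b<c$, where $c$ connects to a smaller odd vertex.

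For the clique part, the analysis has to account for cycles using clique edges. These are triangles $\{a,b,c\}\subseteq\mathcal{R}$, where the broken circuit is the pair of edges avoiding the minimum edge, giving an increasing-path condition as in increasing trees. There are also mixed cycles $a - e - b$ with $a,b\in\mathcal{R}$, $e$ even, $e>\max(a,b)$, closed by the edge $ab$; their broken circuits give the deformed condition.

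I would then check against the definition of $\mathcal{R}$-DID forests in Section~\ref{sec4} that broken-circuit-freeness reduces to the local conditions listed there. The argument has two directions. First, the forbidden local configurations are themselves broken circuits. Second, any broken circuit in a forest contains one of these local patterns; for this I would take a minimal cycle, look at its smallest vertex and extremal edges, and induct on cycle length.

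The main obstacle is this second direction for long mixed cycles that alternate between clique edges and bipartite paths. I would handle it by showing that any such cycle can be shortcut through the clique to a shorter cycle with the same minimal edge. This uses the hypothesis that $\mathcal{R}$ consists of the smallest odd numbers, which guarantees that clique edges precede all relevant bipartite edges. If the plain lex order fails, I would instead order clique edges first and re-derive the definition accordingly.
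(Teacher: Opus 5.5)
Your outline matches the paper's: show that, under a suitable total order on the edges of $\Gamma_{2n-1}^{m}$, the NBC forests are exactly the $\mathcal{R}$-DID forests, reduce to single trees, and prove both inclusions. But the substance of the proof is the choice of that order, and you never fix it.

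\textbf{The default you name fails.} Plain lexicographic order on $(i,j)$, $i<j$, already fails on the bipartite edges, before any clique edge is involved. Take $m=1$ (so $\mathcal{R}$-DID trees are the Lazar--Wachs ID trees), $n\ge 4$, and the block $\{1,3,4,6\}$, which induces the $4$-cycle $1,4,3,6$.
\begin{itemize}
\item The tree with edges $\{1,6\},\{3,6\},\{3,4\}$ is, rooted at $1$, the chain $1,6,3,4$, and it is $\mathcal{R}$-DID. Yet in plain lex order $\{1,4\}$ is the least edge of that cycle, so this tree is itself a broken circuit.
\item Conversely, the chain $1,4,3,6$ is NBC but not $\mathcal{R}$-DID.
\item For other $m$, any odd $a<b$ outside $\mathcal{R}$ and even $b<c<d$ give the same failure.
\end{itemize}
Your bipartite pattern (odd $a$ with even neighbours $b<c$, where $c$ has an odd neighbour smaller than $a$) is exactly the plain-lex broken circuit, and it occurs in this ID tree. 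So the ``iff'' you assert is false. Putting clique edges first does not touch this. ``Re-deriving the definition'' is not available either, since the statement fixes what an $\mathcal{R}$-DID forest is.

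\textbf{The order that works} is the one in the paper's lemma. Order edges lexicographically by the smaller endpoint $a$, which is always odd. For fixed $a$, list first the clique partners in increasing order, then the even partners in \emph{decreasing} order.
\begin{itemize}
\item Decreasing evens is what gives $\{u,y\}\prec\{u,x\}$ on a path $u,x,v,y$ in which the even vertex $x$ has a larger grandchild $y$. It also makes the edge from a common ancestor $h=2i-1$ to an even child $c>2j$ precede $\{2i-1,2j\}$.
\item Listing clique partners before even ones makes the clique edge least in the triangle $u,x,y$ ($u,y\in\mathcal{R}$, $x$ even). It does the same in the cycle closed from the youngest $\mathcal{R}$-ancestor of $y$. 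This is what forbids even vertices from having children in $\mathcal{R}$.
\end{itemize}
With this order you need no induction on cycle length and no shortcutting through the clique.
\begin{itemize}
\item For a non-tree edge $e$, an edge of the created cycle incident to the youngest common ancestor of the endpoints of $e$ already precedes $e$.
\item The $\mathcal{R}$-vertices of an $\mathcal{R}$-DID tree form a subtree containing the root, so a clique edge closes a cycle lying inside $\mathcal{R}$. Your ``long mixed cycle'' worry does not arise.
\item Conversely, each local violation of the $\mathcal{R}$-DID conditions yields an explicit broken circuit of the shapes above.
\end{itemize}

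\textbf{The sign cannot be ``matched at the end''.} On the odd ground set $[2n-1]$ one has $(-1)^{\rk(\pi)}=-(-1)^{|\pi|}$. So NBC counting gives $(-1)^{|\pi|-1}\mu(\hat0,\pi)$, and the coefficients of $\chi$ carry $(-1)^{k-1}$. For example, with $n=2$, $m=3$, $\Gamma$ is the path with edges $\{1,2\},\{1,3\}$, $\chi(t)=(t-1)^2$, $\mu(\hat0,\hat1)=+1$, and there is exactly one $\mathcal{R}$-DID tree. The printed signs are off, and you should say so rather than assert the displayed formula. The exponent $t^{k-1}$ also assumes $\Gamma_{2n-1}^{m}$ is connected. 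That fails for $m<2n-1$, since vertex $2n-1$ is then isolated.
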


We then define a corresponding class of Dumont-like permutations, called $\RR$-D-permutations, and construct bijections between these objects. As a consequence of Theorem \ref{thm: char-poly-DID}, we obtain the following combinatorial interpretations for the characteristic polynomial and its coefficients. 

\begin{theorem} \label{them: coeff to cycles}
Let $A$ be a finite subset of $\Z_{>0}$.  For  all $\pi \in  \Pi_{\Gamma_{A}^{\RR}}$,
$$(-1)^{|\pi|}\mu_{\Pi_{\Gamma_{A}}^{\RR}}(\hat{0},\pi) = |\{\sigma \in \mathcal{D}_{A}^{\RR}: \cyc(\sigma) = \pi\} |.$$ 
Consequently, \begin{equation} \label{chardumeq}  \chi_{\mathcal L\left(\Pi_{\Gamma_A}^{\RR}\right)}(t) = 
\sum_{k=1}^{2n} s_{\RR D}(A,k) t^{k-1},\end{equation}
where $(-1)^{k}s_{\RR D}(A,k)$ is equal to  the number of $\RR$-D-permutations on $A$ with exactly $k$ cycles.
\end{theorem}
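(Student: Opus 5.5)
The plan is to deduce the statement from Theorem~\ref{thm: char-poly-DID} by a bijection between $\RR$-DID forests and $\RR$-D-permutations that sends a forest with tree vertex sets $B_1,\dots,B_k$ to a permutation whose cycles have supports exactly $B_1,\dots,B_k$. Once such a block-preserving bijection $\Phi$ exists, the first claim follows immediately. Theorem~\ref{thm: char-poly-DID}, transported from $[2n-1]$ to an arbitrary finite $A\subset\Z_{>0}$ through the order-preserving relabelling that fixes the parity/$\RR$ data, says $(-1)^{|\pi|}\mu(\hat0,\pi)$ counts $\RR$-DID forests whose trees have vertex sets equal to the blocks of $\pi$. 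The $\RR$-D-permutations with $\cyc(\sigma)=\pi$ are then equinumerous with these forests. The formula \eqref{chardumeq} follows by summing over all $\pi$ with $|\pi|=k$ in the definition of $\chi$. Here the rank of $\pi$ in the bond lattice is $|A|-|\pi|$, so the exponent is $|\pi|-1=k-1$.

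First I reduce to a single block. Both the $\RR$-DID condition on a forest and the $\RR$-D condition on a permutation are local: each tree is a DID tree on its own vertex set, and each cycle satisfies the $\RR$-D inequalities $i\le\sigma(i)$ or $i\ge\sigma(i)$, with the $\RR$-deformation, pointwise. So it suffices to construct, for every finite $B\subseteq A$ that is a block of some $\pi\in\Pi_{\Gamma_A^\RR}$, a bijection between $\RR$-DID trees on $B$ and cyclic $\RR$-D-permutations of $B$. The general $\Phi$ is then the product over blocks. This also matches the parity/connectivity description of the bond-lattice blocks from Section~\ref{sec3}: a block admits a tree exactly when it is connected in $\Gamma$, which should be exactly when it admits a cyclic $\RR$-D-permutation.

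For the tree-to-cycle bijection I would adapt the Lazar--Wachs construction for DID trees and D-cycles. Root the tree at its distinguished vertex (the maximum or minimum, as the DID definition specifies). Read the children of each node in the order forced by the increasing/decreasing rule, and produce a cycle by a depth-first traversal in which odd (``increasing'') vertices are followed by larger values and even (``decreasing'') vertices by smaller values. Vertices of $\RR$, i.e.\ the clique vertices, are allowed either type of edge in the forest. They should correspondingly be allowed either inequality in the permutation. I will check that the traversal respects this relaxed rule, and construct the inverse by cutting the cycle at its extremal element and recursively decomposing into left-to-right maxima/minima runs.

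The main obstacle is this single-block bijection at vertices of $\RR$. There the increasing/decreasing dichotomy breaks down, so the canonical child ordering that makes the Lazar--Wachs map invertible may become ambiguous. I would first try to fix a convention that treats each $r\in\RR$ as ``odd-type'' whenever its parent is smaller and ``even-type'' otherwise, and then verify injectivity by induction on $|B|$, deleting the extremal element. If this fails, a fallback is a counting argument: show that both sides satisfy the same recursion obtained by removing the largest element of $B$. This would give equinumerosity per block, which is all the statement needs.
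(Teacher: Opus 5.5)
Your top-level reduction is the same as the paper's. The paper obtains this statement by combining Theorem~\ref{thm: char-poly-DID} (NBC forests are exactly the $\RR$-DID forests) with a block-preserving bijection between $\RR$-DID trees and $\RR$-D-cycles (Theorem~\ref{them:R-deformedBij} and Corollary~\ref{Cor:R-deformedBij}). That tree-to-cycle bijection is the only new content here, and your proposal leaves it unconstructed precisely where it departs from Lazar--Wachs.

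Your diagnosis of the difficulty also rests on a misreading of the definition: on the tree side nothing ``breaks down'' at $\RR$. In an $\RR$-DID tree rooted at its minimum, every vertex of $\RR$ is smaller than all of its descendants. Every non-root vertex of $\RR$ has a smaller element of $\RR$ as its parent, because children of even vertices are odd vertices outside $\RR$, and children of odd vertices outside $\RR$ are even. So your convention of treating $r\in\RR$ as even-type when its parent is larger never applies.

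It also misses the real issue: how to order the children of an $\RR$-vertex, which may mix $\RR$-vertices and even vertices. That choice is decisive. With $\RR=\{1,3,5\}$, if the children of an $\RR$-vertex are listed in decreasing order, the tree with edges $\{1,3\},\{1,5\}$ and the tree with edges $\{1,3\},\{3,5\}$ both have postorder word $531$, so injectivity fails. Your fallback (a common recursion obtained by deleting the largest element) is not specified, so it does not close the gap either.

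The missing idea, as in the paper, has two parts.
\begin{enumerate}
\item In $\hat T$, list the $\RR$-children of an $\RR$-vertex first, in increasing order, followed by its even children in decreasing order. Then send $T$ to the cycle $(\pw(\hat T))$.
\item For the inverse, take a word $w$ ending in its minimum $r\in\RR$ and cut $w'$ immediately after its last $\RR$-letter. Since subtrees of even vertices contain no $\RR$-letters, that letter is the last $\RR$-child. Read the $\RR$-children as the right-to-left minima of the prefix, the even children as the right-to-left maxima of the suffix, and recurse on the segments.
\end{enumerate}
This is where the hypothesis that $\RR=\{1,3,\dots,m\}\cap A$ is an initial segment of the odd elements is used. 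Even letters are always followed by smaller letters, so right-to-left minima of the prefix are odd and lie below an $\RR$-letter, hence lie in $\RR$. Likewise, a segment ending in an odd non-$\RR$ minimum cannot contain any $\RR$-letter.

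Two smaller points. First, there is in general no order- and parity-preserving relabelling of $A$ onto $[2n-1]$ (take $A=\{1,3,5\}$). Instead, note that $\Gamma^m_A$ is an induced subgraph of $\Gamma^m_{[N]}$, and its NBC sets are exactly the ambient NBC sets it contains; or simply observe that the NBC argument of Theorem~\ref{thm: char-poly-DID} is local.

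Second, NBC theory gives $(-1)^{\rk\pi}\mu(\hat0,\pi)=(-1)^{|A|-|\pi|}\mu(\hat0,\pi)$, which agrees with $(-1)^{|\pi|}\mu(\hat0,\pi)$ only when $|A|$ is even. For $A=[3]$ and $\RR=\{1,3\}$ one has $\mu(\hat0,\hat1)=1$ and exactly one $\RR$-D-cycle, $(1\,3\,2)$. So the sign, both here and in Theorem~\ref{thm: char-poly-DID} as stated, should be read as $(-1)^{|A|-|\pi|}$.
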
 

In Section \ref{sec5}, we obtain the characteristic polynomial of the hyperplane arrangement $\lin_{2n-3}(K_n)$ and give another combinatorial interpretation of the characteristic polynomial in terms of the cycles of pairs of permutations. 

In Section \ref{Sec6}, we define and study refinements of the permutation sets $\mathcal{D}_{2n+1,0}$ and $\mathcal{D}_{2n+1,n+1}$ with respect to the starting letter. We give a simple bijection between the D-permutations on $[2n]$ (the permutations in $\mathcal{D}_{2n+1,n+1}$ with last letter removed) defined by Lazar and Wachs \cite{lazar-wachs-hom-linial} and the Dumont derangements on $[2n+2]$. This lets us use an existing recurrence for the Dumont derangements to enumerate the D-permutations with fixed starting letter. The problem of enumerating the number of regions for other choices of $\RR$ is still open.

In Section \ref{sec: conj}, we translate the set $\mathcal{D}_{n,0}$ bijectively into the set of permutations whose descent tops are all odd. These permutations have been studied before by Kitaev and Remmel \cite{kitaev-remmel-parity}.
The number of descents of $p\in \SSS_n$, $\des(p)$, is the cardinality of the set $\DES(p):=\{i\in[n-1]: p_i>p_{i+1}\}$. For an index $i\in \DES(p)$, we call $p_i$ the descent top and $p_{i+1}$ the descent bottom. Similarly, we can define the ascent statistic, denoted by $\asc$. The number of ascents of $p\in \SSS_n$, $\asc(p)$, is the cardinality of the set $\ASC(p):=\{i\in[n-1]:p_i<p_{i+1}\}$. 
Define the following permutation statistics. 
\begin{enumerate}
    \item $S_{10}(p):=|\{i\in[n-1]: p_i>p_{i+1} \mbox{ and }  p_{i}\in \mathbb{O}\}|$,
    \item $S_{12}(p):=|\{i\in[n-1]:p_{i},p_{i+1}\in \mathbb{O}\}|$,
    \item $S_{17}(p)$ is the largest $i$ such that the elements $1,\dots ,i$ appear from left to right in $p$.
\end{enumerate}
 Therefore, the set of permutations whose descent tops are all odd are the permutations where $\mathrm{des}(\pi)=S_{10}(\pi)$ where $\mathrm{des}(\pi)$ is the number of descents in $\pi$.

Deutsch, Kitaev and Remmel \cite[Conjecture $10$]{equi-desc-ap-pv-deutsch-kitaev} conjectured that the the triples $(S_{10},S_{12}, S_{17})$ and $(S_{12},S_{10},S_{17})$ are equidistributed. In fact, they give a refined conjecture based on the following $3$ permutation statistics.
\begin{enumerate}
    \item $T_1(p):=|\{ i\in[n-1]: p_i>p_{i+1},\ p_i\in \mathbb{O}, \ p_{i+1}\in\mathbb{E} \}|,$ 
    \item $T_2(p):=|\{ i\in[n-1]: p_i<p_{i+1} \mbox{ and } p_i,p_{i+1}\in \mathbb{O} \}|,$
    \item $T_3(p):=|\{ i\in[n-1]: p_i>p_{i+1} \mbox{ and } p_i,p_{i+1}\in \mathbb{O} \}|,$
\end{enumerate}

We give an involution that takes $T_1$ to $T_2$ and $T_2$ to $T_1$ while fixing $T_3, S_{17}$, which results in the following.

\begin{theorem}{\cite[Conjecture $11$]{equi-desc-ap-pv-deutsch-kitaev}}\label{thm: main}
    The quadruples $(T_1,T_2,T_3,S_{17})$ and $(T_2,T_1,T_3,S_{17})$ are equidistributed over $\SSS_n$ for all $n\in \N$.
\end{theorem}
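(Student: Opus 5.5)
The plan is to build an explicit involution $\Phi$ on $\SSS_n$ with $T_1(\Phi(p))=T_2(p)$, $T_2(\Phi(p))=T_1(p)$, $T_3(\Phi(p))=T_3(p)$ and $S_{17}(\Phi(p))=S_{17}(p)$. Equidistribution of the two quadruples then follows immediately.

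\textbf{Step 1: localise the statistics.} All three statistics $T_1,T_2,T_3$ only look at adjacent pairs $p_ip_{i+1}$ whose first letter is odd. So I cut $p$ after every even letter. Then $p=B_1e_1B_2e_2\cdots B_re_r B_{r+1}$, where each $B_j$ is a (possibly empty) word in odd letters, each $e_j$ is even, and $B_{r+1}$ may be the final all-odd tail. The pairs $e_jx$ contribute to none of the statistics.

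Now split each $B_j$ into maximal decreasing runs $D_{j,1}D_{j,2}\cdots D_{j,k_j}$. Then:
\begin{itemize}
\item $T_3$ is $\sum_j \sum_\ell (|D_{j,\ell}|-1)$;
\item $T_2$ is $\sum_j (k_j-1)$, summed over nonempty $B_j$;
\item $T_1$ is the number of $j\le r$ with $B_j$ nonempty and $\max$-free last letter of $B_j$ exceeding $e_j$, i.e.\ the last odd run ``continues to descend'' into $e_j$.
\end{itemize}
Thus a $T_2$-event is a boundary between two decreasing odd runs inside one odd block. A $T_1$-event is a decreasing odd run whose descent continues into the following even letter. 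In both cases it is a ``gap'' attached to the end of a decreasing odd run.

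\textbf{Step 2: define the involution on gaps.} I would regard each maximal decreasing odd run as a unit carrying a marker describing what follows it: an odd ascent ($T_2$), a descent to an even letter ($T_1$), or a neutral ending (an ascent to an even letter, or the end of the word). The goal is to rearrange, keeping the set of odd decreasing runs and their internal order, so that the roles ``odd ascent'' and ``descent to even'' are exchanged. This must leave the internal structure of runs, hence $T_3$, unchanged.

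Concretely, I would try a ``valley-hopping''-type operation, modelled on the Foata--Strehl action, where even letters play the role of hopping letters. Process the even letters in a fixed order; processing them from largest to smallest should give commuting, involutive moves. For an even letter $e$ that immediately follows a decreasing odd run ending in $o>e$, move $e$ to the position between two adjacent decreasing odd runs forming an odd ascent, or conversely. The destination is chosen canonically, e.g.\ the nearest admissible slot to the right determined by comparing $e$ with the neighbouring odd letters. Each move converts one $T_1$ event into one $T_2$ event or vice versa, and leaves odd--odd descents untouched.

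\textbf{Step 3: check $S_{17}$ and involutivity.} $S_{17}$ depends only on the relative left-to-right order of $1,2,\dots$. So I would make the hopping moves never swap the relative order of $1,\dots,S_{17}(p)+1$. If needed, I would freeze the letters $1,\dots,S_{17}(p)$ and apply the construction only within segments not crossing them, or restrict hops of $2$ and small evens accordingly.

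\textbf{Main obstacle.} The main obstacle is Step 2: making the hop well defined and genuinely involutive. Moving an even letter changes which odd blocks are adjacent, so it can merge or split decreasing odd runs and thereby alter $T_3$. My first attempt would be to hop each even letter only within the window bounded by larger even letters, since processing from largest down fixes those windows. Within such a window I would pair the $T_1$-slots and $T_2$-slots by a left-to-right matching argument, in the style of a parenthesis matching, and verify directly that odd-run structure and the relative order of small letters are preserved.
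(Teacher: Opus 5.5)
The proposal does not prove the theorem. Your Step 1 bookkeeping is fine, but the involution of Step 2 is never defined, and the one concrete move you describe goes in the wrong direction.

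Suppose you hop an even letter $e$ out of a $T_1$-slot $o\,e\,x$ (with $o>e$) into an odd ascent $o_1<o_2$. You destroy the $T_1$-pair $o\,e$ \emph{and} the $T_2$-pair $o_1o_2$. The new pair $o_1e$ is a $T_1$ only if $o_1>e$ and is never a $T_2$, and $e\,o_2$ is neutral. The vacated slot creates the pair $o\,x$, which may be a $T_1$, $T_2$, $T_3$ or nothing, depending on $x$. So the hop changes $T_2$ by $-1$, or by $0$ when $x$ is odd with $x>o$. It can never raise $T_2$, so it cannot convert a $T_1$ into a $T_2$, and it can create a $T_3$. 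For instance, $32154\mapsto 31254$ (moving $2$ into the ascent $1<5$) sends $(T_1,T_2,T_3,S_{17})$ from $(2,1,0,1)$ to $(1,0,1,2)$ instead of $(1,2,0,1)$.

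The underlying difficulty is that moving a non-maximal letter has effects that depend on value comparisons with its old and new neighbours. Your ``canonical destination'' and ``parenthesis matching'' do not control these comparisons. Nothing in Steps 2--3 shows the moves are well defined, commute, are involutive, or preserve $T_3$ and $S_{17}$. Freezing small letters to protect $S_{17}$ would further break whatever pairing of slots you intended.

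The idea you are missing, which the paper uses, is to only ever insert the \emph{maximum} letter. Then every comparison with the new letter is forced, and its effect depends only on the local type of the gap. The paper builds $f_{n+1}$ from $f_n$ by inserting $n+1$ into both $p$ and $p'=f_n(p)$.
\begin{itemize}
\item For odd $n+1$: an $\mathcal{A}$-gap adds one $T_1$; a $\mathcal{B}$-gap adds one $T_2$; a $\mathcal{C}$-gap adds one $T_3$; a $\mathcal{D}$-gap adds one $T_1$ and one $T_2$; an $\mathcal{E}$-gap is neutral. The $k$-th $\mathcal{A}$-gap of $p$ is matched with the $k$-th $\mathcal{B}$-gap of $p'$ and vice versa, and other types are matched with themselves.
\item For even $n+1$: inserting into a $T_i$-gap removes one $T_i$, and all other gaps are neutral. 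So the $k$-th $T_1$-gap is matched with the $k$-th $T_2$-gap.
\end{itemize}
The auxiliary relations $\mathcal{A}(p)=\mathcal{B}(p')$ (or $\mathcal{B}(p')-1$, depending on the parity of $n$), $\mathcal{C}(p)=\mathcal{C}(p')$, and so on, are carried through the induction precisely so that each matching is between sets of equal size. $S_{17}$ then costs nothing. Inserting $n+1$ never changes the relative order of $1,\dots,n$, so it matters only when $n+1$ is appended to the identity, and the construction fixes the identity.
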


This also gives a proof of \cite[Conjecture $10$]{equi-desc-ap-pv-deutsch-kitaev} which we state as a corollary below.
\begin{corollary}{\cite[Conjecture $10$]{equi-desc-ap-pv-deutsch-kitaev}}\label{cor: S10_S12}
    The triples of statistics $(S_{10}, S_{12}, S_{17})$ and $(S_{12}, S_{10}, S_{17})$ are equidistributed over $\SSS_n$ for all $n \in \N$.
\end{corollary}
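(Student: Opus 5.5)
The corollary should follow from Theorem~\ref{thm: main} by writing $S_{10}$ and $S_{12}$ as sums of the refined statistics $T_1,T_2,T_3$ plus one further statistic that both sides share. I first sort the adjacent pairs $(p_i,p_{i+1})$ by parity and by ascent/descent. Since $S_{10}$ counts descents with odd top, I split by the parity of the bottom:
\[
S_{10}=T_1+T_3 .
\]
Since $S_{12}$ counts pairs with both entries odd, I split by ascent or descent:
\[
S_{12}=T_2+T_3 .
\]
Both identities only use that $p_i\neq p_{i+1}$, so every adjacent pair is exactly one of an ascent or a descent.

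With these identities, define $F(a,b,c,d)=(a+c,\,b+c,\,d)$. Then
\[
(S_{10},S_{12},S_{17})=F(T_1,T_2,T_3,S_{17}),
\qquad
(S_{12},S_{10},S_{17})=F(T_2,T_1,T_3,S_{17}),
\]
where the second equality holds because $F(T_2,T_1,T_3,S_{17})=(T_2+T_3,\,T_1+T_3,\,S_{17})$. By Theorem~\ref{thm: main}, the quadruples $(T_1,T_2,T_3,S_{17})$ and $(T_2,T_1,T_3,S_{17})$ are equidistributed over $\SSS_n$. Applying the same map $F$ to equidistributed vectors gives equidistributed images. Concretely, if $\phi$ is a bijection (such as the paper's involution) carrying the first quadruple to the second, then $\phi$ also carries $(S_{10},S_{12},S_{17})$ to $(S_{12},S_{10},S_{17})$.

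No step here is a real obstacle; the only thing to verify is the two decompositions above. All the difficulty lies in Theorem~\ref{thm: main}, that is, in constructing the involution that swaps $T_1$ and $T_2$ while fixing $T_3$ and $S_{17}$.
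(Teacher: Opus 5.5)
Your proposal is correct and is essentially the paper's own proof: the paper likewise observes $S_{10}=T_1+T_3$ and $S_{12}=T_2+T_3$ and concludes directly from the equidistribution of $(T_1,T_2,T_3,S_{17})$ and $(T_2,T_1,T_3,S_{17})$ in Theorem~\ref{thm: main}. Your map $F(a,b,c,d)=(a+c,\,b+c,\,d)$ simply makes explicit the step the paper calls immediate.
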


\section{Preliminaries}\label{sec:pre}

\subsection{Hyperplane Arrangements}
Let $k$ be a field, where $k=\mathbb{R}$ or $\mathbb{C}$. A {\em hyperplane arrangement} $\mathcal{A}\subseteq k^n$ is a finite collection of affine subspaces of codimension $1$ in $k^n$.

The set of all nonempty intersections of hyperplanes in $\mathcal{A}$, ordered by reverse inclusion, is called the {\em intersection poset} of $\mathcal{A}$, and is denoted by $\mathcal{L}(\mathcal{A})$. The ambient space $k^n$ itself is included as an element of $\mathcal{L}(\mathcal{A})$, viewed as the intersection over the empty collection of hyperplanes. When $\bigcap_{H\in \mathcal{A}} H \neq \varnothing$,
the intersection poset is a geometric lattice; otherwise, it is a geometric semilattice.

Now suppose that $\mathcal{A}$ is a real hyperplane arrangement in $\mathbb{R}^n$. In this case, the complement $\mathbb{R}^n \setminus \mathcal{A}$
is disconnected. The number of connected components of this complement is called the {\em number of regions} of the arrangement and is denoted by $r(\mathcal{A})$. Remarkably, this quantity depends only on the intersection poset $\mathcal{L}(\mathcal{A})$, as expressed by Zaslavsky's formula (\ref{Zaslavsky-region-formula}).

\subsection{Bond lattice of a graph}

In this subsection, we refer to Lazar and Wachs \cite{lazar-wachs-hom-linial}. We recall some of the definitions required.

Let $G=(V,E)$ be a graph. Given a subset $B$ of $V$, let $G\big|_B$ denote the induced subgraph of $G$ with vertex set $B$. Let $\Pi_V$ denote the lattice of partitions of the set $V$ ordered as usual by reverse refinement. The \emph{bond lattice of $G$} is the induced subposet $\Pi_G$ of $\Pi_V$ consisting of partitions $\pi=B_1\,|\dots|\,B_k$ such that $G\big|_{B_i}$ is connected for all $i$.

\begin{figure}[H]
\centering

\begin{subfigure}{0.42\textwidth}
\centering
\begin{tikzpicture}[scale=0.7]
\tikzstyle{node} = [circle, draw, fill=blue!20, minimum size=1mm]
    \node[node] (node1) at (0,0) {2};
    \node[node] (node2) at (2,2) {3};
    \node[node] (node3) at (4,0) {4};
    \node[node] (node4) at (2,-2) {1};
    \draw (node1)--(node2);
    \draw (node1)--(node3);
    \draw (node2)--(node3);
    \draw (node2)--(node4);
\end{tikzpicture}
\caption{$G$}
\end{subfigure}
\hspace{-0.4cm}
\begin{subfigure}{0.42\textwidth}
\centering
\begin{tikzpicture}[scale=0.6]
  \node (one) at (0,4.5) {$1234$};
  \node (a1) at (-4,2) {$123|4$};
  \node (a2) at (-2,2) {$13|24$};
  \node (a3) at (2,2) {$1|234$};
  \node (a4) at (4,2) {$134|2$};
  \node (a) at (-4,-1) {$1|2|34$};
  \node (b) at (-2,-1) {$1|23|4$};
  \node (c) at (2,-1) {$13|2|4$};
  \node (d) at (4,-1) {$1|3|24$};
  \node (zero) at (0,-4) {$1|2|3|4$};
  \draw (a) -- (a3);
  \draw (a) -- (a4);
  \draw (b) -- (a1);
  \draw (b) -- (a3);
  \draw (c) -- (a1);
  \draw (c) -- (a2);
  \draw (c) -- (a4);
  \draw (d) -- (a2);
  \draw (d) -- (a3);
  \draw (one) -- (a1);
  \draw (one) -- (a2);
  \draw (one) -- (a3);
  \draw (one) -- (a4);
  \draw (zero) -- (a);
  \draw (zero) -- (b);
  \draw (zero) -- (c);
  \draw (zero) -- (d);
\end{tikzpicture}
\caption{$\Pi_G$}
\end{subfigure}
\caption{A graph $G$ and its bond lattice $\Pi_G$.}\label{fig:bond lattice}
\end{figure}
Whitney's formula \cite{whitney-logical-exp} for the chromatic polynomial $\ch_G(t)$ of $G$ is given by 
\begin{equation}\label{bond-lattice-chromatic-poly}
    \ch_G(t)=t\chi_{\Pi_G}(t).
\end{equation}
To compute the M{\"o}bius function of the bond lattice of a graph $G=(V,E)$, we fixed a total ordering on $E$. A subset $S\subset E$ is called a \emph{broken circuit} if it is the edge set of a cycle of $G$ with its smallest edge (with respect to this ordering) removed. If $S$ does not contain a broken circuit, we say that $S$ is a \emph{non-broken circuit set} or \emph{$\nbc$} set. The graph $(V,S)$ is necessarily a forest and we call $(V,S)$ an \emph{$\nbc$ forest} of $G$.

Given any $S\subset E$, let $\pi_S$ be the partition of $V$ whose blocks are the vertex sets of the connected components of the graph $(V,S)$. For $\pi\in \Pi_G$, 
\begin{equation}
    (-1)^{\rk(\pi)}\mu(\hat 0, \pi)=\# \{ \nbc \mbox{ forests } (V,S) \mbox{ of } G: \pi_S=\pi \}.
\end{equation}

\subsection{Deformed increasing-decreasing trees}


We recall the definition of an increasing-decreasing ($\ID$) tree defined by Lazar and Wachs \cite{lazar-wachs-hom-linial}.  Let $T$ be a tree whose nodes are in $\mathbb{Z}_{>0}$. We say that $T$ is \emph{increasing-decreasing} if, when $T$ is rooted at its largest vertex (or smallest vertex), each internal vertex $v$ satisfies
\begin{enumerate}
    \item if $v$ is odd, then $v$ is less than all its descendants and all its children are even,
    \item if $v$ is even, then $v$ is greater than all its descendants and all its children are odd.
\end{enumerate}
An \emph{increasing-decreasing} (\emph{$\ID$}) forest is a forest whose components are all increasing-decreasing trees.
We now introduce a variant, called the $\mathcal{R}$-deformed increasing-decreasing ($\mathcal{R}$-DID) tree. Let $\mathcal{R}:=\{1,3,\dots, 2j-1\}\cap V(T)$ for some $j\in \mathbb{N}$.
A tree $T$ is said to be a \emph{$\mathcal{R}$-deformed increasing-decreasing tree} if, when $T$ is rooted at its smallest vertex, each internal vertex $v$ satisfies:
\begin{enumerate}
    \item if $v$ is odd and belongs to $\mathcal{R}$, then $v$ is less than all of its descendants, and each child of $v$ is either even or an odd vertex contained in $\mathcal{R}$,
    \item if $v$ is odd and does not belong to $\mathcal{R}$, then $v$ is less than all of its descendants, and all of its children are even,
    \item if $v$ is even, then $v$ is greater than all of its descendants, and all of its children are odd nodes not contained in $\mathcal{R}$.
\end{enumerate}
An \emph{$\mathcal{R}$-deformed increasing-decreasing forest} is a forest whose components are all $\mathcal{R}$-deformed increasing-decreasing trees. An example of $\mathcal{R}$-DID forest on $[12]$, where $\mathcal{R}=\lbrace{1,3,5,7}\rbrace$ is given in the figure below.

\begin{figure}[h]
\centering

\begin{tikzpicture}[scale=0.7]
\tikzstyle{node} = [circle, draw, fill=blue!20, minimum size=1mm]
    \node[node] (node1) at (0,0) {2};
    \node[node] (node2) at (2,0) {3};
    \node[node] (node3) at (2,2) {1};
    \node[node] (node4) at (1,-2) {4};
    \node[node] (node5) at (3,-2) {5};
    \node[node] (node6) at (4,0) {6};

    \draw (node1)--(node3);
    \draw (node3)--(node6);
    \draw (node2)--(node3);
    \draw (node2)--(node4);
    \draw (node2)--(node5);

    \node[node] (node7) at (6,0) {8};
    \node[node] (node8) at (8,0) {10};
    \node[node] (node9) at (7,2) {7};

    \draw (node7)--(node9);
    \draw (node8)--(node9);

    \node[node] (node10) at (10,0) {12};
    \node[node] (node11) at (10,2) {9};

    \draw (node10)--(node11);

    \node[node] (node12) at (12,2) {11};
\end{tikzpicture}
\caption{An $\mathcal{R}$-DID forest on $12$ nodes.}\label{R-DID forest}
\end{figure}

\section{Intersection lattices of $\lin_{2n-3}(G)$}\label{sec3}
In this section, we show that for any graph $G$, $\mathcal{L}(\mathcal{H}_{2n-3}(G))$ is isomorphic to the bond lattice of a certain graph. 

Let $S_1\coloneqq \{1,3,\dots, 2n-1\}$ and $S_2\coloneqq \{2,4,\dots, 2n-2\}$. Define $\Gamma_G$ to be the graph with the following edges on the vertex set $S_1\sqcup S_2$. For $1\le i,j\le n$, $(i,j)$ is an edge in $G$ if and only if $(2n-2i+1,2n-2j+1)$ is an edge in $\Gamma_G$. Also, $(2i-1,2j)\in E(\Gamma_G)$ whenever $1\le i\le j\le n-1$.

A graph $G$ on $[5]$ and its $\Gamma_G$ is shown below.

\begin{figure}[H]
\centering
\begin{tikzpicture}[scale=0.8]
\tikzstyle{node} = [circle, draw, fill=blue!20, minimum size=1mm]

    \node[node] (node1) at (2.5,0) {2};
    \node[node] (node2) at (5.5,0) {5};
    \node[node] (node3) at (4,1) {1};
    \node[node] (node4) at (3,-2) {3};
    \node[node] (node5) at (5,-2) {4};

    \draw (node1)--(node2);
    \draw (node1)--(node3);
    \draw (node2)--(node3);
    \draw (node1)--(node4);
    \draw (node4)--(node5);
    \draw (node2)--(node5);

    \node at (4, -3.5) {(a) $G$};

    \node[node] (r_node1) at (8.5,0) {7};
    \node[node] (r_node2) at (11.5,0) {1};
    \node[node] (r_node3) at (10,1) {9};
    \node[node] (r_node4) at (9,-2) {5};
    \node[node] (r_node5) at (11,-2) {3};

    \node[node] (r_node6) at (15,0.5) {2};
    \node[node] (r_node7) at (15,-0.5) {4};
    \node[node] (r_node8) at (15,-1.5) {6};
    \node[node] (r_node9) at (15,-2.5) {8};

    \draw (r_node1)--(r_node2);
    \draw (r_node2)--(r_node5);
    \draw (r_node2)--(r_node3);
    \draw (r_node1)--(r_node3);
    \draw (r_node4)--(r_node5);
    \draw (r_node1)--(r_node4);

    \draw (r_node2)--(r_node6);
    \draw (r_node2)--(r_node7);
    \draw (r_node2)--(r_node8);
    \draw (r_node2)--(r_node9);
    \draw (r_node5)--(r_node7);
    \draw (r_node5)--(r_node8);
    \draw (r_node5)--(r_node9);
    \draw (r_node4) to [bend right=25] (r_node8);
    \draw (r_node4) to [bend right=25] (r_node9);
    \draw (r_node1)--(r_node9);

    \node at (11.5, -3.5) {(b) $\Gamma_G$};

\end{tikzpicture}
\caption{A graph $G$ and its $\Gamma_G$.}\label{G-and-Gamma-G}
\end{figure}

\normalsize

\begin{theorem}\label{thm: poset isomorphism for a general graph}
    The poset isomorphisms $\mathcal{L}(\lin_{2n-3}(G)) \cong \mathcal{L}(\mathcal{A}_{\Gamma_G}) \cong \Pi_{\Gamma_G}$ holds for all $n\ge 1$.
\end{theorem}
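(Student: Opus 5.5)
The plan is to exhibit an invertible linear change of coordinates on $\mathbb{R}^{2n-1}$ that carries $\lin_{2n-3}(G)$ hyperplane-by-hyperplane onto the graphical arrangement
\[
\mathcal{A}_{\Gamma_G}=\{z_a-z_b=0 : \{a,b\}\in E(\Gamma_G)\}
\]
in $\mathbb{R}^{S_1\sqcup S_2}$. Since $|S_1\sqcup S_2|=n+(n-1)=2n-1$, the dimensions agree. A linear isomorphism that sends the set of hyperplanes bijectively onto the set of hyperplanes preserves all intersections and inclusions among them. It therefore induces $\mathcal{L}(\lin_{2n-3}(G))\cong\mathcal{L}(\mathcal{A}_{\Gamma_G})$. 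The second isomorphism $\mathcal{L}(\mathcal{A}_{\Gamma_G})\cong\Pi_{\Gamma_G}$ is the standard fact for graphical arrangements (see Stanley's notes \cite{stanley-hyperplane-notes}), which I will recall briefly at the end.

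\textbf{The change of variables.} Following the recipe of Lazar and Wachs for the homogenized Linial case, I define the new coordinates as follows:
\[
z_{2n-2i+1}=x_i \quad (1\le i\le n), \qquad z_{2j}=x_{n-j}-y_{n-j} \quad (1\le j\le n-1).
\]
This map is triangular in the pairs $(x,y)$, so it is invertible: $x$ is read off from the odd coordinates, and then $y_p=x_p-z_{2(n-p)}$.

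I then check the two types of hyperplanes.
\begin{enumerate}
\item A hyperplane $x_i=x_j$ with $(i,j)\in E(G)$ becomes $z_{2n-2i+1}=z_{2n-2j+1}$. This is exactly an odd--odd edge of $\Gamma_G$, by the definition of $\Gamma_G$.
\item A hyperplane $x_p-x_q=y_p$ with $p<q$ reads $x_p-y_p=x_q$. Writing $p=n-j$ and $q=n-i+1$, it becomes $z_{2j}=z_{2i-1}$. As $p$ ranges over $1\le p\le n-1$, we get $1\le j\le n-1$. The condition $p<q$ translates to $n-j<n-i+1$, that is, $i\le j$.
\end{enumerate}
So the hyperplanes of the second type correspond bijectively to the odd--even edges $(2i-1,2j)$ with $1\le i\le j\le n-1$, which are precisely the remaining edges of $\Gamma_G$. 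This bookkeeping of indices, getting the reversal $i\mapsto 2n-2i+1$ and the inequality $i\le j$ to match the definition of $\Gamma_G$, is the only real obstacle. I expect the choice above to work, and I will double-check it on the example of Figure~\ref{G-and-Gamma-G}.

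\textbf{The graphical arrangement.} For the final step, to an intersection $X$ of hyperplanes of $\mathcal{A}_{\Gamma_G}$ I associate the partition of $S_1\sqcup S_2$ into the classes of coordinates forced to be equal on $X$. Equivalently, these classes are the connected components of the graph formed by the edges whose hyperplanes contain $X$. Each such block induces a connected subgraph, so the partition lies in $\Pi_{\Gamma_G}$. Conversely, each $\pi\in\Pi_{\Gamma_G}$ is realized by the subspace $\{z : z_a=z_b \text{ whenever } a,b \text{ lie in the same block of } \pi\}$, which is cut out by the edges within the blocks precisely because each block induces a connected subgraph. These two maps are inverse to each other and order-preserving (reverse inclusion corresponds to refinement), which completes the chain of isomorphisms for all $n\ge1$.
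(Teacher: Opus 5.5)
Your proposal is correct and is essentially the paper's proof: your substitution $z_{2n-2i+1}=x_i$, $z_{2j}=x_{n-j}-y_{n-j}$ is exactly the map $A^{T}$, where $A$ is the matrix of the paper's $\phi$ (which sends $e_{2i-1}\mapsto e_{n-i+1}$ and $e_{2i}\mapsto e_{n-i}-e_{2n-i}$), so it is the inverse of the paper's $\psi=(A^{-1})^{T}$, and it is followed by the same standard identification of a graphical arrangement's lattice with the bond lattice. Writing the map directly on coordinates, as you do, makes the matching $x_p-x_q=y_p \leftrightarrow \{2i-1,2j\}$ with $p=n-j$, $q=n-i+1$ more transparent than the paper's computation with normal vectors.
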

\begin{proof}
Let $(e_1,e_2, \dots,e_{2n-1}) $ be the standard basis for $\R^{2n-1}$. For $1\le i \le j \le n-1$, let
$$ H_{i,j} = \{v \in \R^{2n-1} : (e_{i}-e_{n+i} - e_{j+1}) \cdot v = 0 \} $$
and 
$$K_{i,j} = \{v \in \R^{2n-1} : (e_{2i-1}-e_{2j}) \cdot v = 0 \} .$$

Also, if $\{i,j\} \in E(G)$, we define
$$H_{i,j}' = \{v \in \R^{2n-1} : (e_{i}-e_{j}) \cdot v = 0 \}$$
and 
$$K_{i,j}' = \{v \in \R^{2n-1} : (e_{2n-2i+1}-e_{2n-2j+1}) \cdot v = 0 \} .$$
Clearly $\{H_{i,j} : 1\le i \le j \le n-1\} \cup \{H_{i,j}' : \{i,j\} \in E(G)\}$ is precisely the arrangement $\mathcal H_{2n-3}(G)$. Note that 
$\mathcal K_{2n-1}:=\{ K_{i,j} : 1\le i \le j \le n-1 \} \cup \{K_{i,j}' : \{i,j\} \in E(G)\}$ is the graphical arrangement $\mathcal A_{\Gamma_G}$. It is well known and easy to see that $$\mathcal L(\mathcal A_{\Gamma_G} ) \cong \Pi_{\Gamma_{G}}.$$  
We will prove the result by producing a vector space isomorphism $\psi: \R^{2n-1} \to \R^{2n-1}$ that takes $\mathcal A_{\Gamma_G}$ to $\mathcal{H}_{2n-3}(G)$.  Indeed,  such a map will induce an  isomorphism from $\mathcal{L}(\mathcal A_{\Gamma_G})$ to $\mathcal{L}(\mathcal{H}_{2n-3}(G)) $.  
  
First consider the linear operator $\phi:\R^{2n-1} \to \R^{2n-1}$ defined   on the standard basis by letting $\phi(e_{2i-1}) = e_{n-(i-1)}$ and $\phi(e_{2i}) = e_{n-i}-e_{2n-i}$ for all $i \in [n-1]$ and letting $\phi(e_{2n-1}) = e_1$ .  Let $A$ be the matrix of $\phi$ with respect to the standard basis.  One can easily check that $|\det A| = 1$.  Now let $\psi :\R^{2n-1} \to \R^{2n-1}$ be the linear operator whose matrix with respect to the standard basis is the transpose of $A^{-1}$.  Clearly $\psi$ is an isomorphism.  
 
We claim that $\psi$ takes the hyperplane $K_{i,j}$ to the hyperplane $H_{i,j}$ for all $1\le i \le j\le n-1$ and $K_{i,j}'$ to the hyperplane $H_{i,j}'$ for $\{i,j\} \in E(G)$. To prove the claim, let $v \in  K_{i,j}$, so $(e_{2i-1}-e_{2j}) \cdot v = 0$.   We have $$\phi (e_{2i-1}-e_{2j}) \cdot \psi(v) = (e_{n-(i-1)} - e_{n-j} + e_{2n-j}) \cdot \psi(v).$$  
We also have \begin{align*} \phi (e_{2i-1}-e_{2j}) \cdot \psi(v) &=(A (e_{2i-1}-e_{2j})) \cdot ((A^{-1})^T v) \\ &= (A^{-1}A  (e_{2i-1}-e_{2j})) \cdot  v \\ &=  (e_{2i-1}-e_{2j}) \cdot  v \\  &= 0 .\end{align*}
It follows that $(e_{n-(i-1)} - e_{n-j} + e_{2n-j}) \cdot \psi(v) = 0$.  Hence $\psi(v)$ is in $H_{n-j,n-i}$. 
Now, let $v \in K_{i,j}'$, so $(e_{2n-2i+1}-e_{2n-2j+1}) \cdot v = 0$. We have 
\begin{align*}\phi (e_{2n-2i+1}-e_{2n-2j+1}) \cdot \psi(v) &= \phi (e_{2(n-i+1)-1} -e_{2(n-j+1)-1}) \cdot \psi(v) \\
 &= (e_{i} - e_{j}) \cdot \psi(v) \\
\end{align*} 
We also have \begin{align*} \phi (e_{2i-1}-e_{2j-1}) \cdot \psi(v) &=(A (e_{2i-1}-e_{2j-1})) \cdot ((A^{-1})^T v) \\ &= (A^{-1}A  (e_{2i-1}-e_{2j-1})) \cdot  v \\ &=  (e_{2i-1}-e_{2j-1}) \cdot  v \\  &= 0 .\end{align*}
It follows that $(e_{i} - e_{j}) \cdot \psi(v) = 0$.  Hence $\psi(v)$ is in $K_{i,j}$,
which proves the claim.  It follows from the claim that $\psi$ takes $\mathcal K_{2n-1}$ to $\mathcal{H}_{2n-3}(G)$ as desired.
\end{proof}

\section{$\mathcal{R}$-DID forests and Dumont-like permutations}\label{sec4}

Consider $n,k \in \Z_{>0}$, where \(m\) is odd and satisfies \(m \leq 2n-1\). Let \(G\) be the graph defined by  
\[
V(G) = [n], 
\qquad 
E(G) = \bigl\{\{i,j\} : n- \lfloor \tfrac{m}{2} \rfloor \leq i < j \leq n \bigr\}.
\]
For this type of graph \(G\), we define \(\Gamma_G\) as follows. Let  
\[
S_1 \coloneqq \{1,3,\dots,2n-1\}, 
\qquad 
S_2 \coloneqq \{2,4,\dots,2n-2\}.
\]
Then \(\Gamma_{2n-1}^{m}\) is the graph with vertex set  
\[
V(\Gamma_{2n-1}^{m}) = S_1 \sqcup S_2
\]
and edge set  
\[
E(\Gamma_{2n-1}^{m}) =
\bigl\{ \{2i-1,\,2j\} : 1 \leq i \leq j \leq n-1 \bigr\}
\;\cup\;
\bigl\{ \{i,\,j\} : i,j \in \{1,3,5,\ldots,m\}, \; i<j \bigr\},
\]
where \(m\) is an odd integer.

\medskip

\noindent
\textbf{Example.}  
The graph \(\Gamma_{7}^{3}\) is obtained by taking \(n=4\) and \(m=3\).

\medskip
\noindent
More generally, for any finite subset $V$ of $\mathbb{Z}_{> 0}$ and any odd integer $m$, let $\Gamma_{V}^{m}$ be a graph on $V$ with an edge set  
\[
\begin{aligned}
E={}&\{\{u,v\} : u,v\in V,\; u<v,\; \text{$u$ is odd and $v$ is even}\} \\
&{}\cup
\{\{u,v\} : u,v\in V,\; u<v\le m,\; \text{$u$ and $v$ are odd}\}.
\end{aligned}
\]

\medskip

From Theorem \ref{thm: poset isomorphism for a general graph}, the poset isomorphisms $\mathcal{L}(\lin_{2n-3}(G)) \cong \mathcal{L}(\mathcal{A}_{\Gamma_{2n-1}^{m}}) \cong \Pi_{\Gamma_{2n-1}^{m}}$ holds for all $n\ge 1$. Hence, to study $\mathcal{L}(\lin_{2n-3}(G))$, it suffices to study $\Pi_{\Gamma_{2n-1}^{m}}$.

\medskip

\begin{proposition}\label{prop:BondLatticeStructure}
The bond lattice $\Pi_{\Gamma_{2n-1}^{m}}$ is the induced subposet of $\Pi_{2n-1}$ consisting of those partitions $\pi$ in which every non-singleton block has either  
\begin{enumerate}
    \item an odd minimum and an even maximum, or  
    \item an odd minimum and an odd maximum, both belonging to $\{1,3,5,\ldots,m\}$.
\end{enumerate}
\end{proposition}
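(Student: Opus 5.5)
By definition, $\Pi_{\Gamma_{2n-1}^{m}}$ is the induced subposet of $\Pi_{2n-1}$ whose elements are the partitions with every block inducing a connected subgraph of $\Gamma_{2n-1}^{m}$. So the proposition reduces to a statement about single blocks. Singletons are trivially connected. The claim to prove is: for $B\subseteq[2n-1]$ with $|B|\ge 2$, the induced graph $\Gamma_{2n-1}^{m}\big|_B$ is connected if and only if condition (i) or (ii) holds for $B$. I will prove both directions using only the shape of the edge set. Every edge is of one of two kinds:
\begin{itemize}
\item an edge $\{u,v\}$ with $u$ odd, $v$ even and $u<v$;
\item an edge between two odd vertices that both lie in $\{1,3,\dots,m\}$.
\end{itemize}
In particular, there are no edges between two even vertices.

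\emph{Necessity.} Suppose $\Gamma_{2n-1}^{m}\big|_B$ is connected with $|B|\ge 2$. Then every vertex of $B$ has a neighbour in $B$.

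Let $a=\min B$. If $a$ were even, its only neighbours in $\Gamma_{2n-1}^{m}$ would be odd vertices smaller than $a$. None of these lie in $B$, so $a$ would be isolated, a contradiction. Hence $a$ is odd.

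Let $M=\max B$. Suppose $M$ is odd with $M>m$. Its neighbours are then only even vertices larger than $M$, since odd--odd edges require both endpoints to be $\le m$. None of these lie in $B$, so $M$ would be isolated, again a contradiction. Hence either $M$ is even, giving (i), or $M$ is odd with $M\le m$. In the latter case $a\le M\le m$ is odd, so both $a$ and $M$ lie in $\{1,3,\dots,m\}$, giving (ii).

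\emph{Sufficiency.} Let $a=\min B$ be odd and $M=\max B$.

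Case (i): $M$ is even. Every even $v\in B$ satisfies $v>a$, so $v$ is adjacent to $a$. Every odd $u\in B$ satisfies $u<M$, so $u$ is adjacent to $M$. Finally $a$ is adjacent to $M$. Hence $\Gamma_{2n-1}^{m}\big|_B$ is connected, as every vertex is joined to $a$ through at most two edges.

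Case (ii): $a,M\in\{1,3,\dots,m\}$. Every odd element of $B$ is at most $M\le m$, so the odd elements of $B$ form a clique. Every even $v\in B$ exceeds $a$ and is therefore adjacent to $a$. Hence $\Gamma_{2n-1}^{m}\big|_B$ is again connected.

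Combining the two directions with the definition of the bond lattice yields the proposition. There is no real obstacle here. The only point requiring care is the necessity step for an odd maximum: one must observe that an odd vertex exceeding $m$ has no odd neighbours and no even neighbours below it.
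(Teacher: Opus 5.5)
Your proof is correct and follows essentially the same route as the paper. Both arguments reduce to connectivity of a single non-singleton block. Necessity comes from examining which neighbours the minimum and maximum can have, and sufficiency comes from joining every even vertex to the odd minimum and every odd vertex to the maximum (an even maximum in case (i), or the odd clique in case (ii)). Your version is slightly more careful than the paper's in one place: you state explicitly that in case (ii) the minimum also lies in $\{1,3,\ldots,m\}$, because it is odd and at most $M\le m$.
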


\begin{proof}
Suppose that $\pi \in \Pi_{\Gamma_{2n-1}^{m}}$, and let $B$ be a non-singleton block of $\pi$ with $u = \min B$.  
Since $\Gamma_{2n-1}^{m}[B]$ is connected, there exists some $v \in B$ such that $\{u,v\}$ is an edge of $\Gamma_{2n-1}^{m}$.  
By the definition of $\Gamma_{2n-1}^{m}$, and since $u < v$, it follows that $u$ must be odd.  

Now let $v = \max B$.  
Again, because $\Gamma_{2n-1}^{m}[B]$ is connected, there exists some $u \in B$ with $\{u,v\}$ an edge of $\Gamma_{2n-1}^{m}$.  
By the definition of $\Gamma_{2n-1}^{m}$, and since $u < v$, it follows that $v$ must be either even or an odd number belonging to $\{1,3,\ldots,m\}$.  

Conversely, suppose that we have a partition $\pi = B_1|\cdots|B_k \in \Pi_{2n-1}$ such that each non-singleton block $B_i$ has either an odd minimum and an even maximum, or an odd minimum and an odd maximum belonging to $\{1,3,5,\ldots,m\}$. We show that $\Gamma_{2n-1}^{m}[B_i]$ is connected.  

\begin{enumerate}
    \item Let $m_i = \min B_i$ (odd) and $M_i = \max B_i$ (even). By the definition of $\Gamma_{2n-1}^{m}$, each odd vertex in $B_i$ is adjacent to $M_i$, and each even vertex in $B_i$ is adjacent to $m_i$. In particular, $m_i$ and $M_i$ are adjacent, so $\Gamma_{2n-1}^{m}[B_i]$ is connected.  

    \item Let $m_i = \min B_i$ (odd) and $M_i = \max B_i$ (odd, with $M_i \in \{1,3,\ldots,m\}$). By the definition of $\Gamma_{2n-1}^{m}$, each odd vertex in $B_i$ is adjacent to $M_i$, and each even vertex in $B_i$ is adjacent to $m_i$. In particular, $m_i$ and $M_i$ are adjacent, so $\Gamma_{2n-1}^{m}[B_i]$ is connected.  
\end{enumerate}
\end{proof}

To prove Theorem ~\ref{thm: char-poly-DID}, we will require the following lemma, which describes the lexicographic order on the edges of the graph under consideration.  

\begin{lemma}[Blockwise odd--odd then odd--even lexicographic order]\label{lem:Blockwise odd-odd then odd-even lexicographic order}
Let \(n\ge 2\) and
\[
O=\{1,3,5,\dots,2n-1\},\qquad
\mathcal P=\{(a,b)\in\mathbb Z^2:\ a\in O,\ a<b\le 2n-1\}.
\]
For each fixed \(a\in O\) set \(B_a=\{b\in\mathbb Z:\ a<b\le 2n-1\}\).
Define a relation $\prec_a$ on $B_a$ by declaring, for $b_1,b_2\in B_a$:
\begin{enumerate}
  \item if $b_1,b_2$ are odd then $b_1\prec_a b_2 \iff b_1<b_2$ (usual order);
  \item if $b_1$ is odd and $b_2$ is even then $b_1\prec_a b_2$;
  \item if $b_1,b_2$ are even then $b_1\prec_a b_2 \iff b_1>b_2$ (reverse usual order on evens).
\end{enumerate}
Now define a relation $\prec$ on $\mathcal P$ by
\[
(a,b)\prec(c,d)\quad\Longleftrightarrow\quad
\big(a<c\big)\ \text{or}\ \big(a=c\ \text{and}\ b\prec_a d\big),
\]
where $<$ on the first coordinate is the usual order on $O$ (i.e. $1<3<5<\cdots<2n-1$).

Then $\prec$ is a total (linear) order on $\mathcal P$.  Moreover, with respect to $\prec$ the pairs appear blockwise by the first coordinate $a$ (increasing odd $a$), and for each fixed $a$ the second coordinates appear with all odd $b$ in increasing order followed by all even $b$ in decreasing order.  In particular,
\[
(1,3)\prec(1,5)\prec\cdots\prec(1,2n-1)\prec(1,2n-2)\prec(1,2n-4)\prec\cdots\prec(1,2)
\prec(3,5)\prec(3,7)\prec\cdots
\]
\end{lemma}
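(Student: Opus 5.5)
The plan is to check directly that each $\prec_a$ is a strict total order on $B_a$, and then to recognize $\prec$ as the lexicographic order built from the usual order on first coordinates and the orders $\prec_a$ on second coordinates.

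\emph{Step 1: each $\prec_a$ is a strict total order on $B_a$.} Split $B_a$ into its odd part $B_a^{o}$ and its even part $B_a^{e}$. On $B_a^{o}$ the relation is the usual order $<$. On $B_a^{e}$ it is the reverse order $>$. Between the two parts, every odd element precedes every even one. So $\prec_a$ is the ordinal sum of two chains, $(B_a^{o},<)\oplus(B_a^{e},>)$.

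This gives the order axioms directly:
\begin{itemize}
\item \emph{Irreflexivity} holds because $<$ and $>$ are irreflexive, and clause (ii) only compares elements of different parity.
\item \emph{Trichotomy}: for distinct $b_1,b_2$, either they have the same parity, in which case exactly one of (i) or (iii) applies in exactly one direction, or they have different parity, in which case (ii) applies in exactly one direction.
\item \emph{Transitivity}: if $b_1\prec_a b_2\prec_a b_3$, I check the parity patterns. A chain never passes from even back to odd, so the only patterns are odd--odd--odd, odd--odd--even, odd--even--even and even--even--even. Each of these yields $b_1\prec_a b_3$ by transitivity of $<$ or $>$, or by clause (ii).
\end{itemize}

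\emph{Step 2: $\prec$ is a total order on $\mathcal P$.} The relation $\prec$ is exactly the lexicographic order on the disjoint union $\bigsqcup_{a\in O}\{a\}\times B_a$, where $O$ carries the usual order and each fiber carries $\prec_a$. I verify this directly:
\begin{itemize}
\item \emph{Irreflexivity}: $(a,b)\prec(a,b)$ would require either $a<a$ or $b\prec_a b$, and both are impossible.
\item \emph{Trichotomy}: for distinct pairs, if $a\ne c$ then $<$ on $O$ decides; if $a=c$ then $b\ne d$ and Step 1 decides. In each case exactly one direction holds.
\item \emph{Transitivity}: given $(a,b)\prec(c,d)\prec(e,f)$, I split into cases. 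If $a<c$ or $c<e$, then $a<e$, since $a\le c\le e$ with at least one inequality strict. Otherwise $a=c=e$, and Step 1 applies.
\end{itemize}

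\emph{Step 3: the description of the listing.} Since $\prec$ is lexicographic with first coordinate primary, pairs appear in blocks by increasing odd $a$. Within a block, the listing follows the ordinal-sum description from Step 1: the odd $b>a$ in increasing order, then the even $b>a$ in decreasing order. For $a=1$ this gives $3,5,\dots,2n-1$ followed by $2n-2,2n-4,\dots,2$. The block $a=3$ then begins with $(3,5),(3,7),\dots$, which recovers the displayed chain.

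There is no real obstacle here. The only point needing care is the transitivity case analysis in Step 1, specifically observing that no chain can go from an even element to an odd one.
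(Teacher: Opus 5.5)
Your proposal is correct and follows the same route as the paper's sketch: each $\prec_a$ is the odd elements of $B_a$ in increasing order followed by the even elements in decreasing order, and $\prec$ is the lexicographic combination of this with the usual order on first coordinates. The only difference is that you write out the irreflexivity, trichotomy and transitivity checks, which the paper covers by citing the fact that a lexicographic combination of total orders is a total order.
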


\begin{proof}[Sketch of proof]
For each fixed \(a\), the relation \(\prec_a\) is a total order on \(B_a\): \(B_a\) is partitioned into odd and even elements; odd elements are ordered in increasing order, even elements are ordered in decreasing order, and every odd is declared smaller than every even. Hence \(\prec_a\) is total, antisymmetric, and transitive.

The ordering \(\prec\) on \(\mathcal P\) is the lexicographic combination of the usual total order on the first coordinate \(a\) (restricted to odd \(a\)) with the total order \(\prec_a\) on the second coordinate. Lexicographic combination of total orders is again a total order, so \(\prec\) is a linear order. The explicit description of the listing follows immediately from the definition of \(\prec_a\) and the fact that blocks are ordered by increasing \(a\). That shows, for instance, \((1,2n-1)\prec(1,2n-2)\) and also, every \((1,\ast)\) precedes every \((3,\ast)\).
\end{proof}

We are now ready to prove Theorem \ref{thm: char-poly-DID}.

\begin{proof}[Proof of Theorem \ref{thm: char-poly-DID}]
    We prove this result by showing that under the appropriate ordering of the edges of $\Gamma_{2n-1}^{m}$, the NBC forests of $\Gamma_{2n-1}^{m}$ are precisely the $\mathcal{R}$-DID forests on $[2n-1]$. For the total order on the edges of $\Gamma_{2n-1}^{m}$ we use the order in Lemma \ref{lem:Blockwise odd-odd then odd-even lexicographic order}. In particular, by associating edges $\{i,j\}$ with the ordered pair $(i,j)$ we have the following:
\[
(1,3)\prec(1,5)\prec\cdots\prec(1,2n-1)\prec(1,2n-2)\prec(1,2n-4)\prec\cdots\prec(1,2)
\prec(3,5)\prec(3,7)\prec\cdots
\] 
Since adding an edge between two different components of a forest can never complete a cycle, a forest is an NBC forest of $\Gamma_{2n-1}^{m}$ if and only if trees are NBCs trees of the subgraph of $\Gamma_{2n-1}^{m}$ induced by the node set of the trees. Hence, it suffices to prove that the NBC trees of $\Gamma_{2n-1}^{m}$ are precisely the $\mathcal{R}$-DID trees on $[2n-1]$. 

Let $T$ be an $\mathcal{R}$-DID tree on $[2n-1]$. Suppose that $e = \{u,v\}$ is an edge of $\Gamma_{2n-1}^{m}$ that is not in $T$. If we add $e$ to $T$, we create a unique cycle $\rho$, which contains $e$. To show that $T$ is NBC, it suffices to find an edge of $\rho$ that strictly precedes $e$ in the order $\prec$. 

We will complete this in two parts:

\begin{enumerate}
    \item Suppose that $e = \{2i-1,2j\}$. Root $T$ at its smallest node and let $h$ be the youngest common ancestor of $2i-1$ and $2j$ (that is, the ancestor farthest from the root). Clearly, $h$ is a node of $\rho$.   

    Suppose $h$ is odd. Recall that either $h \in \mathcal{R}$ or $h \notin \mathcal{R}$. Then $h \leq 2i-1$, since $T$ is an $\mathcal{R}$-DID tree.  If $h$ is the parent of $2j$, then $h$ cannot be $2i-1$, because $\{2i-1,2j\}$ is not an edge of $T$. Therefore, $h < 2i-1$, which implies that $\{h,2j\} \prec \{2i-1,2j\} = e$.  Thus, $\{h,2j\}$ is the edge of $\rho$ that we seek.
If $h$ is not the parent of $2j$, then let $2k$ or $2k-1$ be the child of $h$ that is a proper ancestor of $2j$.  We thus have either $2k > 2j$ or $2k-1 < 2j$, which implies either $\{h, 2k\} \prec \{2i-1, 2j\} = e$ or $\{h, 2k-1\} \prec \{2i-1, 2j\} = e$.  Thus, in this case, $\{h, 2k\}$ or $\{h,2k-1\}$ is the edge of $\rho$ that we seek.

The case that $h$ is even is handled analogously.

    \item Suppose that $e = \{i,j\}$, where $i <j \; \text{and} \; \{i,j\} \subseteq \mathcal{R}$. Root $T$ at its smallest node and let $h$ be the youngest common ancestor of $i$ and $j$ (that is, the ancestor farthest from the root). Clearly, $h$ is a node of $\rho$, and $h$ is an odd node belonging to $\mathcal{R}$ with $h \le i$, since $T$ is an $\mathcal{R}$-DID tree.   

    If $h$ is the parent of $j$, then $h$ cannot be $i$, because $\{i,j\}$ is not an edge of $T$. Therefore, $h < i$, which implies that  $\{h,j\} \prec \{i,j\} = e$.  Thus, $\{h,j\}$ is the edge of $\rho$ that we seek.
If $h$ is not the parent of $j$, then let $k \in \mathcal{R}$ be the child of $h$ that is a proper ancestor of $j$.  We, thus, have $k < j$, which implies $\{h, k\} \prec \{i,j\} = e$.  Thus, in this case, $\{h, k\}$ is the edge of $\rho$ that we seek.
\end{enumerate}
We can now conclude that $T$ is an NBC tree of $\Gamma_{2n-1}^{m}$. 

\medskip
Conversely, let $T$ be an NBC tree of $\Gamma_{2n-1}^{m}$. Suppose that $T$ is not an $\mathcal{R}$-DID. Then $T$ rooted at its smallest node has a node $x$ with one of the following properties:

\begin{enumerate}
    \item an even node $x$ with a child $y \in \mathcal{R}$.
    \item an odd node $x$ with a child $y \in \mathcal{R}$ with $x > y$.
    \item an even node $x$ with a grandchild $y$ satisfying $x < y$.
    \item an odd node $x$ with a grandchild $y$ satisfying $x > y$.
\end{enumerate}
To complete the proof, we split the argument into four cases:

\medskip
\textbf{Case I:} An even $x$ with a child $y \in \mathcal{R}$. 

Since $T$ is rooted at its smallest node and $x$ is not the root, since the smallest node of $T$ must be odd, let $u$ be the parent of $x$. Thus, $u, x, y$ form a path in $T$. 
Observe that $x > u$ and $x > y$. 
Note that $u \notin \mathcal{R}$, because if $u \in \mathcal{R}$, then $\{u, y\}$ is an edge in $\Gamma_{2n-1}^{m}$, 
and by the order on the edge set, we have $\{u, y\} \prec \{u, x\}$ and $\{u, y\} \prec \{x, y\}$. 
This implies that the subgraph of $T$ induced by $\{u, x, y\}$ is a broken circuit, 
which contradicts the hypothesis that $T$ is an NBC tree. 
Hence, $u \notin \mathcal{R}$.

Observe that the smallest node, being an odd node, must belong to $\mathcal{R}$. 
Moreover, every odd node from $\mathcal{R}^c$ is larger than every node in $\mathcal{R}$, and $y \in \mathcal{R}$.

Let $v$ be the youngest ancestor of $y$ that belongs to $\mathcal{R}$. 
It follows that $\{v, y\}$ is an edge of $\Gamma_{2n-1}^{m}$. 
Let $\rho$ be the path induced by the node set $\{v, u, x, y\}$ in $T$. 
Clearly, $\{v, y\} \prec \{a, b\}$ for every $\{a, b\} \in E(\rho)$. 
This implies that $\rho$ is a broken circuit, which contradicts the hypothesis that $T$ is an NBC tree.

\medskip
\textbf{Case II:} An odd $x \in \mathcal{R}$ with a child $y \in \mathcal{R}$ such that $x > y$.

Since $T$ is rooted at its smallest node and $x$ is not the root (as $x > y$), 
let $u$ be the parent of $x$, which is an odd node belonging to $\mathcal{R}$ by Case~I. 
Thus, $u, x, y$ form a path in $T$ with $u < x$ and $y < x$, by the minimality of $x$. 
Since $u, y \in \mathcal{R}$, it follows that $\{u, y\}$ is an edge of $\Gamma_{2n-1}^{m}$. 
Observe that, in the total order of the edge set, $\{u, y\} \prec \{u, x\}$ and $\{u, y\} \prec \{x, y\}$. 
This implies that the subgraph of $T$ induced by $\{u, x, y\}$ is a broken circuit, 
which contradicts the hypothesis that $T$ is an NBC tree. 

\medskip
\textbf{Case III:} An even $x$ with a grandchild $y$ satisfying $x < y$.

Since $T$ is rooted at its smallest node and $x$ is not the root, the smallest node of $T$ must be odd. Let $u$ be the parent of $x$ and let $v$ be the parent of $y$, which implies that $x$ is the parent of $v$.  Thus, $u,x,v,y$ form a path in $T$. 

We claim that $\{u,y\}$ is an edge of $\Gamma_{2n-1}^{m}$. As $x$ is an even node, $u$ must be an odd node with $x > u$. Thus, we have $u < x < y$. From Case I, we know that $v$ is an odd node not from $\mathcal{R}$, which implies that $y$ is an even node. Tt follows that $\{u,y\}$ is an edge in $\Gamma_{2n-1}^{m}$. In the total order on the edge set, $\{u,y\} \prec \{u,x\}$. By the minimality of the choice of $x$, since $u$ is closer to the root than $x$ is, we have $u < v$. Hence, $\{u,y\} \prec \{u,x\}, \{v,x\}, \{v,y\}$. This implies that the subgraph of $T$ induced by $\{u,x,v,y\}$ is a broken circuit which contradicts the hypothesis that $T$ is an NBC tree. 

\medskip
\textbf{Case IV:} An odd $x$ with a grandchild $y$
satisfying $x >y$.

Since $T$ is rooted at its smallest node, and $x$ is not the root (as $x > y$). Let $u$ be the parent of $x$ and let $v$ be the parent of $y$, which implies that $x$ is the parent of $v$.   Thus, $u,x,v,y$ form a path in $T$. Since $y$ is a grandchild of $x$, it has to be an odd node. 

We will complete this case using three subcases:

\medskip
\textbf{Sub-case I:} $x \notin \mathcal{R}$.

In this sub-case, from Case I, we have $u$ is an even node, $v$ is an even node and $y$ is an odd node which does not belong to $\mathcal{R}$. Thus, we have $u > x > y$. It follows that $\{u,y\}$ is an edge of $\Gamma_{2n-1}^{m}$. In the total order on the edge set, $\{u,y\} \prec \{u,x\}$. By the minimality of the choice of $x$, since $u$ is closer to the root than $x$ is, we have $u > v$. Hence, $\{u,y\} \prec \{u,x\}, \{v,x\}, \{v,y\}$. This implies that the subgraph of $T$ induced by $\{u,x,v,y\}$ is a broken circuit, which contradicts the hypothesis that $T$ is an NBC tree. 

\medskip
\textbf{Sub-case II:} $x \in \mathcal{R}$ and $v \in \mathcal{R}$.

In this sub-case, $u,v,x,y \in \mathcal{R}$. By Case II, we have $u < x < v < y$.

\medskip
\textbf{Sub-case III:} $x \in \mathcal{R}$ and $v$ is even node.

In this sub-case, $u,x \in \mathcal{R}$ and $y$ is an odd node which does not belong to $\mathcal{R}$. In $\Gamma_{2n-1}^{m}$, $x < y$. 

\medskip
Therefore, $T$ is $\mathcal{R}$-DID.
\end{proof}
\subsection{Dumont-like permutations}

We introduce a class of permutations analogous to Dumont permutations and give a bijection between these permutations and the $\RR$-DID forests, where $\mathcal{R} = \{1,3,\ldots,m\}$ for some odd integer $m$. 

\subsubsection{$\RR$-D-permutations} 
Let $A$ be a finite subset of $\mathbb{Z}_{>0}$. A permutation $\sigma \in S_A$ is an $\mathcal{R}$-D-permutation on $A$ if  
$i \le \sigma(i)$ whenever $i$ is odd and $i \notin \mathcal{R}$, and $i \ge \sigma(i)$ whenever $i$ is even.  
We write $\mathcal{RD}_A$ for the set of $\RR$-D-permutations on $A$ and $\mathcal{RDC}_A$ for the set of $\RR$-D-cycles on $A$. If $A = [n]$, we denote these by $\mathcal{D}_n^{\RR}$ and $\mathcal{DC}_n^{\RR}$.

\begin{example}
Let $A = [6]$ and $\RR = \{1,3\}$. Then examples of $\RR$-D-permutations are

\[
(3421)(65) \qquad (431)(65)(2) \qquad  (321)(65)(4) \qquad (21)(43)(65)
\]

The $\RR$-$\mathcal{D}$-cycles on $[6]$ are:

\[
(564321) \qquad (563421) \qquad (435621) \qquad (356421)
\]
\end{example}

\subsubsection{Bijection between $\RR$-DID trees and $\RR$-D-cycles}

For any tree $T$ (rooted or unrooted), let $|T|$ denote the number of nodes of $T$ and $V(T)$ denote the set of vertices(node). 
Recall that a \emph{plane tree} is a rooted tree in which the children of each node are linearly ordered. Let $T$ be a plane tree on a subset of $\Z_{>0}$. When $T$ is drawn in the plane, the children of each node $v$ are drawn from left to right according to their linear order.

Let $\idt$ be the set of $\RR$-DID trees.  For $T \in \idt$, 
let $\hat T$ be the plane tree obtained  by rooting $T$ at its smallest node (which is odd when $|T| > 1$) and fixing the following total ordering on the children of each node $v$ of $T$:
\begin{itemize}
    \item If $v$ is even, then arrange all its children (which are necessarily odd and not contained in $\RR$) in increasing order from left to right.

    \item If $v$ is odd and $v \notin \mathcal{R}$, then arrange all its children (which are necessarily even) in decreasing order from left to right.

    \item If $v$ is odd and $v \in \mathcal{R}$, then its children may include both even nodes and nodes in $\mathcal{R}$. Arrange them as follows:
    \begin{itemize}
        \item First, place the children that belong to $\mathcal{R}$ in increasing order from left to right.
        \item Then, place the even children in decreasing order from left to right.
    \end{itemize}
\end{itemize}

Also, if $V(T) \cap \RR = \emptyset$ then let $\tilde T$ be the plane tree obtained by rooting $T$ at its largest node (which is even when $|T| >1$) and fixing the above total ordering on the children of each node of $T$.

\begin{figure}[h]
\centering

\begin{subfigure}{0.42\textwidth}
\centering
\begin{tikzpicture}[scale=0.7]
\tikzstyle{node} = [circle, draw, fill=blue!20, minimum size=1mm]

    \node[node] (node1) at (0,0) {2};
    \node[node] (node2) at (2,0) {3};
    \node[node] (node3) at (2,2) {1};
    \node[node] (node4) at (1,-2) {4};
    \node[node] (node5) at (3,-2) {5};
    \node[node] (node6) at (4,0) {6};

    \draw (node1)--(node3);
    \draw (node3)--(node6);
    \draw (node2)--(node3);
    \draw (node2)--(node4);
    \draw (node2)--(node5);

\end{tikzpicture}
\caption{$T$}
\end{subfigure}
\hspace{-0.4cm}
\begin{subfigure}{0.42\textwidth}
\centering
\begin{tikzpicture}[scale=0.7]
\tikzstyle{node} = [circle, draw, fill=blue!20, minimum size=1mm]

    \node[node] (node1) at (0,0) {3};
    \node[node] (node2) at (2,0) {6};
    \node[node] (node3) at (2,2) {1};
    \node[node] (node4) at (1,-2) {5};
    \node[node] (node5) at (-1,-2) {4};
    \node[node] (node6) at (4,0) {2};

    \draw (node1)--(node3);
    \draw (node3)--(node6);
    \draw (node2)--(node3);
    \draw (node1)--(node4);
    \draw (node1)--(node5);

\end{tikzpicture}
\caption{$\widehat{T}$}
\end{subfigure}

\caption{An $\{1,3\}$-DID tree $T$ and its plane tree rooted at smallest node $\widehat{T}$.}\label{fig:PlaneTree}
\end{figure}

Now let $T$ be a plane  tree  on a subset of $\Z_{> 0}$.  When drawn in the plane the order is depicted from left to right.  For each vertex $v$, the order of its children determines  a  ``left to right" order of the subtrees rooted at the children.  If  $|T|>1$, let  $T_1,T_2,\dots,T_k$ be the subtrees rooted at the children of the root $r$ of $T$ ordered from ``left to right".  The   {\em postorder word} $\pw(T)$ of $T$ is defined recursively as the concatenation  $$\pw(T) = \pw(T_1) \cdot \pw(T_2)\cdot \,\, \cdots \,\,  \cdot \pw(T_k)\cdot  r,$$
if $|T| > 1$ and by $\pw(T)= r$ if $|T| = 1$. For the plane tree depicted in Figure~\ref{fig:PlaneTree}, 
\[
\operatorname{pw}(\widehat{T}) = 453621.
\]

Let $\mathcal W$ be the set of all words $w_1w_2\dots w_m$ with $m \ge 1$, whose letters are distinct positive integers, satisfying:
\begin{enumerate}
 \item For all $i \in [m-1]$, if $w_i$ is odd and $w_i \notin \RR$, then $w_i < w_{i+1}$.
 \item For all $i \in [m-1]$, if $w_i$ is even, then $w_i > w_{i+1}$.
 \item If $w_m$ is odd, then it is the smallest letter of $w$.
 \item If $w_m$ is even, then $w_i \notin \RR$ for all $i \in [m-1]$, and $w_m$ is the largest letter of $w$.
\end{enumerate}

\begin{lemma}
Let $T \in \idt$ with $n = |T|$. Then $\mathrm{pw}(\hat T) \in \mathcal{W}$, and if $V(T) \cap \RR = \emptyset$ then $\mathrm{pw}(\tilde T) \in \mathcal{W}$.
\end{lemma}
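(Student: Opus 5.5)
The plan is to exploit the local structure of a postorder word. If $w=\pw(P)$ for a plane tree $P$, then every letter $w_i$ with $i<m$ is a non-root node. The letter following it is determined by a simple dichotomy, which follows by induction from the recursive definition $\pw(P)=\pw(P_1)\cdots\pw(P_k)\cdot r$:
\begin{enumerate}
\item if $w_i$ is the last (rightmost) child of its parent $p$, then $w_{i+1}=p$;
\item otherwise $w_{i+1}$ is the first letter of $\pw$ of the subtree rooted at the next sibling $s$ of $w_i$. That first letter is the leaf $\ell$ reached from $s$ by repeatedly taking leftmost children, so $\ell=s$ or $\ell$ is a descendant of $s$.
\end{enumerate}
The last letter $w_m$ is always the root. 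Conditions (i) and (ii) of $\mathcal W$ are therefore checked by a parent/sibling case analysis, and conditions (iii) and (iv) by looking at the root.

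For $\hat T$, I will first check (i). Let $w_i$ be odd with $w_i\notin\RR$. By the $\RR$-DID rules, odd nodes outside $\RR$ are never children of odd nodes, so the parent $p$ of $w_i$ is even and satisfies $p>w_i$. In case (a) we get $w_{i+1}=p>w_i$. In case (b), the children of an even node are arranged increasingly, so $s>w_i$. Moreover $s$ is odd and not in $\RR$, hence $s$ is smaller than all its descendants, which gives $\ell\ge s>w_i$.

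For (ii), let $w_i$ be even. Its parent $p$ is odd and smaller than its descendants, so in case (a) $w_{i+1}=p<w_i$. In case (b), whether $p\in\RR$ or not, the even children of $p$ come last and in decreasing order. So the next sibling $s$ is even and $s<w_i$. Since an even node exceeds all its descendants, $\ell\le s<w_i$.

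For (iii) and (iv): the root of $\hat T$ is the smallest node, which is odd when $|T|>1$, so (iii) holds. If $|T|=1$, the word has a single letter and (iii) or (iv) holds trivially, since the condition on $w_i$ for $i\in[m-1]$ is vacuous.

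For $\tilde T$ (when $V(T)\cap\RR=\emptyset$), the $\RR$-DID conditions reduce exactly to the $\ID$ conditions of Lazar and Wachs. The step I expect to be the main obstacle is justifying that these conditions still hold when $T$ is rerooted at its largest vertex; the definition in the paper is phrased with rooting at the smallest vertex, while the $\ID$ definition explicitly allows either root. I would invoke that equivalence from Lazar--Wachs. Alternatively, I would verify it directly: along the path from the smallest to the largest vertex, parities alternate and the values zigzag, and reversing the orientation preserves "odd below its descendants, even above its descendants" for every vertex.

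Granting this, the same two-case argument applies verbatim. Parents of odd nodes are even and larger; parents of even nodes are odd and smaller; even nodes have increasingly ordered odd children, and odd nodes have decreasingly ordered even children. This yields (i) and (ii). The root of $\tilde T$ is the largest node, which is even when $|T|>1$, and all letters avoid $\RR$, so (iv) holds.
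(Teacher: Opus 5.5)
Your proposal is correct and follows essentially the paper's argument: the paper also checks (i) and (ii) by splitting on whether $w_i$ is the last child of its parent (so $w_{i+1}$ is the parent) or not (so $w_{i+1}$ lies in the next sibling's subtree, bounded by that sibling), and reads (iii)/(iv) off the root. On rerooting for $\tilde T$, you are more careful than the paper, which glosses over it, and the direct check is simpler than your zigzag sketch suggests. When $V(T)\cap\mathcal{R}=\emptyset$, the child of the smallest node $r$ on the path to the largest node $M$ is even and exceeds all its descendants, so it equals $M$. Hence $r$ and $M$ are adjacent, rerooting only reverses the edge $\{r,M\}$, every other vertex keeps its descendant set, and the conditions at $r$ and $M$ are immediate.
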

\begin{proof} Suppose $T \in \idt$. We prove the result for $\hat T$ and $\tilde T$ simultaneously by induction on $|T|$. The result is obviously true for $|T|=1$, so assume $|T| >1$.

Let
\[
w=\begin{cases}
\mathrm{pw}(\hat T), & \text{for the first statement},\\
\mathrm{pw}(\tilde T), & \text{when }V(T)\cap\mathcal R=\emptyset .
\end{cases}
\]
Write \(w=w_1\cdots w_n\).

If \(w=\mathrm{pw}(\hat T)\), then \(w_n\) is the root of \(\hat T\),
hence the smallest node of \(T\), which is odd when \(|T|>1\).

If \(w=\mathrm{pw}(\tilde T)\), then \(V(T)\cap\mathcal R=\emptyset\),
and \(w_n\) is the root of \(\tilde T\), hence the largest node of \(T\),
which is even when \(|T|>1\).
Therefore condition~(3) or condition~(4) in the definition of
\(\mathcal W\) holds accordingly.

Thus it remains to verify conditions~(1) and~(2) for all \(i<n\).
\begin{itemize}
    \item if $w_i$ is odd and $w_i \notin \mathcal{R}$, then $w_i < w_{i+1}$, and
    \item if $w_i$ is even, then $w_i > w_{i+1}$.
\end{itemize}

The arguments below depend only on the prescribed left-to-right
ordering of children, which is the same in both \(\hat T\) and
\(\tilde T\).

\medskip
\noindent
\noindent\textbf{Odd case:}  
Let $w_i \notin \mathcal{R}$ be odd node. In $\hat T$, the vertex $w_i$ is a child of some even vertex $x$. Since, $T$ is a $\RR$-DID tree, $x$ is larger than all of its descendants, and all the children of $x$ are odd and do not belong to $\mathcal{R}$.

Let
\[
\mathrm{Ch}(x) = \{x_1, x_2, \ldots, x_\ell\}
\quad \text{with} \quad
x_1 < x_2 < \cdots < x_\ell,
\]
where $\mathrm{Ch}(x)$ denotes the set of the children of $x$.

We consider two cases.

\medskip
\noindent\textbf{Case 1:} $w_i = x_\ell$.  
Then, in $\mathrm{pw}(\hat T)$, we have $w_{i+1} = x$. Since $x$ is larger than all its children,
\[
w_i = x_\ell < x = w_{i+1}.
\]

\medskip
\noindent\textbf{Case 2:} $w_i = x_j$ for some $j < \ell$.  
Then $w_{i+1}$ lies in the subtree rooted at $x_{j+1}$. Since $x_j < x_{j+1}$, and every descendant of $x_{j+1}$ is at least $x_{j+1}$, it follows that
\[
w_i = x_j < x_{j+1} \le w_{i+1}.
\]

Thus, in both cases, $w_i < w_{i+1}$.

\medskip
\noindent\textbf{Even case:}  
Now let $i < n$ and suppose that $w_i$ is even. In $\hat T$, the vertex $w_i$ is a child of some odd vertex $x$. Again as $T$ is $\RR$-DID tree, $x$ is smaller than all of its descendants.

We again consider two cases.

\medskip
\noindent\textbf{Case 1:} $x \in \mathcal{R}$.  
The children of $x$ consist of nodes in $\mathcal{R}$ and even nodes. Write
\[
\mathrm{Ch}(x) = V_1 \sqcup V_2,
\]
where $V_1 \subseteq \mathcal{R}$ and $V_2$ consists of all even children of $x$. By construction of $\hat T$, nodes in $V_1$ appear to the left of nodes in $V_2$. Let
\[
V_2 = \{x_1, x_2, \ldots, x_\ell\}
\quad \text{with} \quad
x_1 > x_2 > \cdots > x_\ell.
\]

If $w_i = x_\ell$, then $w_{i+1} = x$, and since $x$ is smaller than all its children,
\[
w_i = x_\ell > x = w_{i+1}.
\]

If $w_i = x_j$ for some $j < \ell$, then $w_{i+1}$ lies in the subtree rooted at $x_{j+1}$. Since $x_j > x_{j+1}$ and every descendant of $x_{j+1}$ is at most $x_{j+1}$, we obtain
\[
w_i = x_j > x_{j+1} \ge w_{i+1}.
\]

\medskip
\noindent\textbf{Case 2:} $x \notin \mathcal{R}$.  
In this case, all children of $x$ are even. The same argument as above applies, and we again obtain $w_i > w_{i+1}$.

\medskip
This completes the proof.
\end{proof}

Given a word $w = w_1 w_2 \cdots w_m$ over alphabet $\Z_{\ge 0}$ , we say that $w_j$ is a right-to-left minimum if $w_j < w_i$ for all $i > j$. Similarly we say that $w_j$ is a right-to-left maximum if $w_j > w_i$ for all
$i > j$. For example, the right-to-left minima of $45813267$ are $1, 2, 6, 7$ and right-to-left maxima are $7, 8$. We also let $w '$ denote $w$ with its last letter removed, that is, $w' = w_1 \cdots w_{m-1}$.

\begin{lemma}\label{lem:child}
Let $T\in\idt$ and let $u$ be a vertex of $T$.

\begin{itemize}
    \item If $u$ is even, then the children of $u$ in $\hat T$
    are exactly the right-to-left minima of
    $\pw(\hat T_u)$.
    
    \item If $u$ is odd and $u\notin\RR$, then the children of $u$
    in $\hat T$ are exactly the right-to-left maxima of
    $\pw(\hat T_u)$.
    
    \item If $u\in\RR$, write
    \[
    \pw(\hat T_u)=w_1w_2\cdots w_m u,
    \]
    and let $w_i$ be the first right-to-left letter in the word
    $w_1w_2\cdots w_m$ that belongs to $\RR$.
    Then:
    \begin{itemize}
        \item the children of $u$ belonging to $\RR$
        are exactly the right-to-left minima in
        $w_1\cdots w_i$ which belongs to $\RR$;

        \item the remaining children are exactly the
        right-to-left maxima in
        $w_{i+1}\cdots w_m$.
    \end{itemize}
\end{itemize}

The analogous statements also hold for $\tilde T$
whenever $V(T)\cap\RR=\emptyset$. Moreover, $\hat T_u$ denotes the rooted subtree of $\hat T$ with root $u$.
\end{lemma}

\begin{proof}
We prove the statements for $\hat T$. The proof for $\tilde T$
is completely analogous.

\medskip

First suppose that $u$ is an even vertex.
Then all children of $u$ are odd nodes that do not belong
to $\RR$, and they are arranged in increasing order from
left to right.
Let
\[
v_1<\cdots <v_k
\]
be the children of $u$ in $\hat T$.

For each $1\le j\le k$, write
\[
\pw(\hat T_{v_j})=\alpha_j v_j.
\]
Then
\[
\pw(\hat T_u)
=
\alpha_1 v_1
\alpha_2 v_2
\cdots
\alpha_k v_k u.
\]

Since each $v_j$ is odd and does not belong to $\RR$,
it is smaller than all of its descendants.
Hence, for $i<j$, the vertex $v_i$ is smaller than every
letter appearing in the subtree rooted at $v_j$.
Therefore, $v_i$ is smaller than every letter appearing
to its right in $\pw(\hat T_u)$, so each child of $u$
is a right-to-left minimum of $\pw(\hat T_u)$.

Conversely, suppose that $v$ is not a child of $u$.
Then $v$ belongs to the subtree rooted at some child
$v_j$, and $v\neq v_j$.
Since $v_j$ is odd and does not belong to $\RR$,
it is smaller than all of its descendants. Thus
\[
v_j<v.
\]
Moreover, $v_j$ appears to the right of $v$ in the
postorder word $\pw(\hat T_u)$.
Hence there exists a smaller letter to the right of $v$,
so $v$ is not a right-to-left minimum.
Therefore, the children of $u$ are exactly the
right-to-left minima of $\pw(\hat T_u)$.

\medskip

Now suppose that $u$ is odd and $u\notin\RR$.
Then all children of $u$ are even and are arranged in
decreasing order from left to right.
Let
\[
v_1>\cdots >v_k
\]
be the children of $u$ in $\hat T$.

For each $1\le j\le k$, write
\[
\pw(\hat T_{v_j})=\alpha_j v_j.
\]
Then
\[
\pw(\hat T_u)
=
\alpha_1 v_1
\alpha_2 v_2
\cdots
\alpha_k v_k u.
\]

Since each $v_j$ is even, it is larger than all of its
descendants.
Hence, for $i<j$, the vertex $v_i$ is larger than every
letter appearing in the subtree rooted at $v_j$.
Therefore, $v_i$ is larger than every letter appearing
to its right in $\pw(\hat T_u)$, so each child of $u$
is a right-to-left maximum of $\pw(\hat T_u)$.

Conversely, suppose that $v$ is not a child of $u$.
Then $v$ belongs to the subtree rooted at some child
$v_j$, and $v\neq v_j$.
Since $v_j$ is even, it is larger than all of its
descendants. Thus
\[
v_j>v.
\]
Moreover, $v_j$ appears to the right of $v$ in the
postorder word $\pw(\hat T_u)$.
Hence there exists a larger letter to the right of $v$,
so $v$ is not a right-to-left maximum.
Therefore, the children of $u$ are exactly the
right-to-left maxima of $\pw(\hat T_u)$.

\medskip

Finally, suppose that $u\in\RR$.
Then the children of $u$ consist of nodes in $\RR$
and even nodes.
In $\hat T$, the children belonging to $\RR$
are arranged in increasing order from left to right,
followed by the even children arranged in decreasing order.

Let
\[
v_1,\ldots,v_i
\]
be the children of $u$ belonging to $\RR$, and let
\[
v_{i+1},\ldots,v_k
\]
be the even children of $u$.
Then
\[
v_1<\cdots <v_i
\qquad\text{and}\qquad
v_{i+1}>\cdots >v_k.
\]

For each $1\le j\le k$, write
\[
\pw(\hat T_{v_j})=\alpha_j v_j.
\]
Then
\[
\pw(\hat T_u)
=
\alpha_1 v_1
\cdots
\alpha_i v_i
\alpha_{i+1} v_{i+1}
\cdots
\alpha_k v_k u.
\]

We first consider the children belonging to $\RR$.
Since $v_a<v_b$ for all $1 \le a<b\le i$, and every descendant
of $v_b$ is at least $v_b$, it follows that
$v_a$ is smaller than every letter appearing to its right
inside
\[
\alpha_1 v_1\cdots \alpha_i v_i.
\]
Hence each $v_a$ is a right-to-left minimum in this word.

Conversely, if a vertex appearing in
\[
\alpha_1 v_1\cdots \alpha_i v_i
\]
is not one of the children $v_1,\ldots,v_i$,
then it lies in the subtree rooted at some $v_b$ and is
strictly larger than $v_b$.
Since $v_b$ appears to its right in the postorder word,
that vertex cannot be a right-to-left minimum.
Therefore, the children of $u$ belonging to $\RR$
are exactly the right-to-left minima in
\[
\alpha_1 v_1\cdots \alpha_i v_i.
\]

Now consider the even children
\[
v_{i+1},\ldots,v_k.
\]
Since $v_a>v_b$ for $i+1\le a<b\le k$, and every
descendant of $v_b$ is at most $v_b$, it follows that
$v_a$ is larger than every letter appearing to its right
inside
\[
\alpha_{i+1} v_{i+1}\cdots \alpha_k v_k.
\]
Hence each $v_a$ is a right-to-left maximum in this word.

Conversely, if a vertex appearing in
\[
\alpha_{i+1} v_{i+1}\cdots \alpha_k v_k
\]
is not one of the children $v_{i+1},\ldots,v_k$,
then it lies in the subtree rooted at some even child
$v_b$ and is strictly smaller than $v_b$.
Since $v_b$ appears to its right in the postorder word,
that vertex cannot be a right-to-left maximum.
Therefore, the remaining children are exactly the
right-to-left maxima in
\[
\alpha_{i+1} v_{i+1}\cdots \alpha_k v_k.
\]

The proof for $\tilde T$ is completely analogous.
\end{proof}

\begin{lemma}\label{lem:BijWithW}
    The map $ \pw: \{\hat T : T \in \idt\} \cup \{\tilde T :  T \in \idt \; \text {and} \; V(T) \cap \RR = \emptyset \} \to \mathcal W $ is a bijection
\end{lemma}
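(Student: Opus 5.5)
Since the preceding lemma already shows that $\pw$ maps into $\mathcal W$, it remains to show that $\pw$ is injective and surjective. The cleanest route is to construct an explicit inverse $\Phi:\mathcal W\to\{\hat T\}\cup\{\tilde T\}$ by recursion on word length, guided by Lemma~\ref{lem:child}. Then we check that $\Phi\circ\pw$ and $\pw\circ\Phi$ are identities.

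\emph{Construction of $\Phi$.} Let $w=w_1\cdots w_m\in\mathcal W$. If $m=1$, let $\Phi(w)$ be the one-node tree. Otherwise let $r=w_m$ be the root and split $w'=w_1\cdots w_{m-1}$ according to the type of $r$.
\begin{itemize}
\item If $r$ is even, take the right-to-left minima $v_1,\dots,v_k$ of $w'$. Cut $w'$ after each $v_j$ to get factors $\alpha_jv_j$, and attach $\Phi(\alpha_jv_j)$ as the $j$-th child subtree.
\item If $r$ is odd and $r\notin\RR$, do the same using right-to-left maxima.
\item If $r\in\RR$, let $w_i$ be the rightmost letter of $w'$ lying in $\RR$. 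Take the $\RR$-right-to-left minima of $w_1\cdots w_i$ and the right-to-left maxima of $w_{i+1}\cdots w_{m-1}$, and cut at each of these letters.
\end{itemize}

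The main work is to show that each factor $\alpha_jv_j$ is again in $\mathcal W$, ending in the correct type of letter, so that the recursion is well defined and produces an $\RR$-DID tree. Conditions (1) and (2) of $\mathcal W$ are inherited by factors, so only conditions (3) and (4) on the last letter must be checked.

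For $r$ even (then $w'$ avoids $\RR$ by condition (4)): the last letter of $w'$ is a right-to-left minimum, and it must be odd. Indeed, an even letter $w_{m-1}$ would need $w_{m-1}>r$, contradicting that $r$ is the maximum. Moreover $v_j$ is the minimum of its factor by construction. One must check that each $v_j$ is odd. If $v_j$ were even, condition (2) gives $v_j>w_{\ell+1}$ for the next letter; that letter is either $v_{j+1}$ or lies in its factor and exceeds $v_{j+1}>v_j$, a contradiction.

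The odd non-$\RR$ case is symmetric. Here the maxima must be even, and each factor's last letter is its maximum. We also need that factors avoid $\RR$. This holds because a letter of $\RR$ inside a factor ending in the even $v_j$ would have to be smaller than everything except the root; I would argue this via condition (3) for $w$ and the assumption that $\RR$ consists of the smallest odd numbers.

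The $\RR$ case is the main obstacle. We must show that, in $w_1\cdots w_i$, the cut points are exactly the $\RR$-letters that are right-to-left minima, so that each factor ends in an $\RR$-letter that is its minimum. We must also show that $w_{i+1}\cdots w_{m-1}$ contains no $\RR$-letter and ends in an even letter; both use condition (1) together with the maximality of $i$.

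Each child must also have the correct type relative to its parent. Under $\Phi$ the child orderings reproduce exactly those of $\hat T$, the ordering being increasing for minima and decreasing for maxima. The $\RR$-DID conditions hold because roots of factors are minima or maxima of their subtrees and have the correct parity.

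Finally, $\pw(\Phi(w))=w$ holds by construction, since postorder concatenates the factors and then appends the root. In the other direction, $\Phi(\pw(\hat T))=\hat T$ by Lemma~\ref{lem:child}, applied by induction on $|T|$: the children and subtrees recovered by $\Phi$ are exactly those of $\hat T$. The tree is $\tilde T$ exactly when the last letter is even, which occurs precisely when $V(T)\cap\RR=\emptyset$ and the tree is rooted at its maximum. Hence the two families in the domain have disjoint images, and $\pw$ is a bijection.
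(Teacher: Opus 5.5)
Your proposal is correct and follows essentially the same route as the paper. The paper also builds a recursive inverse $\gamma$ by cutting $w'$ at right-to-left minima (even last letter), at right-to-left maxima (odd last letter not in $\RR$), or, when the last letter lies in $\RR$, at the right-to-left minima of the prefix ending at the rightmost $\RR$-letter and the right-to-left maxima of the remaining suffix; it then checks that each factor lies in $\mathcal W$ and verifies the compositions by induction using Lemma~\ref{lem:child}, with the only difference being emphasis (you note $\pw\circ\Phi=\mathrm{id}$ is immediate from the construction, while the paper details $\gamma\circ\pw=\mathrm{id}$ and calls the other direction analogous, and your closing remark about disjoint images is not needed).
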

\begin{proof}
To show that $\pw$ is a bijection, we construct a recursive map
\[
\gamma:\mathcal W\to
\{\hat T:T\in\idt\}
\cup
\{\tilde T:T\in\idt \text{ and }V(T)\cap\RR=\emptyset\}
\]
and show that it is inverse to $\pw$.

Let
\[
w=w_1\cdots w_m\in\mathcal W.
\]

If $m=1$, define $\gamma(w)$ to be the tree consisting of
the single vertex $w_1$.

Now suppose $m>1$.

\medskip

\noindent{\bf Case 1: $w_m$ is even.}

Then, by condition~(4), $w_m$ is the largest letter of $w$
and no letter of
\[
w'=w_1\cdots w_{m-1}
\]
belongs to $\RR$.

Let
\[
w_{j_1}<\cdots <w_{j_k}
\]
be the right-to-left minima of $w'$. Note that $j_k = m-1$.

For each $i=1,\ldots,k$, define
\[
\alpha_i(w)
=
w_{j_{i-1}+1}\cdots w_{j_i},
\qquad j_0=0.
\]

Then
\[
w'
=
\alpha_1(w)\cdots\alpha_k(w).
\]

Since each $w_{j_i}$ is a right-to-left minimum of $w'$,
it cannot be even, because every even letter $w_i$
satisfies
\[
w_i>w_{i+1},
\]
so $w_i$ has a smaller letter to its right.
Moreover, condition~(4) in the definition of $\mathcal W$
implies that no letter of $w'$ belongs to $\RR$.
Hence each $w_{j_i}$ is odd and does not belong to $\RR$.

Since for all $i$,  $\alpha_i(w)$  is a segment of $w \in \mathcal W$, it satisfies conditions (1) and (2) of the definition of $\mathcal W$.

Moreover, it is the smallest letter of $\alpha_i(w)$.
Hence each $\alpha_i(w)$ satisfies condition~(3) and condition (4) holds vacuously.
Therefore $\alpha_i(w) \in \mathcal W$.

Now, for each $i$, we can recursively apply $\gamma$ to  $\alpha_i(w)$ to obtain a plane tree  $ \gamma(\alpha_i(w))$  in $\{\tilde T : T \in \idt\}$.  Let $\gamma(w)$ be the plane tree constructed as follows: 

\begin{enumerate}
\item The root is $w_m$, which is even and largest.
\item The children of $w_m$ are the right-to-left minima $w_{j_1}< \cdots <w_{j_k}$, which are odd and does not belongs to $\RR$.
\item The subtree rooted at $w_{j_i}$ is $ \gamma(\alpha_i(w))$ for each $i$.
\end{enumerate}
Clearly $\gamma(w) \in  \{\tilde T : T \in \idt \; \text{and} \; V(T) \cap \RR = \emptyset\}$.

\medskip
\noindent{\bf Case 2: $w_m$ is odd and $w_m\notin\RR$.}

An analogous argument allows us to decompose $w = w_1 \cdots w_{m-1} \in {\mathcal W}$ into  segments $\alpha_i(w) \in \mathcal W$, whose last letter is the $i$-th  right--to-left maximum of $w$. We define  $\gamma(w)$ to be the plane tree constructed as follows: 
\begin{enumerate}
\item The root is $w_m$, which is odd.
\item The children of $w_m$ are the right-to-left maxima $w_{j_1}> \cdots > w_{j_k}$, which are even
\item The subtree rooted at $w_{j_i}$ is $\gamma(\alpha_i(w))$ for each $i$.
\end{enumerate} 
We have $\gamma(w) \in  \{\tilde T : T \in \idt\}$.

\medskip
\noindent{\bf Case 3: $w_m$ is odd and $w_m\in\RR$.}

Again, $w_m$ is the smallest letter of $w$.

Let
\[
w_i
\]
be the first right-to-left word of $w'$ that belongs to $\RR$.

By Lemma~\ref{lem:child}, the children of a node in $\RR$
consist first of the right-to-left minima of the word segment $w_1\ldots w_i$ belonging to $\RR$,
followed by the right-to-left maxima in the remaining suffix.

Let
\[
w_{j_1}<\cdots <w_{j_t}
\]
be the right-to-left minima in
\[
w_1\cdots w_i
\]
and let
\[
w_{j_{t+1}}> \cdots >w_{j_{t+s}}
\]
be the right-to-left maxima in
\[
w_{i+1}\cdots w_{m-1}.
\]

Note that, $j_r = i$ and $t+s = m-1$.
For convenience, relabel these nodes as
\[
v_1,\ldots,v_t,v_{t+1},\ldots,v_{t+s},
\]
where
\[
v_1<\cdots <v_t
\]
are the right-to-left minima belonging to $\RR$, and
\[
v_{t+1}>\cdots >v_{t+s}
\]
are the right-to-left maxima in
\[
w_{i+1}\cdots w_{m-1}.
\]

For each $i=1,\ldots,t+s$, define
\[
\alpha_i(w)
=
w_{j_{i-1}+1}\cdots v_i,
\qquad j_0=0.
\]

Then
\[
w'
=
\alpha_1(w)\cdots\alpha_{t+s}(w).
\]

Since for all $i$, the word $\alpha_i(w)$ is a segment of
$w\in\mathcal W$, it satisfies conditions~(1) and~(2)
in the definition of $\mathcal W$.

For $1\le i\le r$, the last letter of $\alpha_i(w)$ belongs
to $\RR$ and is the smallest letter of $\alpha_i(w)$,
since it is a right-to-left minimum.
Hence condition~(3) holds.

For $r+1\le i\le r+s$, the last letter of $\alpha_i(w)$
is even and largest in $\alpha_i(w)$, since it is a
right-to-left maximum.
Moreover, no preceding letter of $\alpha_i(w)$ belongs
to $\RR$, because all letters in $\RR$ occur in
\[
w_1\cdots w_i.
\]
Hence condition~(4) holds.

Therefore $\alpha_i(w)\in\mathcal W$ for all $i$.

Now, for each $i$, we recursively apply $\gamma$ to
$\alpha_i(w)$ to obtain a plane tree
$\gamma(\alpha_i(w))$.

Let $\gamma(w)$ be the plane tree constructed as follows:
\begin{enumerate}
\item The root is $w_m$, which is odd and belongs to $\RR$.

\item The first children of $w_m$ are the right-to-left
minima
\[
v_1<\cdots <v_r
\]
belonging to $\RR$.

\item The remaining children of $w_m$ are the
right-to-left maxima
\[
v_{r+1}>\cdots >v_{r+s},
\]
which are even.

\item The subtree rooted at $v_i$ is
$\gamma(\alpha_i(w))$ for each $i$.
\end{enumerate}

Clearly $\gamma(w)\in\{\hat T:T\in\idt\}$.

Now that we have shown that $\gamma$ is well-defined, it remains to check that the maps $\pw$ and $\gamma$ are inverses of each other. We  prove $\gamma(\pw(\hat T) )= \hat T$  for all $T \in \idt$ and $\gamma(\pw(\tilde T) )= \tilde T$,
for all $T \in \idt$ and $V(T) \cap \RR = \emptyset$, by induction on $|T|$. If $|T| = 1$, this is clear, so suppose $|T| >1$. 

{\bf Case 1:}  $\tilde T$. By Lemma~\ref{lem:child}, the children $ v_1<v_2 <\dots < v_k$ of the root $r$ of $\tilde T$ are exactly the right-to-left minima of   the word $\pw(\tilde T)^\prime = \pw(\hat T_1) \cdot \pw(\hat T_2) \cdot \,\, \cdots \,\, \cdot  \pw(\hat T_k) $, where $\hat T_i$ is the subtree of $\tilde T$ rooted at $v_i$.
Since $v_i$ is the last letter of $\pw(\hat T_i)$, we have $\alpha_i( \pw(\tilde T)) = \pw(\hat T_i )$ for all $i$.  
By induction, $\gamma(\alpha_i( \pw(\tilde T))) =  \gamma( \pw(\hat T_i )) = \hat T_i .$

Now  we have that $\gamma ( \pw(\tilde T))$ is the  plane tree whose root is the last letter of $ \pw(\tilde T)$, which is $r$.  The children of $r$ are the right-to-left minima of $\pw(\tilde T)^\prime$ which are $v_1<\dots< v_k$.  The subtree rooted at $v_i$ is $ \gamma(\alpha_i( \pw(\tilde T)))$ which is $\hat T_i$.  Hence  $\gamma ( \pw(\tilde T)) = \tilde T$, as desired.    

 {\bf Case 2:}  $\hat T$ and a root $r \notin \RR$. A completely analogous argument gives $ \gamma ( \pw(\hat T)) = \tilde T$.

\medskip
{\bf Case 3:} $\hat T$ with root $r\in\RR$.

Let
\[
v_1<\cdots <v_t
\]
be the children of $r$ belonging to $\RR$, and let
\[
v_{t+1}>\cdots >v_{t+k}
\]
be the even children of $r$.

For $1\le i\le t$, let $\hat T_i$ denote the subtree of
$\hat T$ rooted at $v_i$.
For $t+1\le i\le t+k$, let $\tilde T_i$ denote the subtree
of $\hat T$ rooted at $v_i$.
Note that the nodes of each $\tilde T_i$ do not belong
to $\RR$.

By Lemma~\ref{lem:child}, the nodes
\[
v_1<\cdots <v_t
\]
are exactly the right-to-left minima belonging to $\RR$
in the word
\[
\pw(\hat T_1)\cdots \pw(\hat T_t),
\]
and the nodes
\[
v_{t+1}>\cdots >v_{t+k}
\]
are exactly the right-to-left maxima in the word
\[
\pw(\tilde T_{t+1})\cdots \pw(\tilde T_{t+k}).
\]

Moreover, since each $v_i$ is the root of its subtree,
it is the last letter of the corresponding postorder word.
Hence the decomposition determined in the construction of
$\gamma$ satisfies
\[
\alpha_i(\pw(\hat T))
=
\pw(\hat T_i),
\qquad 1\le i\le t,
\]
and
\[
\alpha_i(\pw(\hat T))
=
\pw(\tilde T_i),
\qquad t+1\le i\le t+k.
\]

By induction,
\[
\gamma(\alpha_i(\pw(\hat T)))
=
\hat T_i,
\qquad 1\le i\le t,
\]
and
\[
\gamma(\alpha_i(\pw(\hat T)))
=
\tilde T_i,
\qquad t+1\le i\le t+k.
\]

Now consider the tree
\[
\gamma(\pw(\hat T)).
\]
Its root is the last letter of $\pw(\hat T)$, namely $r$.
Its first children are the right-to-left minima belonging
to $\RR$,
\[
v_1<\cdots <v_t,
\]
followed by the right-to-left maxima
\[
v_{t+1}>\cdots >v_{t+k}.
\]

The subtree rooted at each $v_i$ is
\[
\gamma(\alpha_i(\pw(\hat T))),
\]
which equals $\hat T_i$ for $1\le i\le t$ and equals
$\tilde T_i$ for $t+1\le i\le t+k$.

Therefore,
\[
\gamma(\pw(\hat T))
=
\hat T.
\]

Hence $ \gamma \circ \pw$ is the identity map on $\{\hat T : T \in \idt\} \cup 
\{\tilde T : T \in \idt \; \text{and} \; V(T) \cap \RR = \emptyset\}$.  A similar argument can be used to prove that 
the other composition $ \pw \circ \gamma$ equals the identity map on  $\mathcal W$.
\end{proof}

\begin{example}

Let \(A=[8]\) and \(\mathcal{R}=\{1,3,5\}\). Suppose \(w=36578421\), which satisfies the definition of \(\mathcal{W}\). The smallest letter is \(1\), and it belongs to \(\mathcal{R}\). In this case, \(5\) is the first right-to-left letter of \(\mathcal{R}\) in the subword \(3657842\).

The children of \(1\) are the right-to-left maxima of \(842\), namely \(8\), \(4\), and \(2\), together with the right-to-left minima of \(365\), namely \(3\) and \(5\).

In the segment \(65\), the letter \(5\) is the minimum element, and it belongs to \(\mathcal{R}\). This segment does not contain any other letter of \(\mathcal{R}\) besides \(5\). The child of \(5\) is the even right-to-left maximum of \(6\), namely \(6\).

In the segment \(78\), the letter \(8\) is the maximum element, and it is even. This segment does not contain any other letter of \(\mathcal{R}\). The child of \(9\) is the even right-to-left minimum of \(8\), namely \(7\).

Hence, the word \(w\) is mapped to the following plane tree $\hat T$.

\begin{center}
\begin{tikzpicture}[scale=0.7]
\tikzstyle{node} = [circle, draw, fill=blue!20, minimum size=1mm]
    \node[node] (node1) at (0,0) {3};
    \node[node] (node2) at (2,0) {5};
    \node[node] (node3) at (4,0) {8};
    \node[node] (node4) at (6,0) {4};
    \node[node] (node5) at (8,0) {2};
     \node[node] (node6) at (4,2) {1};
    \node[node] (node7) at (4,-2) {7};
    \node[node] (node8) at (2,-2) {6};

    \draw (node1)--(node6);
    \draw (node2)--(node6);
    \draw (node3)--(node6);
    \draw (node4)--(node6);
    \draw (node5)--(node6);
    \draw (node7)--(node3);
    \draw (node8)--(node2);
    
\end{tikzpicture}
\end{center}

\end{example}

For any finite subset $A$ of $\mathbb{Z}_{>0}$, consider $\RR=\{1,3,5,\ldots,m\}\cap A$, where $m\ge 1$ is any odd number. Let $\idt_A=\{T\in\idt : \text{the node set of }T\text{ is }A\}$ and let $\mathcal W_A=\{w\in\mathcal W : \text{the set of letters of }w\text{ is }A\}$, where $\idt$ is the set of $\RR$-DID trees on $A$ and $\RR\mathcal{DC}_A$ is the set of $\RR$-D-cycles on $A$.

\begin{theorem}\label{them:R-deformedBij}
For any finite suubset $A$ of $\Z_{> 0}$ consider $\RR=\{1,3,5,\ldots,m\}\cap A$, where $m\ge 1$ is any odd number. Let $\psi:\idt_A \to \RR\mathcal{DC}_{A}$ be the map defined by letting $\psi(T)$ be the cycle $(\pw(\hat T))$ in $\sg_A$.  Then $\psi$ is a well defined bijection.  Consequently $|\idt_A| = |\RR\mathcal{DC}_{A}|$.
\end{theorem}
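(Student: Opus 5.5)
We will follow the Lazar--Wachs strategy: reduce everything to the word bijection of Lemma~\ref{lem:BijWithW}, restricted to words with letter set $A$ whose last letter is the smallest letter. Set $\mathcal W_A^{\min}=\{w\in\mathcal W_A : w_m=\min A\}$. For $T\in\idt_A$ the word $\pw(\hat T)$ ends at the root of $\hat T$, which is $\min A$. Conversely, the recursive inverse $\gamma$ sends a word ending in its minimum to a tree of the form $\hat T$; this is Cases 2 and 3 of the construction, since Case 1 only occurs when the last letter is even and maximal. So Lemma~\ref{lem:BijWithW} restricts to a bijection $T\mapsto \pw(\hat T)$ from $\idt_A$ onto $\mathcal W_A^{\min}$. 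It therefore suffices to show that $w\mapsto (w)$, reading $w$ as a cycle in $\sg_A$, is a bijection from $\mathcal W_A^{\min}$ onto $\RR\mathcal{DC}_A$.

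\emph{Well-definedness.} Let $w=w_1\cdots w_m\in\mathcal W_A^{\min}$ and $\sigma=(w_1\,w_2\,\cdots\,w_m)$, so that $\sigma(w_i)=w_{i+1}$ for $i<m$ and $\sigma(w_m)=w_1$. For $i<m$, conditions (1) and (2) of $\mathcal W$ give $w_i<\sigma(w_i)$ when $w_i$ is odd and not in $\RR$, and $w_i>\sigma(w_i)$ when $w_i$ is even. It remains to check the wrap-around letter $w_m=\min A$.
\begin{itemize}
\item If $|A|=1$, then $\sigma$ is a fixed point and the D-conditions hold with equality.
\item If $|A|>1$, the minimum must be odd, since an even last letter forces Case 1, where it is the maximum. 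Then $w_m\le w_1=\sigma(w_m)$ holds automatically. (When $w_m\in\RR$ there is no constraint at all.)
\end{itemize}
Thus $\sigma$ is an $\RR$-D-cycle on $A$.

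\emph{Inverse.} Given an $\RR$-D-cycle $\sigma$ on $A$, write it uniquely as $(w_1\cdots w_m)$ with $w_m=\min A$; this is the only rotation ending in the minimum. We must verify $w\in\mathcal W_A^{\min}$.
\begin{itemize}
\item Conditions (1) and (2) for $i<m$ are exactly the $\RR$-D-conditions at $w_i$, with strict inequalities because $\sigma$ has no fixed points when $m>1$.
\item Condition (3) holds if $w_m$ is odd.
\item The remaining point is that $w_m$ cannot be even when $m>1$. If it were, then $\sigma(w_m)\le w_m$, and $w_m$ is the minimum, so $\sigma(w_m)=w_m$, a contradiction.
\end{itemize}
The two maps are clearly mutually inverse: rotating a cycle to end at the minimum recovers $w$.

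\emph{Main obstacle.} The delicate step is the first one: checking carefully that $\gamma$ maps $\mathcal W_A^{\min}$ into $\{\hat T\}$ rather than into the $\tilde T$ family, and that the restriction is onto $\idt_A$. The only overlap between the two families is trees with no $\RR$-vertex, where $\hat T$ and $\tilde T$ are different plane trees with different last letters. Hence the last letter of the word determines which family the tree lies in, and the restriction is indeed a bijection. Composing the two bijections gives $\psi$, and hence $|\idt_A|=|\RR\mathcal{DC}_A|$.
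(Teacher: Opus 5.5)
Your proposal is correct and follows the same route as the paper. Both restrict the postorder-word bijection of Lemma~\ref{lem:BijWithW} to words whose last letter is the smallest letter, and then identify those words with $\RR$-D-cycles via $w\mapsto (w)$. Your treatment of the last step is more explicit than the paper's one-line justification: you handle the wrap-around letter, get strictness from the absence of fixed points, and rule out an even minimum when $|A|>1$. One small imprecision: $\hat T$ and $\tilde T$ differ only when $|T|>1$, but this is harmless since they coincide for singletons.
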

\begin{proof}
Consider the following sequence of maps:

\begin{enumerate}
    \item The map
    \[
    \idt_A \longrightarrow \{\hat T : T\in \idt_A\},
    \qquad
    T\longmapsto \hat T.
    \]

    \item The map
    \[
    \{\hat T : T\in \idt_A\}
    \longrightarrow
    \{\, w\in \mathcal W_A : \text{the last letter of } w \text{ is smallest}\,\},
    \]
    defined by
    \[
    \hat T \longmapsto \pw(\hat T).
    \]

    \item The map
    \[
    \{\, w\in \mathcal W_A : \text{the last letter of } w \text{ is smallest}\,\}
    \longrightarrow
    \mathcal R\mathcal{DC}_A,
    \]
    defined by
    \[
    w\longmapsto (w).
    \]
\end{enumerate}

The first map is clearly a bijection. By Lemma~\ref{lem:BijWithW}, the second map is also a bijection. The third map is well defined and bijective because the last letter of $w$ is necessarily smallest odd, and hence determines an element of $\mathcal R\mathcal{DC}_A$.

Since $\psi$ is the composition of these three bijections, it follows that $\psi$ is itself a bijection.
\end{proof}

The {\em cycle support} of $\sigma \in \SSS_n$ is the partition $\cyc(\sigma) \in \Pi_n$ whose blocks are the elements of the cycles of $\sigma$. For example,
$$\cyc((1,2)(5)(5,7,6,3)) = 12|5|3675.$$
The bijection in Theorem \ref{them:R-deformedBij} extends to a bijection between $\RR$-DID forests and $\RR$-D-permutations. Under this bijection, the blocks of the cycle support of the image of an $\RR$-DID forest are exactly the node sets of the trees in the forest. Thus we obtain the following consequence of Theorem \ref{them:R-deformedBij}.

\begin{corollary}\label{Cor:R-deformedBij} 
Let $\pi$ be a partition of a finite subset of  $\Z_{>0}$.  Then the $\RR$-DID forests whose trees have node sets equal to the blocks of $\pi$ are in bijection with the $\RR$-D-permutations  whose cycle support is $\pi$.
\end{corollary}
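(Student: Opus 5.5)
The plan is to apply Theorem~\ref{them:R-deformedBij} blockwise and glue the results together. Fix a partition $\pi = B_1|\cdots|B_k$ of a finite set $A\subset \Z_{>0}$. Let $\RR$ be the fixed set $\{1,3,\ldots,m\}$, and for each block put $\RR_i=\RR\cap B_i$.

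\textbf{Forests side.} An $\RR$-DID forest whose trees have node sets $B_1,\ldots,B_k$ is the same thing as a $k$-tuple $(T_1,\ldots,T_k)$ with $T_i\in\idt_{B_i}$. This holds because the $\RR$-DID condition is local to each tree. Rooting a tree at its smallest vertex and checking conditions (i)--(iii) involves only the vertices of that tree. Moreover, membership in $\RR$ of a vertex $v\in B_i$ is decided by $v\in\RR_i=\{1,3,\ldots,m\}\cap B_i$. So the relevant forests are in bijection with $\prod_i \idt_{B_i}$.

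\textbf{Permutations side.} An $\RR$-D-permutation $\sigma$ of $A$ with $\cyc(\sigma)=\pi$ decomposes uniquely as a product of disjoint cycles $c_1\cdots c_k$, where $c_i$ is a cyclic permutation of $B_i$. The defining conditions are pointwise: $i\le\sigma(i)$ for odd $i\notin\RR$, and $i\ge\sigma(i)$ for even $i$. Since $\sigma(j)=c_i(j)$ for $j\in B_i$, the permutation $\sigma$ is an $\RR$-D-permutation exactly when each $c_i$ is an $\RR$-D-cycle on $B_i$, that is, $c_i\in\RR\mathcal{DC}_{B_i}$. Hence these permutations are in bijection with $\prod_i \RR\mathcal{DC}_{B_i}$.

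\textbf{Gluing.} Define $\Psi(T_1,\ldots,T_k)=\psi(T_1)\cdots\psi(T_k)$, where $\psi$ on each block is the bijection $\idt_{B_i}\to\RR\mathcal{DC}_{B_i}$ of Theorem~\ref{them:R-deformedBij}, applied with $A=B_i$. The factors act on disjoint supports, so they commute and the product is well defined. The inverse map factors $\sigma$ into its cycles and applies $\psi^{-1}$ to each. Consequently the blocks of $\cyc(\Psi(F))$ are exactly the node sets of the trees of $F$, as claimed.

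\textbf{Main obstacle.} The only delicate point is that Theorem~\ref{them:R-deformedBij} is stated with $\RR=\{1,3,\ldots,m\}\cap A$, while here it is applied to a subset $B_i$. We need the induced set $\RR\cap B_i$ to be the one that both the DID condition and the D-cycle condition on $B_i$ actually use. Both definitions refer to membership of individual elements in the fixed set $\{1,3,\ldots,m\}$, so this compatibility is immediate. Nothing beyond the theorem applied block by block is needed.
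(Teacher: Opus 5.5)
Your proposal is correct and follows the same route as the paper. The paper simply remarks that the tree-to-cycle bijection $\psi$ of Theorem~\ref{them:R-deformedBij} extends blockwise to forests and permutations, with the node sets of the trees becoming the cycle supports. Your version spells out what the paper leaves implicit: the $\RR$-DID and $\RR$-D conditions are local to each block, and $\RR\cap B_i=\{1,3,\ldots,m\}\cap B_i$ satisfies the theorem's hypothesis when it is applied with $A=B_i$.
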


As a consequence of Theorem \ref{thm: char-poly-DID} and Corollary \ref{Cor:R-deformedBij}, we have Theorem \ref{them: coeff to cycles}.

\section{Combinatorial statistics for $\shi_{2n-1}$}\label{sec5}
In this section, we find the characteristic polynomial of the arrangement $\shi_{2n-1}$, i.e., of $\lin_{2n-3}(G)$, when $G$ is the complete graph $K_n$ and find the number of regions therefrom. Also, we find some combinatorial statistics for the coefficients of the characteristic polynomial for this arrangement by establishing a one-one correspondence between the collection of regions defined by the hyperplanes and the set $\SSS_n\times \SSS_n$.

\subsection{Characteristic polynomial and number of regions of $\shi_{2n-1}$}

\begin{theorem}
The characteristic polynomial of the arrangement $\shi_{2n-1}$ is 
$$\chi_{\shi_{2n-1}}(t)=t\prod_{i=1}^{n-1}(t-i)^2.$$
\end{theorem}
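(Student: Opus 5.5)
The plan is to reduce the characteristic polynomial to a chromatic polynomial, which can then be computed by a greedy colouring count. By Theorem~\ref{thm: poset isomorphism for a general graph} with $G=K_n$, the linear isomorphism $\psi$ of $\R^{2n-1}$ carries the graphical arrangement $\mathcal A_{\Gamma_{K_n}}$ onto $\shi_{2n-1}=\lin_{2n-3}(K_n)$. A linear isomorphism of the ambient space preserves the intersection lattice, the ranks and the ambient dimension. Hence $\chi_{\shi_{2n-1}}(t)=\chi_{\mathcal A_{\Gamma_{K_n}}}(t)$, both polynomials being taken in the convention where the exponent is the dimension of the flat.

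For a graphical arrangement in $\R^{V}$ it is standard that the characteristic polynomial is the chromatic polynomial of the graph, i.e.\ $\chi_{\mathcal A_\Gamma}(t)=\ch_\Gamma(t)=t\,\chi_{\Pi_\Gamma}(t)$ by Whitney's formula~(\ref{bond-lattice-chromatic-poly}). Here $|V|=2n-1$. So it suffices to show that
\[
\ch_{\Gamma_{K_n}}(t)=t\prod_{i=1}^{n-1}(t-i)^2 .
\]
The only point needing care is this normalization: the arrangement is not essential, and its one-dimensional lineality space accounts for the factor $t$. I expect this bookkeeping to be the main (though mild) obstacle, and I will check it by noting that $\Gamma_{K_n}$ is connected, so the rank is $2n-2$ and the leading factor $t^{1}$ appears.

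Next I will describe $\Gamma_{K_n}$ explicitly from its definition. Since every pair $\{i,j\}$ is an edge of $K_n$, all odd vertices $1,3,\dots,2n-1$ are pairwise adjacent, forming a clique $K_n$. An even vertex $2j$ ($1\le j\le n-1$) is adjacent exactly to the odd vertices $2i-1$ with $i\le j$, namely $1,3,\dots,2j-1$. These are $j$ vertices lying inside the odd clique, and there are no even--even edges.

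Finally, I will count proper $t$-colourings greedily. First colour the odd clique, which can be done in $t(t-1)\cdots(t-n+1)=t\prod_{i=1}^{n-1}(t-i)$ ways. Then colour each even vertex $2j$ independently. Its neighbours form a clique of size $j$ and therefore carry $j$ distinct colours, leaving exactly $t-j$ choices. The even vertices are pairwise non-adjacent, so their choices do not interact. Multiplying gives $t\prod_{i=1}^{n-1}(t-i)\cdot\prod_{j=1}^{n-1}(t-j)$, which is the claimed formula. This is a perfect elimination ordering argument for a chordal graph.
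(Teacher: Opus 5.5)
Your proof is correct and follows essentially the paper's route: you use Theorem~\ref{thm: poset isomorphism for a general graph} to reduce to the chromatic polynomial of $\Gamma_{K_n}$, then colour the odd clique first and give each even vertex $2j$ exactly $t-j$ choices. The only difference is bookkeeping: the paper appends an isolated vertex $\beta_n$ and divides by $t$ at the end, whereas you apply $\chi_{\mathcal A_\Gamma}=\ch_\Gamma$ directly to the connected graph $\Gamma_{K_n}$, which is the cleaner normalization.
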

\begin{proof}
Let $p>2n-1$ be a large prime. By the finite field method we have
\begin{align*}
\chi_{\shi_{2n-1}}(p)&=\left|\mathbb{F}_p^{2n-1}\setminus\bigcup_{H_p\in\shi_{2n-1,p}}H_p\right|\\
&=\left|\lbrace{(\alpha_1,\ldots,\alpha_n,\beta_1,\ldots,\beta_{n-1})\in\mathbb{F}_p^{2n-1}:\alpha_i\neq\alpha_j,\beta_i\neq\alpha_i-\alpha_j,1\le i<j\le n}\rbrace\right|.
\end{align*}
Consider the graph $\Gamma_{K_n}$ consisting of the vertex set $V(\Gamma_{K_n})=V_1\sqcup V_2$, where $V_1=\lbrace{\alpha_1,\ldots,\alpha_{n}}\rbrace$ and $V_2=\lbrace{\beta_1,\ldots,\beta_{n-1}}\rbrace$ and edge set $E(\Gamma_{K_n})=E_1\sqcup E_2$, where $E_1=\lbrace{(\alpha_i,\alpha_j):1\le i<j\le n}\rbrace$ and $E_2=\lbrace{(\beta_i,\alpha_j):1\le i\le n-1,1\le j\le i}\rbrace$. Let $\Gamma_{K_n}^{\prime}$ be the graph with an appended isolated vertex $\beta_n$. Since adding an isolated node to a graph yields an isomorphic bond lattice, by Theorem \ref{thm: poset isomorphism for a general graph}, we have $\Pi_{\Gamma_{K_n}^{\prime}}\cong\mathcal{L}(\lin_{2n-3}(K_n))$. By (\ref{bond-lattice-chromatic-poly}), it suffices to find the chromatic polynomial of the graph $\Gamma_{K_n}^{\prime}$ in the variable $p$.
We determine the number of valid ways to colour the vertices. Since the vertices in $V_1$ form a clique, the number of ways to properly colour the subgraph induced by $V_1$ is exactly the chromatic polynomial of the complete graph on $n$ vertices, i.e.,
$$\prod_{k=0}^{n-1}(p-k)=p(p-1)(p-2)\cdots(p -n+1)=p_{(n)}.$$

Now we consider the vertices in $V_2$. Since the vertices in $V_2$ are adjacent to some specific vertices in $V_1$ and the vertices in $V_2$ have no edges between themselves; the choice of colour for any $\beta_i$ depends solely on the colours already assigned to the vertices in $V_1$. We have, for each $1\le i\le n-1$, $\beta_i$ is adjacent to $\alpha_j$ for all $1\le j\le i$. Since these $i$ vertices belong to the clique $V_1$, they must be assigned $i$ distinct colours. Consequently, $\beta_i$ is excluded from exactly $i$ colours. Hence the number of choices for $\beta_i$ is $p-i$. Thus the number of possible colourings of the vertices in $V_2$ is
$$\prod_{i=1}^{n-1}(p-i)=(p-1)(p-2)\cdots(p-n+1)=\frac{1}{p}\cdot p_{(n)}.$$
Finally, since the vertex $\beta_n$ is isolated, it can possess all possible $p$ colours. Since the choices of colourings of the vertices in $V_1$ and $V_2$ are independent, we have the chromatic polynomial of the graph $\Gamma_{K_n}^{\prime}$ as
$$\ch_{\Gamma_{K_n}^{\prime}}(p)=p\cdot\frac{1}{p}\cdot p_{(n)}^2=p^2\prod_{i=1}^{n-1}(p-i)^2.$$
By (\ref{bond-lattice-chromatic-poly}), we have
$$\chi_{\shi_{2n-1}(t)}=\frac{1}{t}\ch_{\Gamma_{K_n}^{\prime}}(t)=t\prod_{i=1}^{n-1}(t-i)^2.$$


\end{proof}

\begin{corollary}\label{cor: regions-hom-shi}
The number of regions of the arrangement $\shi_{2n-1}$ is $(n!)^2$.
\end{corollary}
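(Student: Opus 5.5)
The plan is to combine Zaslavsky's formula (\ref{Zaslavsky-region-formula}) with the characteristic polynomial just computed. The preceding theorem gives
$$\chi_{\shi_{2n-1}}(t)=t\prod_{i=1}^{n-1}(t-i)^2,$$
which has degree $1+2(n-1)=2n-1$. This matches the ambient dimension, so the normalization of $\chi$ agrees with the one in (\ref{Zaslavsky-region-formula}). The sign to use is $(-1)^{2n-1}$; the arrangement is not essential (the all-ones direction in the $x$-coordinates lies in every hyperplane), but the dimension-based form of Zaslavsky's theorem applies, i.e.\ $r(\mathcal A)=(-1)^{d}\chi_{\mathcal A}(-1)$ with $d=2n-1$.

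First I will evaluate the polynomial at $t=-1$:
$$\chi_{\shi_{2n-1}}(-1)=(-1)\prod_{i=1}^{n-1}(-1-i)^2=-\prod_{i=1}^{n-1}(i+1)^2=-(n!)^2.$$
The squares remove every sign from the product, leaving only the sign of the leading factor $t$.

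Then I will multiply by $(-1)^{2n-1}=-1$ to obtain $r(\shi_{2n-1})=(n!)^2$.

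The only real obstacle is the convention check: the paper's $\chi_P(t)$ uses $t^{\ell-\rk(x)}$, while the theorem's $\chi_{\shi_{2n-1}}$ was obtained via the finite field method, which counts in dimension $2n-1$. Since the count gives a degree-$(2n-1)$ polynomial, I will state that Zaslavsky's formula is being applied in its dimension form. Any alternative normalization only multiplies $\chi$ by a power of $t$, so it changes the value at $-1$ by a sign and not in absolute value. Because the region count must be positive, the answer is $|\chi(-1)|=(n!)^2$ either way.
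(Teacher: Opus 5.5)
Your proof is correct and is exactly the paper's argument: substitute $t=-1$ into $\chi_{\shi_{2n-1}}(t)=t\prod_{i=1}^{n-1}(t-i)^2$ and apply Zaslavsky's formula with sign $(-1)^{2n-1}$; the squared factors give $(n!)^2$, and the two minus signs cancel. Your normalization remark is a harmless extra check: the rank-based normalization gives $\prod_{i=1}^{n-1}(t-i)^2$ with sign $(-1)^{2n-2}$, which yields the same value.
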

\begin{proof}
The result immediately follows from (\ref{Zaslavsky-region-formula}). 
$$r(\shi_{2n-1})=(-1)^{2n-1}\chi_{\shi_{2n-1}}(-1)=(-1)^{2n-1}(-1)\prod_{i=1}^{n-1}((-1)-i)^2=(n!)^2.$$
\end{proof}

\subsection{Bijection between the regions and pairs of permutations}
Since the number of regions of the arrangement is exactly $(n!)^2$, it is natural to ask whether there is a bijection between the collection of regions and $\SSS_n\times \SSS_n$. We show explicitly that there is indeed a bijection between these two sets. For this purpose, we recall the notion of Lehmer code.

For a given permutation $\pi$ of $n$ elements, the Lehmer code $L(\pi)=(\ell_1(\pi),\ell_2(\pi),\ldots,\ell_n(\pi))$ is generated by counting the number of elements to the right of the current position that are smaller than the element at the current position, i.e., $\ell_i=\#\lbrace{j>i:\pi(i)>\pi(j)}\rbrace$. The process is called encoding. Clearly, $0\le \ell_i\le n-i$, for all $1\le i\le n$. Also, given an $n$-tuple Lehmer code $L$, we can construct a permutation $\pi\in \SSS_n$ as 
\begin{equation}\label{Lehmer-decode}
\pi(i)=\text{the }(\ell_i + 1)\text{-th smallest element remaining in the set }\{1,2,\dots,n\}\setminus\{\pi(1),\dots,\pi(i-1)\}. 
\end{equation}
This process is called decoding. These processes are reversible and hence it forms a bijection between the set of permutations $\SSS_n$ and the set of Lehmer vectors on $n$ elements.

\medskip

\begin{example}
For $n=4$, let $\pi=3142$. Then the Lehmer code for $\pi$ is $L(\pi)=(2,0,1,0)$. Also, for a Lehmer code $(3,0,1,0)$, the decoded permutation is $\pi=4132$.
\end{example}


\begin{theorem}\label{thm: bij-region-pairs-perm}
There exists a bijection from the collection of regions of the arrangement $\shi_{2n-1}$ to $\SSS_n\times \SSS_n$.
\end{theorem}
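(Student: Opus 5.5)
Each region of $\shi_{2n-1}$ will be encoded by a pair of permutations: the first records the relative order of the $x$-coordinates, the second records, through a Lehmer code, where each $y_i$ sits among the differences $x_i-x_j$. Since $\shi_{2n-1}$ contains every braid hyperplane $x_i=x_j$, every point of a region has distinct $x$-coordinates. So each region $R$ determines a unique $\sigma\in\SSS_n$ with $x_{\sigma(1)}<\cdots<x_{\sigma(n)}$ throughout $R$. This is the first coordinate of the pair.

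For the second coordinate, fix $i\in[n-1]$. The hyperplanes involving $y_i$ are exactly $y_i=x_i-x_j$ for $j=i+1,\dots,n$. Inside the braid chamber of $\sigma$, the $n-i$ numbers $x_i-x_j$ ($j>i$) are pairwise distinct, because $x_j\ne x_{j'}$, and their relative order is determined by $\sigma$ alone. Hence they cut the $y_i$-line into $n-i+1$ open intervals. The region then determines, for each $i$, an integer $\ell_i\in\{0,1,\dots,n-i\}$, namely the number of values $x_i-x_j$ ($j>i$) lying below $y_i$. Setting $\ell_n=0$, the vector $(\ell_1,\dots,\ell_n)$ is a Lehmer code, and decoding it via (\ref{Lehmer-decode}) gives $\tau\in\SSS_n$. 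The proposed bijection is $R\mapsto(\sigma,\tau)$.

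To see that this is a bijection, I would show that for each $\sigma$ and each admissible vector $(\ell_1,\dots,\ell_{n-1})$ the set
\[
\{(x,y): x_{\sigma(1)}<\cdots<x_{\sigma(n)},\ \text{$y_i$ in the $\ell_i$-th gap of the sorted values } x_i-x_j,\ j>i\}
\]
is nonempty and is exactly one region. Nonemptiness is clear: choose any $x$ in the braid chamber, then choose each $y_i$ freely in the required interval. Connectedness (indeed convexity) comes from writing the set as $\{x\in C_\sigma\}$ together with the conditions $L_i(x)<y_i<U_i(x)$. Here $L_i$ and $U_i$ are the neighbouring values $x_i-x_j$, which are fixed linear functions once $\sigma$ is fixed, with $\pm\infty$ at the ends. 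So the set is an intersection of open half-spaces, hence convex. It avoids every hyperplane, and any point off the arrangement lies in exactly one such set. Therefore the regions are exactly these sets, indexed by $\SSS_n\times\prod_{i=1}^{n-1}\{0,\dots,n-i\}$. Since Lehmer coding is a bijection onto that product (with $\ell_n=0$), this gives the bijection to $\SSS_n\times\SSS_n$. It also matches Corollary \ref{cor: regions-hom-shi} as a consistency check.

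The main obstacle is the claim that the order of the values $x_i-x_j$ for fixed $i$ depends only on $\sigma$, so that the bounds $L_i,U_i$ are single linear functions on the whole chamber. This holds because $x_i-x_j<x_i-x_{j'}$ if and only if $x_j>x_{j'}$. With this, the convexity argument goes through.
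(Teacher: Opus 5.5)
Your proposal is correct and follows essentially the same route as the paper: the same map $R\mapsto(\sigma,\tau)$, with $\sigma$ read off the braid chamber and $\tau$ decoded from the Lehmer code counting the thresholds $x_i-x_j$ ($j>i$) that lie below $y_i$. The paper checks injectivity (equal codes force the same side of every hyperplane) and surjectivity (an explicit point) separately, while you combine both by showing the sets indexed by $(\sigma,\ell)$ are convex, nonempty and partition the complement, hence are exactly the regions.
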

\begin{proof}
We denote by $R(\shi_{2n-1})$ the set of regions defined by the arrangement $\shi_{2n-1}$. We associate to every region $A$ of $R(\shi_{2n-1})$ an ordered pair of permutations $(\sigma_{A},\tau_{A})\in \SSS_n\times \SSS_n$. Since any point $P\coloneqq({\bf{x},\bf{y}})\coloneqq(x_1,\ldots,x_n,y_1,\ldots,y_{n-1})\in\mathbb{R}^{2n-1}\setminus\shi_{2n-1}$ must avoid the hyperplanes of Type I ($x_i-x_j=0, 1\le i<j\le n$) and Type II ($x_i-x_j-y_i=0, 1\le i<j\le n$), we have that every region $A$ of $R(\shi_{2n-1})$ contains a strict linear ordering of its $x$-coordinates. Thus we define $\sigma_{A}\in \SSS_n$ as the unique permutation satisfying,
$$x_{\sigma_{A}(1)}<x_{\sigma_{A}(2)}<\cdots<x_{\sigma_{A}(n)},~~~\forall ({\bf{x},\bf{y}})\in\mathcal{A}.$$
To get the second permutation $\tau_{A}\in \SSS_n$, we count for each coordinate $y_i$, the number of associated hyperplanes, where $y_i$ is strictly greater than $x_i-x_j$. For each $1\le i<n$, we define corresponding to the region $A$ the inversion vector
$$\ell_i(A)\coloneqq\#\lbrace{j>i:y_i>x_i-x_j}\rbrace.$$
Hence $0\le\ell_i\le n-i$ and $\ell_n=0$. Thus we have the Lehmer code as $L_{A}=(\ell_1(A),\ldots,\ell_n(A))$. More precisely, we can write each $\ell_i(A)$ as $$\ell_i(A)=\sum_{j=i+1}^n\mathbb{I}_{(y_i>x_i-x_j)},$$
where $\mathbb{I}$ denotes the indicator function which gives $1$ for $y_i>x_i-x_j$ and $0$ otherwise. Thus we have 
$$L_{A}=\left(\sum_{j=2}^n \mathbb{I}_{(y_1>x_1-x_j)},\sum_{j=3}^n \mathbb{I}_{(y_2>x_2-x_j)},\ldots,\mathbb{I}_{(y_{n-1}>x_{n-1}-x_n)},0\right).$$ 
By the decoding process \ref{Lehmer-decode}, we get the associated permutation $\tau_{A}\in \SSS_n$ from $L_{A}$. Hence we have a well-defined mapping $f:R(\shi_{2n-1})\to \SSS_n\times \SSS_n$ such that $f(A)=(\sigma_{A},\tau_{A})$ for each $A\in R(\shi_{2n-1})$. We show that $f$ is a bijection.

\medskip

\noindent\textbf{\textit{Proof of injectivity:}}

\medskip

Let $A_1,A_2\in R(\shi_{2n-1})$ be such that $f(A_1)=f(A_2)=(\sigma,\tau)$, say. This implies $\sigma_{A_1}=\sigma_{A_2}=\sigma$ and $\tau_{{A}_1}=\tau_{A_2}=\tau$. Two regions in a hyperplane arrangement are identical if and only if they lie on the same side of every defining hyperplane, i.e., every pair of points in the regions have the exact same signed coordinates.

\begin{enumerate} 
\item For Type I hyperplanes ($x_i-x_j=0, 1\le i<j\le n$), let $(\mathbf{x}, \mathbf{y})\in{A}_1$ and $(\mathbf{x'},\mathbf{y'})\in A_2$. 
\begin{itemize}
\item Since $\sigma_{A_1}=\sigma$, the coordinates of $A_1$ satisfy $x_{\sigma(1)}<\dots<x_{\sigma(n)}$.
\item Since $\sigma_{A_2}=\sigma$, the coordinates of $A_2$ satisfy $x'_{\sigma(1)}<\dots<x'_{\sigma(n)}$.
\end{itemize}
For any pair $1\le i<j\le n$, their relative position in the chain is uniquely determined by the positions of $i$ and $j$ in $\sigma$. Thus, $\text{sgn}(x_i-x_j)=\text{sgn}(x'_i-x'_j)$ is identical for both regions. Hence no Type I hyperplane separates $A_1$ and $A_2$.

\item For Type II hyperplanes ($x_i-x_j-y_i=0, 1\le i<j\le n$), since $\tau_{A_1}=\tau_{A_2}=\tau$, we have identical Lehmer codes for both regions,
$$L_{A_1}=L_{A_2}=(\ell_1,\ell_2,\dots,\ell_{n-1},0).$$
Fix an index $i\in\{1,\dots,n-1\}$. The hyperplanes bounding $y_i$ correspond to the threshold set $T_i=\{x_i-x_j: j>i\}$. Since both regions share the same $x$-ordering $\sigma$, the elements of $T_i$ are ordered identically in both regions. Let these ordered boundaries be $t_{i,1}<\dots<t_{i,n-i}$ for $A_1$, and $t'_{i,1}<\dots<t'_{i,n-i}$ for $A_2$. Since $\ell_i$ is identical for both regions, $y_i$ and $y'_i$ fall into the exact same relative open interval:
\begin{itemize}
\item If $\ell_i=0$, then $y_i<t_{i,1}<\cdots<t_{i,n-i}$ and $y'_i<t'_{i,1}<\cdots<t'_{i,n-i}$. Thus, for all $j>i$, $x_i-x_j-y_i>0$ and $x'_i-x'_j-y'_i>0$.
\item If $\ell_i=m$, where $1\le m< n-i$, then $t_{i,1}<\cdots<t_{i,m}<y_i<t_{i,m+1}<\cdots<t_{i,n-i}$ and $t'_{i,1}<\cdots<t'_{i,m}<y'_i<t'_{i,m+1}<t'_{i,n-i}$. Hence there are exactly $m$ thresholds that are smaller than $y_i$ and $y'_i$. And there are exactly $((n-i)-m)$ thresholds that are larger than $y_i$ and $y'_i$. Thus, for the the first $m$-many values of $j$, we have $y_i>x_i-x_j$ and $y'_i>x'_i-x'_j$ i.e., $x_i-x_j-y_i<0$ and $x'_i-x'_j-y'_i<0$, respectively. And for the rest $((n-i)-m)$-many values of $j$, we have $y_i<x_i-x_j$ and $y'_i<x'_i-x'_j$ i.e., $x_i-x_j-y_i>0$ and $x'_i-x'_j-y'_i>0$, respectively. 
\item If $\ell_i=n-i$, then $t_{i,1}<\cdots<t_{i,n-i}<y_i$ and $t'_{i,1}<\cdots<t'_{i,n-i}<y'_i$. Thus, for all $j>i$, $x_i-x_j-y_i<0$ and $x'_i-x'_j-y'_i<0$.
\end{itemize}
The sign of the expression $x_i-x_j-y_i$ matches for every $j$ across both points. As $A_1$ and $A_2$ lie on the same side of every single defining hyperplane in $\shi_{2n-1}$, they cannot be separated by any boundary. Therefore, $\mathcal{A}_1=\mathcal{A}_2$, and $f$ is injective.
\end{enumerate}

\medskip

\noindent\textbf{\textit{Proof of surjectivity:}}

\medskip

We show that for an arbitrarily chosen pair $(\sigma,\tau)\in \SSS_n\times \SSS_n$, there exists a region $A\in R(\shi_{2n-1})$ such that $f(A)=(\sigma,\tau)$. We prove this constructively by generating a point $P\coloneqq(\mathbf{x},\mathbf{y})\in\mathbb{R}^{2n-1}$ that belongs to such a region.

\begin{enumerate}
\item Given $\sigma\in \SSS_n$, we define the $x$-coordinates of our point $P$ by setting
$$x_{\sigma(k)}=k\quad\text{for } k=1,2,\dots,n.$$
This assignment strictly satisfies $x_{\sigma(1)}<x_{\sigma(2)}<\dots<x_{\sigma(n)}$. Hence $P$ completely avoids all Type I hyperplanes. 

\item For constructing the $y$-coordinates, 
Given $\tau \in \SSS_n$, we compute its unique Lehmer code $L=(\ell_1,\ell_2,\dots,\ell_{n-1},0)$. By definition, $0\le\ell_i\le n-i$. For each $i\in\{1,\dots,n-1\}$, the values of $\mathbf{x}$ fixed in the first step yield a unique set of distinct threshold values $T_i=\{x_i-x_j: j>i\}$. We sort these thresholds in strictly increasing order:
$$t_{i,1}<t_{i,2}<\dots<t_{i,n-i}.$$
\end{enumerate}
Now, we assign the value of $y_i$ based on the integer $\ell_i$:
\begin{itemize}
\item If $\ell_i=0$, we choose $y_i$ to be $y_i<t_{i,1}$. 
\item If $1\le \ell_i<n-i$, we choose $y_i$ to be $t_{i,\ell_i}<y_i<t_{i,\ell_i+1}$.
\item If $\ell_i=n-i$, we choose $y_i$ to be $y_i>t_{i,n-i}$.
\end{itemize}
Following this construction, $y_i$ does not coincide with any element in $T_i$. Therefore, $y_i\neq x_i-x_j$ for any $j>i$, meaning $P$ avoids all Type II hyperplanes. Furthermore, the number of thresholds that $y_i$ strictly exceeds is precisely $\ell_i$. Hence evaluating our counting function for this constructed point naturally yields,
$$\sum_{j=i+1}^n\mathbb{I}_{(y_i>x_i-x_j)}=\ell_i.$$
Thus, the point $P$ belongs to a well-defined region $A$ whose identities are precisely $\sigma_{A}=\sigma$ and $\tau_{A}=\tau$. Since such a region can be built for any arbitrary pair $(\sigma,\tau)$, $f$ is surjective.
\end{proof}

\begin{example}
\end{example}
\begin{enumerate}
\item Take the region $A$ in $\mathbb{R}^7$ defined as 
$$A = \left\{ (\mathbf{x}, \mathbf{y})\coloneqq(x_1,x_2,x_3,x_4,y_1,y_2,y_3) \in \mathbb{R}^7 \ \middle|\ \begin{aligned} &x_3 < x_1 < x_2 < x_4 \\ &x_1 - x_4 < y_1 < x_1 - x_2 \\ &y_2 < x_2 - x_4 \\ &y_3 > x_3 - x_4 \end{aligned} \right\}.$$
Then we have $\sigma_A=3124$. Now we have,
$$\ell_1(A)=\sum_{j=2}^4\mathbb{I}_{(y_1>x_1-x_j)}=\mathbb{I}_{(y_1>x_1-x_2)}+\mathbb{I}_{(y_1>x_1-x_3)}+\mathbb{I}_{(y_1>x_1-x_4)}=1,$$
$$\ell_2(A)=\sum_{j=3}^4\mathbb{I}_{(y_2>x_2-x_j)}=\mathbb{I}_{(y_2>x_2-x_3)}+\mathbb{I}_{(y_2>x_2-x_4)}=0,$$
$$\ell_3(A)=\mathbb{I}_{(y_3>x_3-x_4)}=1,$$
$$\ell_4(A)=0.$$
Hence $L_A=(1,0,1,0)$. By the decoding process \ref{Lehmer-decode}, the permutation associated with $L_A$ is $\tau_A=2143$. By the definition of $f$, $f(A)=(\sigma_A,\tau_A)=(3124,2143)$.
\item Let $n=4$ and let $(\sigma,\tau)=(2341,4213)$. Then the $x$-coordinates of the region $A$ is defined as the strict inequality $x_2<x_3<x_4<x_1$. Now for $\tau=4213$, the unique Lehmer code becomes $L(\tau)=(3,1,0,0)$. Since $x_2<x_3<x_4$, we have the sorted threshold values as $t_{1,1}=x_1-x_4<t_{1,2}=x_1-x_3<t_{1,3}=x_1-x_2$. Since $\ell_1=3$, we have $y_1>x_1-x_2$. Similarly, we obtain $x_2-x_4<y_2<x_2-x_3$ and $y_3<x_3-x_4$. Thus the region $A$ is defined as 
$$A = \left\{ (\mathbf{x}, \mathbf{y})\coloneqq(x_1,x_2,x_3,x_4,y_1,y_2,y_3) \in \mathbb{R}^7 \ \middle|\ \begin{aligned} &x_2 < x_3 < x_4 < x_1 \\ &y_1 > x_1 - x_2 \\ &x_2 - x_4 < y_2 < x_2 - x_3 \\ &y_3 < x_3 - x_4 \end{aligned} \right\}.$$
\end{enumerate}

Using this bijection, we can give the following combinatorial interpretation for the coefficients appearing in the characteristic polynomial.

\begin{corollary}
Let $f:R(\shi_{2n-1})\to \mathfrak S_n\times \mathfrak S_n$ be the bijection of Theorem~\ref{thm: bij-region-pairs-perm}, where
\[
f(A)=(\sigma_{A},\tau_{A}).
\]
Define the sign of a region $A\in R(\shi_{2n-1})$ by
\[
\varepsilon(A)
=
\operatorname{sgn}(\sigma_{A})
\operatorname{sgn}(\tau_{A}).
\]
Then
\[
\sum_{A\in R(\shi_{2n-1})}
\varepsilon(A)\,
t^{\cyc(\sigma_{A})+\cyc(\tau_{A})-1}
=
t\prod_{i=1}^{n-1}(t-i)^2.
\]
\end{corollary}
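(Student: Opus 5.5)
Since $f$ is a bijection onto $\SSS_n\times\SSS_n$ (Theorem~\ref{thm: bij-region-pairs-perm}), the sum over regions becomes a sum over all pairs $(\sigma,\tau)\in\SSS_n\times\SSS_n$:
\[
\sum_{A\in R(\shi_{2n-1})}\varepsilon(A)\,t^{\cyc(\sigma_A)+\cyc(\tau_A)-1}
=\frac1t\Bigl(\sum_{\sigma\in\SSS_n}\operatorname{sgn}(\sigma)\,t^{\cyc(\sigma)}\Bigr)\Bigl(\sum_{\tau\in\SSS_n}\operatorname{sgn}(\tau)\,t^{\cyc(\tau)}\Bigr).
\]
Here $\cyc(\sigma)$ is read as the number of cycles of $\sigma$. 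The geometry of the regions then plays no further role, and the statement reduces to an identity for a single signed cycle generating function.

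For each factor I would use $\operatorname{sgn}(\sigma)=(-1)^{n-\cyc(\sigma)}$. Then
\[
\sum_{\sigma\in\SSS_n}\operatorname{sgn}(\sigma)\,t^{\cyc(\sigma)}=\sum_{k=1}^n (-1)^{n-k}c(n,k)\,t^k=\sum_{k=1}^n s(n,k)\,t^k=t(t-1)\cdots(t-n+1),
\]
where $c(n,k)$ is the number of permutations in $\SSS_n$ with $k$ cycles. This is the standard generating function for the signed Stirling numbers of the first kind, already recalled in the introduction via $\chi_{\Pi_n}(t)=\sum_k s(n,k)t^{k-1}$ and $\chi_{\Pi_n}(t)=(t-1)\cdots(t-n+1)$. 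If a self-contained argument is preferred, it follows by induction on $n$: inserting $n$ either as a new fixed point (multiplying by $t$) or after one of the $n-1$ existing letters in its cycle (same cycle count, sign flipped) yields the factor $(t-(n-1))$.

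Squaring gives $t^2\prod_{i=1}^{n-1}(t-i)^2$, and dividing by $t$ gives $t\prod_{i=1}^{n-1}(t-i)^2$. This is $\chi_{\shi_{2n-1}}(t)$ by the preceding theorem, which completes the argument.

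There is no real obstacle. The only points needing care are that $\cyc$ here denotes the number of cycles rather than the cycle support defined earlier, and that the exponent shift $-1$ together with the sign convention $(-1)^{n-k}$ per factor combine correctly: the total sign is $(-1)^{2n-\cyc(\sigma)-\cyc(\tau)}=\operatorname{sgn}(\sigma)\operatorname{sgn}(\tau)=\varepsilon(A)$.
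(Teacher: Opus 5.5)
Your proposal is correct and follows the paper's proof essentially verbatim: you transfer the sum to $\SSS_n\times\SSS_n$ via the bijection $f$, factor it as $t^{-1}\bigl(\sum_{\pi\in\SSS_n}\operatorname{sgn}(\pi)\,t^{\cyc(\pi)}\bigr)^2$, and apply the signed cycle enumerator $t(t-1)\cdots(t-n+1)$. Your extra remarks are accurate refinements of points the paper leaves implicit, namely the inductive derivation of the falling factorial and the reading of $\cyc$ as the number of cycles rather than the cycle support.
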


\begin{proof}
By the bijection of Theorem~\ref{thm: bij-region-pairs-perm},
\[
\sum_{A\in R(\shi_{2n-1})}
\varepsilon(A)\,
t^{\cyc(\sigma_{A})+\cyc(\tau_{A})-1}
=
\sum_{\sigma,\tau\in \mathfrak S_n}
\operatorname{sgn}(\sigma)\operatorname{sgn}(\tau)\,
t^{\cyc(\sigma)+\cyc(\tau)-1}.
\]
Using the multiplicativity of the sign character, we obtain
\[
\sum_{A\in R(\shi_{2n-1})}
\varepsilon(A)\,
t^{\cyc(\sigma_{A})+\cyc(\tau_{A})-1}
=
t^{-1}
\left(
\sum_{\pi\in\mathfrak S_n}
\operatorname{sgn}(\pi)\,
t^{\cyc(\pi)}
\right)^2.
\]
Now the classical signed cycle enumerator for permutations gives
\[
\sum_{\pi\in\mathfrak S_n}
\operatorname{sgn}(\pi)\,
t^{\cyc(\pi)}
=
t(t-1)(t-2)\cdots(t-n+1).
\]
Therefore,
\[
\sum_{A\in R(\shi_{2n-1})}
\varepsilon(A)\,
t^{\cyc(\sigma_{A})+\cyc(\tau_{A})-1}
=
t^{-1}
\bigl(t(t-1)\cdots(t-n+1)\bigr)^2
=
t\prod_{i=1}^{n-1}(t-i)^2.
\]
\end{proof}

\section{First letter restrictions of generalised $\mathcal{D}$-permutations}\label{Sec6}

For this section, we introduce a simplified notation for the families
$\mathcal{D}_{2n-1}^{\mathcal R}$ defined earlier. More precisely, the notation
$\mathcal{D}_{2n-1,j}$ corresponds to the choice
\[
\mathcal R=\{1,3,\ldots,2(n-j)-1\},
\]
for $0\le j\le n-1$, while $\mathcal{D}_{2n-1,n}$ corresponds to $\mathcal R=\emptyset$.

For $1 \le j \le n$, define
\[
\mathcal{D}_{2n-1,j} = \left\{ \sigma \in \mathfrak{S}_{2n-1} \;:\;
\sigma(i) \le i \text{ if } i \text{ is even, and } \sigma(i) \ge i
\text{ if } i \in \{2(n-j)+1,\,2(n-j)+3,\ldots,2n-1\}
\right\}.
\]

For $j=0$, define
\[
\mathcal{D}_{2n-1,0}
=
\left\{ \sigma \in \mathfrak{S}_{2n-1}
\;:\;
\sigma(i) \le i \text{ whenever } i \text{ is even}
\right\}.
\]
 We restrict attention to the following subsets of $\mathcal{D}_{2n-1,0}$. Define $\mathcal{D}^r_{2n-1,0}$ to be the permutations in $\SSS_{2n-1}$ such that $\sigma(i)\le i$ when $i$ is even and $\sigma(1)=r$.

\begin{theorem}
    For $r$, we have 
    \begin{equation}
        |\mathcal{D}^r_{2n+1,0}| = (n!)^2 \left\lceil \frac{r}{2} \right\rceil
    \end{equation}
\end{theorem}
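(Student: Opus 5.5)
The plan is a direct count. We fill in the values at the even positions one at a time, in increasing order of position, and count the choices at each step. Once the even positions are filled, the odd positions carry no constraint, so they can take the leftover values in any order. We write $c=\lceil r/2\rceil$ throughout.

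First, as a sanity check and a template, we recount $|\mathcal{D}_{2n+1,0}|$ with no restriction on $\sigma(1)$. The even positions are $2,4,\dots,2n$, and position $2k$ must receive a value in $\{1,\dots,2k\}$. When we reach position $2k$, exactly $k-1$ values have already been used by earlier even positions. All of them lie in $\{1,\dots,2k-2\}\subseteq\{1,\dots,2k\}$, so position $2k$ has $2k-(k-1)=k+1$ choices, independently of the earlier choices. The $n+1$ odd positions then absorb the remaining $n+1$ values arbitrarily. This gives $|\mathcal{D}_{2n+1,0}|=((n+1)!)^2$. This agrees with summing the claimed formula over $1\le r\le 2n+1$: since $\sum_{r=1}^{2n+1}\lceil r/2\rceil=(n+1)^2$, the claimed counts add up to $(n!)^2(n+1)^2=((n+1)!)^2$.

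Now we impose $\sigma(1)=r$. The value $r$ is consumed by the odd position $1$, so we repeat the count with $r$ removed from the available values. We again process even positions $2k$ in increasing order and split according to whether $2k<r$ or $2k\ge r$.

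If $2k<r$, then $r\notin\{1,\dots,2k\}$, and position $2k$ still has $k+1$ choices.

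If $2k\ge r$, then $r$ lies in the allowed range but is unavailable. The $k-1$ earlier even values also lie in that range and are distinct from $r$. Hence position $2k$ has $2k-1-(k-1)=k$ choices.

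The condition $2k<r$ holds exactly for $k\le\lfloor (r-1)/2\rfloor=c-1$. The remaining $n$ odd positions $3,5,\dots,2n+1$ then take the leftover $n$ values in $n!$ ways. Therefore
\[
|\mathcal{D}^r_{2n+1,0}| \;=\; n!\cdot\prod_{k=1}^{c-1}(k+1)\cdot\prod_{k=c}^{n}k \;=\; n!\cdot c!\cdot\frac{n!}{(c-1)!} \;=\; (n!)^2\,c .
\]

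The only step needing care is the claim that the number of choices at position $2k$ does not depend on the earlier choices. This requires checking that every previously used value, namely $r$ together with the earlier even-position values, lies inside $\{1,\dots,2k\}$ exactly when we subtract it. For the earlier even-position values this holds because they are at most $2k-2$. For $r$ it holds precisely in the case $2k\ge r$, which is where we subtract it. The boundary values of $r$ should also be checked: for $r=1$ we have $c=1$ and the count is $(n!)^2$, and for $r=2n+1$ we have $c=n+1$ and the second product is empty, giving $n!\cdot(n+1)!$, which equals $(n!)^2(n+1)$.
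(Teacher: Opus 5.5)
Your proof is correct, and it takes a different route from the paper's.

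The paper argues by induction on $r$ using value swaps. The involution $\sigma\mapsto(2k-1,\,2k)\circ\sigma$ gives $|\mathcal{D}^{2k-1}_{2n+1,0}|=|\mathcal{D}^{2k}_{2n+1,0}|$. The swap $(2k,\,2k+1)$ gives $|\mathcal{D}^{2k+1}_{2n+1,0}|=|\mathcal{D}^{2k}_{2n+1,0}|+|E_k|$, where $E_k$ is the set of permutations with $\sigma(1)=2k+1$ and $\sigma(2k)=2k$; these are exactly the permutations on which the swap leaves the set. The paper then needs two inputs: the base case $|\mathcal{D}^{1}_{2n+1,0}|=(n!)^2$ and the value $|E_k|=(n!)^2$.

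What the paper's route buys is a bijective explanation of why $r=2k-1$ and $r=2k$ give equal counts, and why consecutive pairs differ by a constant. This is the same value-swap idea the paper reuses later for $\mathcal{D}_{2n+1,n+1}$ and the Kreweras triangle.

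Your staircase count gives the closed form in one step, with no induction and no auxiliary sets. Its key point, which you isolate correctly, is that every value forbidden at position $2k$ already lies in $\{1,\dots,2k\}$. Hence the number of choices at that position does not depend on earlier choices.

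Your method also supplies both inputs the paper's recurrence needs. With $r=1$ it gives the base case. Forcing $\sigma(2k)=2k$ gives $|E_k|=k!\cdot\frac{n!}{k!}\cdot n!=(n!)^2$ directly. This is more reliable than the paper's deletion-and-relabel step for $E_k$. That step shifts positions $2,\dots,2k-1$ down by one, which flips their parity. So for $k\ge 2$ the even positions of the reduced permutation need not come from even positions of $\sigma$. Moreover, the stated bijection with $\mathcal{D}^{1}_{2n-1,0}$ is followed by an unexplained factor $n^2$.
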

\begin{proof}
Recall that
$$\mathcal D_{2n+1,0}^r=\left\{\sigma\in \SSS_{2n+1} : \sigma(2i)\le 2i \text{ for all }1\le i\le n,\ \sigma(1)=r\right\}.$$

We prove the result by induction on $r$. We first show that for every $k\ge1$, $$|\mathcal D_{2n+1,0}^{2k-1}|=|\mathcal D_{2n+1,0}^{2k}|.$$

Define
$$\Phi_k:\mathcal D_{2n+1,0}^{2k-1}\to \mathcal D_{2n+1,0}^{2k}$$
by
$$\Phi_k(\sigma)=(2k-1,\,2k)\circ \sigma,$$
that is, we interchange the values $2k-1$ and $2k$ everywhere in the permutation.

If $\sigma\in \mathcal D_{2n+1,0}^{2k-1}$, then $\Phi_k(\sigma)(1)=2k.$
Moreover, the condition of even positions is preserved. Indeed, if $\Phi_k(\sigma)(2i)=2k,$ then necessarily $\sigma(2i)=2k-1\le 2i,$ hence $2k\le 2i$. Similarly, if $\Phi_k(\sigma)(2i)=2k-1,$ then $\sigma(2i)=2k\le 2i.$
All other values are unchanged, so
$$\Phi_k(\sigma)(2i)\le 2i$$
for every $i$.

Since $\Phi_k$ is clearly an involution, it is a bijection. Therefore,
$$|\mathcal D_{2n+1,0}^{2k-1}|=|\mathcal D_{2n+1,0}^{2k}|.$$

Next, we compare $\mathcal D_{2n+1,0}^{2k}$ and $\mathcal D_{2n+1,0}^{2k+1}$. Define
$$\Psi_k:\mathcal D_{2n+1,0}^{2k+1}\to \SSS_{2n+1}$$ by $\Psi_k(\sigma)=(2k,\,2k+1)\circ \sigma.$

If $\sigma(2k)\neq 2k$, then $\Psi_k(\sigma)\in \mathcal D_{2n+1,0}^{2k}$. Indeed, the only possible obstruction occurs when an even position containing $2k$ is changed to $2k+1$. Since $2k$ may only appear in an even position $2i$ with $2k\le 2i$, the only problematic case is $2i=2k$. Thus, the map fails precisely when
$$\sigma(2k)=2k.$$

Consequently, $$|\mathcal D_{2n+1,0}^{2k+1}|=|\mathcal D_{2n+1,0}^{2k}|+|E_k|,$$ where $E_k=\{\sigma\in \mathcal D_{2n+1,0}^{2k+1} : \sigma(2k)=2k\}.$

We now count $E_k$. Fix $\sigma\in E_k$. Then
$$\sigma(1)=2k+1, \qquad \sigma(2k)=2k.$$

Delete the positions $1$ and $2k$, together with the values $2k+1$ and $2k$. Relabel the remaining positions increasingly by $1,2,\dots,2n-1,$ and similarly relabel the remaining values increasingly. This produces a permutation $\tau\in \SSS_{2n-1}.$ Moreover, the defining condition is preserved. Every even position of $\tau$ comes from an even position of $\sigma$, and if $\tau(2i)$ corresponds to $\sigma(2j),$ then $\sigma(2j)\le 2j$ implies $\tau(2i)\le 2i.$

Also, after relabeling, the first value becomes $1$. Hence, $\tau\in \mathcal D_{2n-1,0}^1.$

Conversely, every permutation in $\mathcal D_{2n-1,0}^1$ uniquely reconstructs an element of $E_k$ by reversing the procedure. Therefore,
$$|E_k|=|\mathcal D_{2n-1,0}^1|.$$

By induction on $n$,
$$|\mathcal D_{2n-1,0}^1|=((n-1)!)^2.$$

When restoring the deleted labels, there are $n^2$ possible expansions, giving
$$|E_k|=n^2((n-1)!)^2=(n!)^2.$$

Thus,
$$|\mathcal D_{2n+1,0}^{2k+1}|=|\mathcal D_{2n+1,0}^{2k}|+(n!)^2.$$

Finally, since
$$|\mathcal D_{2n+1,0}^1|=(n!)^2,$$
the two recurrence relations
$$|\mathcal D_{2n+1,0}^{2k-1}|=|\mathcal D_{2n+1,0}^{2k}|$$
and
$$|\mathcal D_{2n+1,0}^{2k+1}|=|\mathcal D_{2n+1,0}^{2k}|+(n!)^2$$
imply inductively that
$$|\mathcal D_{2n+1,0}^{2k-1}|=|\mathcal D_{2n+1,0}^{2k}|=k(n!)^2.$$

Since $k=\left\lceil \frac r2\right\rceil,$ we conclude that
$$|\mathcal D_{2n+!,0}^r|=(n!)^2\left\lceil \frac r2\right\rceil .$$
\end{proof}

Next, we give a simple bijection between D-permutations and Dumont derangements that shifts the first letter by $1$. Recall that a permutation is called a Dumont derangement if $\sigma\in \SSS_{2n}$ satisfies $\sigma(2i)<2i$ and $\sigma(2i-1)>2i-1$.

We define a bijection between Dumont derangements, denoted by $DD_{2n+2}$, in $\SSS_{2n+2}$ and D-permutations in $\SSS_{2n+1}$ as follows. However, we will suppress the last element while writing as the last element in necessarily $2n+1$.

\begin{theorem}
Let $a_1\dots a_{2n}(2n+1)\in \mathcal{D}_{2n+1,n+1}$ be a D-permutation. Then, the map $\phi:\mathcal{D}_{2n+1,n+1}\to DD_{2n+2}$ defined by
\[
\phi(a_1a_2\dots a_{2n}(2n+1))
=
a'_1,1,a'_3,a'_2,a'_5,a'_4,\dots,a'_{2n-1},a'_{2n-2},2n+2,a'_{2n},
\]
where $a'_j=a_j+1$, is a bijection.
\end{theorem}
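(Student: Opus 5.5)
The plan is to verify directly that $\phi$ is well defined and then to write down an explicit inverse. It helps to first record what membership in $\mathcal{D}_{2n+1,n+1}$ means. Since $\mathcal R=\emptyset$ here, a permutation $\sigma\in\SSS_{2n+1}$ lies in $\mathcal{D}_{2n+1,n+1}$ exactly when $\sigma(i)\ge i$ for every odd $i\le 2n+1$ and $\sigma(i)\le i$ for every even $i$. In particular $\sigma(2n+1)\ge 2n+1$ forces $\sigma(2n+1)=2n+1$, which justifies suppressing the last letter.

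Writing $b=\phi(\sigma)=b_1\cdots b_{2n+2}$, the definition says:
\begin{itemize}
\item $b_1=a_1+1$ and $b_2=1$;
\item $b_{2i+1}=a_{2i+1}+1$ for $1\le i\le n-1$;
\item $b_{2i+2}=a_{2i}+1$ for $1\le i\le n$;
\item $b_{2n+1}=2n+2$.
\end{itemize}

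\textbf{$b$ is a permutation.} Since $\{a_1,\dots,a_{2n}\}=[2n]$, the shifted letters $a'_j$ fill $\{2,\dots,2n+1\}$. Together with $1$ and $2n+2$, this shows $b\in\SSS_{2n+2}$.

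\textbf{$b$ is a Dumont derangement.} I check the two families of inequalities position by position.
\begin{itemize}
\item Odd positions: $b_1=a_1+1\ge 2>1$. For $1\le i\le n-1$, $a_{2i+1}\ge 2i+1$ gives $b_{2i+1}\ge 2i+2>2i+1$. Finally $b_{2n+1}=2n+2>2n+1$.
\item Even positions: $b_2=1<2$. For $1\le i\le n$, $a_{2i}\le 2i$ gives $b_{2i+2}\le 2i+1<2i+2$.
\end{itemize}
Hence $b\in DD_{2n+2}$. The key point is that shifting values by $1$ turns a weak inequality against the old position into a strict inequality against the new position, which is the old position plus $0$ or $1$.

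\textbf{Inverse map.} Let $d\in DD_{2n+2}$. The condition $d(2)<2$ forces $d(2)=1$, and $d(2n+1)>2n+1$ forces $d(2n+1)=2n+2$. These are exactly the two fixed entries in the image of $\phi$. Define $\phi^{-1}(d)$ by
\begin{itemize}
\item $a_1=d(1)-1$;
\item $a_{2i+1}=d(2i+1)-1$ for $1\le i\le n-1$;
\item $a_{2i}=d(2i+2)-1$ for $1\le i\le n$;
\item $a_{2n+1}=2n+1$.
\end{itemize}
The remaining values of $d$ are $\{2,\dots,2n+1\}$, so $a$ is a permutation of $[2n+1]$.

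The strict inequalities of $d$ reverse into the weak ones for $a$, using integrality:
\begin{itemize}
\item $d(2i+1)\ge 2i+2$ gives $a_{2i+1}\ge 2i+1$, for $1\le i\le n-1$;
\item $d(1)\ge 2$ gives $a_1\ge 1$;
\item $d(2i+2)\le 2i+1$ gives $a_{2i}\le 2i$, for $1\le i\le n$.
\end{itemize}
So $\phi^{-1}(d)\in\mathcal{D}_{2n+1,n+1}$. The two compositions are the identity by construction, which proves that $\phi$ is a bijection.

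There is no serious obstacle. The only delicate point is the index bookkeeping at the boundary positions $1$, $2$, $2n+1$ and $2n+2$, where the forced values $d(2)=1$ and $d(2n+1)=2n+2$ must be observed so that the inverse is well defined.
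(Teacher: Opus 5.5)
Your proof is correct and follows the same route as the paper: you check that $\phi(\sigma)$ is a permutation satisfying the strict Dumont inequalities, then invert by deleting $1$ and $2n+2$, subtracting $1$, and undoing the interleaving, with integrality turning strict inequalities back into weak ones. You also observe explicitly that $d(2)=1$ and $d(2n+1)=2n+2$ are forced in every Dumont derangement, which the paper's inverse uses only tacitly, so your version is slightly more complete.
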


\begin{proof}
Let $\sigma=a_1a_2\dots a_{2n}(2n+1)\in \mathcal D_{2n+1,n+1}$. By definition,
$a_{2i-1}\ge 2i-1$ and $a_{2i}\le 2i$ for all $1\le i\le n$. We first show that $\phi(\sigma)\in DD_{2n+2}$. Since $a_1,\dots,a_{2n}$ is a permutation of $\{1,2,\dots,2n\}$, the entries
$a_1+1,\dots,a_{2n}+1$ are precisely the elements of
$\{2,3,\dots,2n+1\}$. Together with the inserted entries $1$ and $2n+2$, every element of $\{1,2,\dots,2n+2\}$ appears exactly once. Hence $\phi(\sigma)$ is a permutation of length $2n+2$.

For odd positions,
$\phi(\sigma)(2i-1)=a_{2i-1}+1$ for $1\le i\le n$. Since
$a_{2i-1}\ge 2i-1$, we obtain
$\phi(\sigma)(2i-1)\ge 2i>2i-1$. Also,
$\phi(\sigma)(2n+1)=2n+2>2n+1$. For even positions, $\phi(\sigma)(2)=1<2$, and for
$2\le i\le n+1$,
$\phi(\sigma)(2i)=a_{2i-2}+1$. Since
$a_{2i-2}\le 2i-2$, it follows that
$\phi(\sigma)(2i)\le 2i-1<2i$. Therefore $\phi(\sigma)\in DD_{2n+2}$.

To show that $\phi$ is bijective, we construct its inverse. Given
$\tau\in DD_{2n+2}$, remove the entries $1$ and $2n+2$, subtract $1$ from all remaining entries, and undo the interleaving by applying the adjacent swaps to recover a permutation $\sigma$ of length $2n$.

Because $\tau$ satisfies the Dumont derangement conditions,
$\tau(2i-1)>2i-1$ implies $\tau(2i-1)-1\ge 2i-1$, and $\tau(2i)<2i$ implies
$\tau(2i)-1\le 2i-2$. Hence, the recovered permutation satisfies
$a_{2i-1}\ge 2i-1$ and $a_{2i}\le 2i$ and adding a $2n+1$ at the end gives $\sigma\in \mathcal{D}_{2n+1,n+1}$.

Thus, $\phi$ is invertible, and therefore is a bijection.
\end{proof}

With the following result, we can understand the relation between the first letter refinements of the sets $\mathcal{D}_{2n+1,n+1}$ and $DD_{2n+2}$

\begin{proposition}
    The cardinality of the set $\mathcal D_{2n+1,n+1}$ is invariant under the transposition $(2i-1,2i)$ acting on values, while the cardinality of the set $DD_{2n}$ is invariant under the transposition $(2i,2i+1)$ acting on values.
\end{proposition}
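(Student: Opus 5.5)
The plan is to read the statement as saying that composing on the left with the given transposition of values is an involution of the relevant set. Since such an involution interchanges the classes with first letter $2i-1$ and $2i$ (respectively $2i$ and $2i+1$), this gives the first-letter refinement equalities. The model is the map $\Phi_k$ used in the proof of the theorem on $|\mathcal D^r_{2n+1,0}|$.

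\textbf{Step 1: the D-permutations.} Take $\sigma\in\mathcal D_{2n+1,n+1}$, so that $\sigma(j)\ge j$ for odd $j$ and $\sigma(j)\le j$ for even $j$. Set $\sigma'=(2i-1,\,2i)\circ\sigma$. Only the two positions $j$ with $\sigma(j)\in\{2i-1,2i\}$ change, so it is enough to check those, and parity does all the work.
\begin{itemize}
\item Suppose $\sigma(j)=2i-1$, so $\sigma'(j)=2i$. If $j$ is odd, then $j\le 2i-1<2i$, so the condition still holds. If $j$ is even, then $2i-1\le j$ together with $j$ even forces $j\ge 2i$.
\item Suppose $\sigma(j)=2i$, so $\sigma'(j)=2i-1$. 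If $j$ is odd, then $j\le 2i$ together with $j$ odd forces $j\le 2i-1$. If $j$ is even, then $2i-1<2i\le j$.
\end{itemize}
The fixed last letter $2n+1$ is untouched as long as $2i\le 2n$. Hence $\sigma\mapsto\sigma'$ is an involution of $\mathcal D_{2n+1,n+1}$. It sends $\{\sigma:\sigma(1)=2i-1\}$ bijectively onto $\{\sigma:\sigma(1)=2i\}$.

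\textbf{Step 2: the Dumont derangements.} Here the conditions are $\tau(j)>j$ for odd $j$ and $\tau(j)<j$ for even $j$, and the map is $\tau\mapsto(2i,\,2i+1)\circ\tau$ for $1\le i\le n-1$. The same parity argument applies.
\begin{itemize}
\item Suppose the value $2i$ becomes $2i+1$. At an odd position we had $2i>j$, so $2i+1>j$ holds as well. At an even position we had $2i<j$, and $j$ even forces $j\ge 2i+2>2i+1$.
\item Suppose the value $2i+1$ becomes $2i$. At an odd position we had $2i+1>j$, and $j$ odd forces $j\le 2i-1<2i$. At an even position we had $2i+1<j$, so $2i<j$ holds as well.
\end{itemize}
So this map is an involution of $DD_{2n}$ exchanging first letters $2i$ and $2i+1$.

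\textbf{Where care is needed.} The only real care lies in the boundary cases. In both sets the strict and weak inequalities combine with parity to absorb a shift of one, and this fails exactly in the situation isolated as $E_k$ in the earlier proof, namely a fixed point at the wrong parity. Here that situation cannot occur: a fixed point would need $\sigma(2i-1)=2i-1$ or $\sigma(2i)=2i$, and both remain admissible after the swap because the swap also exchanges positions' roles correctly. For Dumont derangements fixed points are excluded altogether. I would then combine this with the bijection $\phi$, which shifts the first letter by one, to relate the two refinements: the pairing $\{2i-1,2i\}$ on D-permutations is carried to the pairing $\{2i,2i+1\}$ on $DD_{2n+2}$.
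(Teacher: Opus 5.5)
Your proof is correct and follows the paper's argument: in both cases the paper also composes on the left with the value transposition and uses the same parity checks to show that the two affected positions still satisfy their weak (resp.\ strict) inequalities, so the map is an involution of the set. Your closing ``where care is needed'' paragraph is unnecessary, and its phrase about the swap exchanging positions' roles is inaccurate since only values move; the case analysis in Steps 1 and 2 already covers fixed points such as $\sigma(2i-1)=2i-1$ or $\sigma(2i)=2i$.
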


\begin{proof}
We first prove the statement for $\mathcal D_{2n+1,n+1}$. Recall that
\[
\mathcal D_{2n+1,n+1}
=
\{\sigma\in \SSS_{2n+1} : \sigma(2k-1)\ge 2k-1,\ \sigma(2k)\le 2k \text{ for all }k\}.
\]

Fix $i$ with $1\le i\le n$, and let $\tau=(2i-1,2i)$ be the transposition swapping the values $2i-1$ and $2i$. Define the map
\[
\Phi_\tau:\mathcal D_{2n+1,n+1}\to \mathcal D_{2n+1,n+1}
\]
by replacing the occurrence of the value $2i-1$ by $2i$, and the occurrence of the value $2i$ by $2i-1$. Equivalently, $\Phi_\tau(\sigma)=\tau\circ \sigma$.

We show that $\Phi_\tau(\sigma)\in \mathcal D_{2n+1,n+1}$ whenever $\sigma\in \mathcal D_{2n+1,n+1}$. Let $\pi=\Phi_\tau(\sigma)$. If $\sigma(j)\notin\{2i-1,2i\}$, then $\pi(j)=\sigma(j)$, so the defining inequalities are unchanged.

Suppose $\sigma(j)=2i$. Since $2i$ is even, replacing $2i$ by the smaller value $2i-1$ at an even position preserves the inequality $\pi(j)\le j$. At odd positions, replacing $2i$ by $2i-1$ can only occur when the bound already allows it (i.e., $j\le 2i-1$), so $\pi(j) = 2i-1 \ge j$ remains satisfied. Similarly, if $\sigma(j)=2i-1$, replacing it by the larger value $2i$ preserves the lower bound at odd positions, and preserves the upper bound at even positions since $j \ge 2i$. Thus $\pi\in \mathcal D_{2n+1,n+1}$.

Moreover, $\Phi_\tau$ is an involution since applying the same swap twice restores the original permutation. Hence $\Phi_\tau$ is a bijection of $\mathcal{D}_{2n+1,n-1}$ with itself. Therefore, the cardinality of $\mathcal D_{2n+1,n+1}$ is invariant under the value swap $(2i-1,2i)$.

For $DD_{2n}$, the defining inequalities are strict:
\[
\sigma(2k-1)>2k-1,
\qquad
\sigma(2k)<2k.
\]
Fix $i$ with $1\le i<n$. Swapping the adjacent values $2i$ and $2i+1$ defines an involution $\Phi_{(2i,2i+1)}$. Suppose $\sigma(j)=2i$. At an even position $j$, we have $\sigma(j)<j$, so $2i \le j-2$. Replacing $2i$ with $2i+1$ preserves $\pi(j) < j$ because $2i+1 \le j-1 < j$. At odd positions, replacing $2i$ with $2i+1$ trivially preserves the lower bound since $2i+1 > 2i$. A symmetric argument shows that replacing $2i+1$ by $2i$ preserves the strict bounds. Hence, this involution preserves the defining conditions, and the cardinality of $DD_{2n}$ is invariant under $(2i,2i+1)$.
\end{proof}

We define the first-letter refinement of the set $\mathcal D_{2n+1,n+1}$ by partitioning the permutations according to their initial entry. For $1 \le r \le 2n$, let 
\[
\mathcal{D}^r_{2n, 2n} = \{ \sigma \in \mathcal D_{2n+1,n+1} : \sigma(1) = r \}
\]
As established by the value transposition invariance under $(2k-1, 2k)$, the cardinalities of adjacent odd and even positions are identical, meaning $|\mathcal D^{2k-1}_{2n+1, n+1}| = |\mathcal D^{2k}_{2n+1, n+1}|$ for all $1 \le k \le n$. Thus, the refinement can be compactly represented by a sequence of unique blocks:
\[
U_{n,k} = |\mathcal D^{2k}_{2n+1, n+1}| = |\mathcal{D}^{2k-1}_{2n+1, n+1}| \quad \text{for } 1 \le k \le n
\]
We can give a recurrence for the D-permutations when we refine by the first letter. The following theorem due to Kreweras \cite{kreweras-genocchi} applies for Dumont derangements and we can translate it to D-permutations.
\begin{theorem}[Kreweras]
Let $U_{n,k}$ denote the number of permutations in $\mathcal{D}_{2n+1,n+1}$ starting with the value $2k$ (or $2k-1$) for $1 \le n$ and $1 \le k \le n$. The values of $U_{n,k}$ can be completely determined using the rules corresponding to the normalized triangle $T_2$ \cite{kreweras-genocchi}:
\begin{enumerate}
    \item The first entry of each row $n$ is equal to twice the sum of all entries in the preceding row, which corresponds exactly to the total size of the $\mathcal{D}_{2n-1,n}$ permutation set:
    \[
    U_{n,1} = 2 \sum_{j=1}^{n-1} U_{n-1,j} = |\mathcal{D}_{2n-2,2n-2}|
    \]
    
    \item The other entries satisfy the linear recurrence relation:
    \[
    U_{n,k} = 2U_{n,k-1} - U_{n,k-2} - 2U_{n-1,k-1} - 2U_{n-1,k-2}
    \]
\end{enumerate}
\end{theorem}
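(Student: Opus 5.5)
The plan is to transport Kreweras' triangle for Dumont derangements to $\mathcal{D}_{2n+1,n+1}$ through the bijection $\phi$ of the preceding theorem, and to track what happens to the first letter.

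\textbf{Effect of $\phi$ on the first letter.} By construction, $\phi(\sigma)(1)=a_1+1$. So $\phi$ restricts to a bijection between
\[
\{\sigma\in\mathcal D_{2n+1,n+1}:\sigma(1)=r\}
\quad\text{and}\quad
\{\tau\in DD_{2n+2}:\tau(1)=r+1\}.
\]
The first-letter refinement of the D-permutations is therefore the first-letter refinement of Dumont derangements on $[2n+2]$, shifted by one.

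\textbf{Matching the block structure.} On the D-permutation side, the preceding proposition gives invariance under the value swaps $(2k-1,2k)$, so first letters $2k-1$ and $2k$ are equinumerous. Shifting by one, these become first letters $2k$ and $2k+1$ of Dumont derangements. This is exactly the pairing given by the invariance of $DD_{2n+2}$ under $(2k,2k+1)$. Hence $U_{n,k}$ equals the number of $\tau\in DD_{2n+2}$ with $\tau(1)=2k$ (equivalently $\tau(1)=2k+1$), for $1\le k\le n$. Kreweras' normalized triangle $T_2$ in \cite{kreweras-genocchi} is indexed by precisely this refinement of Dumont derangements by the first letter, grouped in these pairs. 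Invoking his rules therefore gives:
\begin{itemize}
\item the first entry of each row equals twice the previous row sum;
\item the remaining entries satisfy the four-term linear recurrence.
\end{itemize}

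\textbf{Identifying the first entry with a set size.} It remains to check $U_{n,1}=2\sum_j U_{n-1,j}=|\mathcal D_{2n-1,n}|$. Summing over all first letters gives $|\mathcal D_{2n-1,n}|=\sum_{r}|\mathcal D^{r}_{2n-1,n}|=2\sum_j U_{n-1,j}$, using the pairing $2k-1\leftrightarrow 2k$ again. For a direct check, I would also verify $U_{n,1}$ by deleting the forced small entries. A permutation with $\sigma(1)=1$ has the fixed point $1$; its deletion and standardization should land in a set equinumerous with $\mathcal D_{2n-1,n}$.

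\textbf{Main obstacle.} The hardest step is pinning down Kreweras' indexing conventions: which row and column of $T_2$ correspond to $DD_{2n+2}$ with first letter $2k$, and whether his normalization divides by powers of two. I would first compute small cases directly (for example $n=1,2$: $U_{1,1}=1$ and row $2$ equal to $(2,1)$) and compare them with the published triangle to fix the offsets. Only then would I state the translated recurrence, adjusting the constants in the recurrence if the normalization differs.
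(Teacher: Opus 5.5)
Your route is the paper's route. The paper's only justification is to transport Kreweras' first-letter triangle for Dumont derangements through $\phi$, together with the value-swap proposition. Your bookkeeping of that transport is correct: first letter $r\mapsto r+1$, the pairs $\{2k-1,2k\}$ matching the $DD$ pairs $\{2k,2k+1\}$, and $|\mathcal D_{2n-1,n}|=2\sum_j U_{n-1,j}$, which is what the statement's typo $|\mathcal D_{2n-2,2n-2}|$ should read.

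The step that would fail is your calibration, because the data you plan to calibrate against is wrong: row $2$ is $(2,2)$, not $(2,1)$.
\begin{itemize}
\item The eight elements of $\mathcal D_{5,3}$ are $12345, 12435, 21345, 21435, 31425, 32415, 41325, 42315$, so each first letter $1,2,3,4$ occurs exactly twice.
\item With $(2,1)$ neither rule holds. First, $2(U_{2,1}+U_{2,2})$ would be $6\ne|\mathcal D_{5,3}|=8$. Second, the recurrence forces $U_{2,2}=2\cdot 2-2\cdot 1=2$.
\item Following your plan, you would therefore ``adjust the constants'' of a recurrence that is in fact correct as stated.
\item The actual rows are $(1)$, $(2,2)$, $(8,12,8)$, $(56,96,96,56)$. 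They satisfy (i) and (ii) verbatim, with the convention $U_{n,0}=0$.
\item Dividing row $n$ by $2^{n-1}$ gives $1;\ 1,1;\ 2,3,2;\ 7,12,12,7$. This obeys ``first entry $=$ previous row sum'' and $u_{n,k}=2u_{n,k-1}-u_{n,k-2}-u_{n-1,k-1}-u_{n-1,k-2}$, which is the normalization to compare against the published triangle.
\end{itemize}

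Your direct check of (i) works, but it needs one more step. Deleting the fixed point $1$ and standardizing lands in the reverse-parity class on $[2n-1]$, where $\tau(j)\le j$ for odd $j$ and $\tau(j)\ge j$ for even $j$. There $\tau(1)=1$ is forced again; that is, $\sigma(1)=1$ forces $\sigma(2)=2$. Deleting both fixed points and subtracting $2$ gives exactly the D-permutations of $[2n-2]$. This is a bijective proof of $U_{n,1}=|\mathcal D_{2n-1,n}|$, which is more than the paper provides for (i).
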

We end this section with an open question.

\begin{question}
    Can we enumerate the cardinalities of the sets $\mathcal{D}_{2n-1,j}$ for other values of $j$? More specifically, can we give recurrences for the cardinalities of the sets $\mathcal{D}^r_{2n-1,j}$ for other values of $j$?
\end{question}
\section{An equidistribution result on $S_{10}$}\label{sec: conj}
We begin this section with a small proposition. 
\begin{proposition}
    The number of permutations $\sigma\in \SSS_{2n+1}$ such that $\sigma(i)\le i$ if $i$ is even is the number of permutations $\pi\in \SSS_{2n+1}$ where descent tops are only odd.
\end{proposition}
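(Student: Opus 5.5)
The plan is to compose two bijections: first a relabelling that turns the condition ``$\sigma(i)\le i$ for even $i$'' into ``$\sigma(i)\ge i$ for even $i$'', and then Foata's fundamental transformation, which turns the set of strict deficiencies of a permutation into the set of descent tops of a word.

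\textbf{Step 1 (reflection).} Let $w_0(i)=2n+2-i$ on $[2n+1]$. Since $2n+2$ is even, $w_0$ preserves parity. It also reverses order, so for $\sigma'=w_0\sigma w_0$ we have $\sigma'(j)\ge j$ if and only if $\sigma(w_0(j))\le w_0(j)$. Hence $\sigma\mapsto w_0\sigma w_0$ is a bijection from $\{\sigma: \sigma(i)\le i \text{ for even } i\}$ onto $\{\sigma': \sigma'(i)\ge i \text{ for even } i\}$. Equivalently, it lands on the permutations whose set of strict deficiencies $\{a:\sigma'(a)<a\}$ consists only of odd numbers.

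\textbf{Step 2 (fundamental transformation).} Write $\sigma'$ in cycle notation with each cycle beginning at its largest element, and order the cycles by increasing maxima. Erasing the parentheses gives a word $\pi\in\SSS_{2n+1}$; this is Foata's bijection, with inverse given by cutting $\pi$ before each left-to-right maximum.

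I then check which positions of $\pi$ are descents.
\begin{enumerate}
\item If $a$ is immediately followed by $b$ inside the same cycle, then $b=\sigma'(a)$. This pair is a descent exactly when $\sigma'(a)<a$.
\item If $a$ is the last letter of its cycle, then $\sigma'(a)$ is the cycle maximum, so $\sigma'(a)\ge a$. This covers fixed points as well. In $\pi$, such an $a$ is either the final letter or is followed by the maximum of the next cycle. That maximum exceeds every earlier letter, so this position is an ascent.
\end{enumerate}
Together these show that the set of descent tops of $\pi$ equals $\{a:\sigma'(a)<a\}$. Consequently, all descent tops of $\pi$ are odd if and only if $\sigma'(a)\ge a$ for every even $a$.

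\textbf{Step 3 (composition).} Composing Steps 1 and 2 gives the required bijection, and in fact it carries the full deficiency set of $w_0\sigma w_0$ onto the descent-top set. The only point needing care is the boundary analysis in Step 2. Choosing ``largest element first'' together with ``increasing maxima'' is exactly what makes every cycle boundary an ascent, so that the descent tops match the deficiencies rather than the excedances. If instead one used the opposite convention (smallest element first, decreasing minima), the descents would record excedances, and Step 1 would have to be dropped or adjusted. Nothing else in the argument is delicate.
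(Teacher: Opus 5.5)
Your proof is correct. Its skeleton is the same as the paper's: first flip the even-position condition from $\sigma(i)\le i$ to $\sigma(i)\ge i$, then apply Foata's fundamental transformation so that strict deficiencies become descent tops. The difference lies in the flip, and there your version is the right one.

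The paper flips by taking the group inverse. It claims that $\sigma(i)\le i$ for even $i$ forces $\sigma^{-1}(i)\ge i$ for even $i$. This is false, because inversion turns a constraint on positions into a constraint on values: it only gives $\sigma^{-1}(v)\ge v$ for $v$ in the image of the even positions. For example, $\sigma=213\in\SSS_3$ satisfies $\sigma(2)=1\le 2$, but $\sigma^{-1}=213$ has $\sigma^{-1}(2)=1<2$.

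The paper's argument also never uses that the length is odd, yet the analogous statement fails in $\SSS_2$. There both permutations satisfy $\sigma(2)\le 2$, but only $12$ has no even descent top. Your conjugation by the parity-preserving order reversal $i\mapsto 2n+2-i$ is exactly where the odd length enters.

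You also fix the convention for the fundamental transformation (largest element first, cycles ordered by increasing maxima) and check the cycle boundaries explicitly; the paper leaves both implicit. As a result you get the sharper statement that the deficiency set of $w_0\sigma w_0$ equals the descent-top set of its image.

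One minor point: your closing aside about the opposite convention is imprecise. With smallest element first and decreasing minima, it is the ascent bottoms that equal the excedance set, and every cycle boundary becomes a descent. That remark is not used anywhere in your argument.
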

\begin{proof}
    Let $\sigma$ be a permutation in $\SSS_{2n+1}$ such that $\sigma(i)\le i$ if $i$ is even. So, the group theoretic inverse $\sigma^{-1}$ is a permutation where $\sigma^{-1}(i)\ge i$ if $i$ is even. Since the group inverse is a bijection, the number of permutations such that $i\le \sigma(i)$ if $i$ is even is the same as the number of permutations such that $\sigma(i)\le i$ if $i$ is even. We apply the Foata first fundamental transformation on this permutation to get a permutation where the descent tops are never even.

    Similarly, we can use the inverse of the fundamental transformation to a permutation with no even descent tops to get a permutation such that $\sigma(i)\ge i$ is $i$ is even.
\end{proof}
Therefore, Corollary \ref{cor: regions-hom-shi} gives an alternate proof of \cite[Theorem $2$]{kitaev-remmel-parity} as having no even descent tops is equivalent to having only odd descent tops. Now, we devote the rest of this section to proving Theorem \ref{thm: main}. 

\subsection{The involution that swaps $S_{10}, S_{12}$ while fixing $S_{17}$}
 To obtain a permutation $p_{new}$ of length $n+1$ from a permutation $p$ of length $n$, we can insert the element $``n+1"$ before the element $p_1$, between elements $p_i$ and $p_{i+1}$ or after $p_n$. We call these $n+1$ positions of insertion ``positions'' or ``gaps''. Let us make some definitions to keep track of these statistics $T_1,T_2,T_3$ more closely.

\begin{enumerate}
    \item $\cl{A}(p) := |\{ i \in [n-1] : p_i, p_{i+1} \in \mathbb{E} \} \cup \{ 0 : p_1 \in \mathbb{E} \}|.$
    \item $\cl{B}(p) := |\{ i \in [n-1] : p_i > p_{i+1} \mbox{ and } p_i \in \mathbb{O} \} \cup \{ n : p_n \in \mathbb{O} \}|.$
    \item $\cl{C}(p) := |\{ i \in [n-1] : (p_i \in \mathbb{E} \mbox{ and } p_{i+1} \in \mathbb{O}) \mbox{ or } (p_i < p_{i+1} \mbox{ and } p_i, p_{i+1} \in \mathbb{O}) \} \cup \{ 0 : p_1 \in \mathbb{O} \}|.$
    \item $\cl{D}(p) := |\{ i \in [n-1] : p_i < p_{i+1}, \ p_i \in \mathbb{O}, \ p_{i+1} \in \mathbb{E} \}|.$
    \item $\cl{E}(p) := |\{ n : p_n \in \mathbb{E} \}|.$
\end{enumerate}
 We classify these gaps into types according to the statistics defined above. Specifically, an $\cl{A}$-gap is a gap whose location is exactly one of the positions counted by $\cl{A}(p)$, the gap between an even $p_j$ and an even $p_{j+1}$, or the starting gap if $p_1$ is even. We define $\cl{B}$-gaps, $\cl{C}$-gaps, $\cl{D}$-gaps, and $\cl{E}$-gaps analogously based on the conditions for $\cl{B}(p)$ through $\cl{E}(p)$. Similarly, for $m \in \{1, 2, 3\}$, a $T_m$-gap denotes a gap strictly between adjacent elements $p_j$ and $p_{j+1}$ that satisfies the condition for $T_m(p)$. For instance, a $T_1$-gap is the position between an odd $p_j$ and an even $p_{j+1}$ where $p_j > p_{j+1}$. Whenever we refer to the $k$th gap of a certain type, we assume they are read from left to right.
We construct an involution that witnesses this equidistribution recursively.

\begin{lemma} (Odd insertions)\label{lem: odd-insertion}
Let $n \geq 2$ be an even integer. Let $p, p'\in \SSS_n$ such that $p'=f_n(p)$, where $f_n$ has the properties mentioned in Lemma \ref{lem: even-insertion}. Consider the map $f_{n+1}:\SSS_{n+1}\rightarrow \SSS_{n+1}$ that sends:
\begin{enumerate}
    \item the permutation with $n+1$ inserted in the $k$th $\cl{A}$-gap in $p$ to the permutation with $n+1$ inserted in the $k$th $\cl{B}$-gap in $p'$,
    \item the permutation with $n+1$ inserted in the $k$th $\cl{B}$-gap in $p$ to the permutation with $n+1$ inserted in the $k$th $\cl{A}$-gap in $p'$,
    \item the permutation with $n+1$ inserted in the $k$th $\cl{C}$-gap (respectively, $\cl{D},\cl{E}$-gaps) in $p$ to the permutation with $n+1$ inserted in the $k$th $\cl{C}$-gap (respectively, $\cl{D},\cl{E}$-gaps) in $p'$.
\end{enumerate} 
For every $p_{new} \in \SSS_{n+1}$ and $p_{new}':=f_{n+1}(p_{new})$, we have:
\begin{enumerate}\label{rel: odd}
        \item $\cl{A}(p_{new})=\cl{B}(p_{new}')-1$,
        \item $\cl{B}(p_{new})=\cl{A}(p_{new}')+1$,
        \item $\cl{C}(p_{new})=\cl{C}(p_{new}')$,
        \item $\cl{D}(p_{new})=\cl{D}(p_{new}')$,
        \item $\cl{E}(p_{new})=\cl{E}(p_{new}')$.
    \end{enumerate}
Finally, we have $(T_1(p_{new}),T_2(p_{new}),T_3(p_{new}))=(T_2(p_{new}'),T_1(p_{new}'),T_3(p_{new}'))$.
\end{lemma}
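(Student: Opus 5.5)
The plan is a local case analysis on the inserted gap, carried out once for $p$ and once for $p'$, and then compared using the hypotheses on $f_n$ from Lemma~\ref{lem: even-insertion}. I assume those hypotheses include: $f_n$ is a bijection of $\SSS_n$; it satisfies $\cl{A}(p)=\cl{B}(p')$, $\cl{B}(p)=\cl{A}(p')$, and $\cl{C},\cl{D},\cl{E}$ are preserved; and $(T_1,T_2,T_3)(p)=(T_2,T_1,T_3)(p')$.

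\textbf{Step 1: $f_{n+1}$ is a well-defined bijection.} The $n+1$ gaps of $p$ are partitioned into $\cl{A},\cl{B},\cl{C},\cl{D},\cl{E}$-gaps. Checking this is a short case check on the parities of $p_i,p_{i+1}$ and on whether $p_i>p_{i+1}$, together with the two boundary gaps. Hence $\cl{A}(p)+\dots+\cl{E}(p)=n+1$. Since $\cl{A}(p)=\cl{B}(p')$ and $\cl{B}(p)=\cl{A}(p')$, the ``$k$th gap'' prescriptions make sense. Every $p_{new}\in\SSS_{n+1}$ arises from a unique pair (its restriction $p$, a gap), and $f_n$ is a bijection. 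So $f_{n+1}$ is a bijection, and it is an involution if $f_n$ is.

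\textbf{Step 2: effect of inserting the odd maximum on $T_1,T_2,T_3$.} Inserting $n+1$ between $p_i$ and $p_{i+1}$ destroys the pair $(p_i,p_{i+1})$ and creates the pairs $(p_i,n+1)$ and $(n+1,p_{i+1})$. The pair $(p_i,n+1)$ is an ascent, so it is a $T_2$-pair if $p_i$ is odd and contributes nothing otherwise. The pair $(n+1,p_{i+1})$ is a descent with odd top, so it is $T_1$ if $p_{i+1}$ is even and $T_3$ if $p_{i+1}$ is odd. Tabulating by gap type:
\begin{itemize}
\item an $\cl{A}$-gap gives $T_1\mapsto T_1+1$;
\item a $\cl{B}$-gap gives $T_2\mapsto T_2+1$, because the destroyed odd-top descent is recreated as $(n+1,p_{i+1})$;
\item a $\cl{C}$-gap gives $T_3\mapsto T_3+1$;
\item a $\cl{D}$-gap gives $T_1+1$ and $T_2+1$;
\item an $\cl{E}$-gap changes nothing.
\end{itemize}
Since $\cl{A}$-gaps of $p$ go to $\cl{B}$-gaps of $p'$ (and vice versa), and $\cl{C},\cl{D},\cl{E}$ go to the same types, the swap $(T_1,T_2,T_3)(p)=(T_2,T_1,T_3)(p')$ propagates to $p_{new},p'_{new}$.

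\textbf{Step 3: update $\cl{A},\dots,\cl{E}$.} In the same way, I will compute $\cl{A}(p_{new}),\dots,\cl{E}(p_{new})$ from $\cl{A}(p),\dots,\cl{E}(p)$ and the gap type. A new odd maximum at position $j$ creates a $\cl{B}$-gap right after it, since it is either an odd descent top or the final odd letter. It also creates a $\cl{C}$- or $\cl{D}$-type gap before it, or the starting $\cl{C}$-gap, and it removes the one gap it was placed into. For instance, insertion into an $\cl{A}$-gap replaces an even--even gap by an (even, odd) gap, which is of type $\cl{C}$, and by an (odd-top descent) gap, which is of type $\cl{B}$. The net effect is $\cl{A}-1$, $\cl{B}+1$, $\cl{C}+1$.

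I would write the analogous table row by row for all five gap types, including the boundary gaps. Then I would check that, given $\cl{A}(p)=\cl{B}(p')$, $\cl{B}(p)=\cl{A}(p')$ and $\cl{C},\cl{D},\cl{E}$ equal, the paired insertions produce exactly the shifted relations $\cl{A}(p_{new})=\cl{B}(p'_{new})-1$ and $\cl{B}(p_{new})=\cl{A}(p'_{new})+1$, with $\cl{C},\cl{D},\cl{E}$ equal. The asymmetry $\pm1$ comes from the new $\cl{B}$-gap that is always created after $n+1$, with no matching new $\cl{A}$-gap.

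\textbf{Main obstacle.} The bookkeeping in Step 3 is the main obstacle, specifically the boundary gaps. These are the start gap (counted in $\cl{A}$ or $\cl{C}$ according to the parity of $p_1$) and the end gap (counted in $\cl{B}$ or $\cl{E}$). Inserting there changes $p_1$ or $p_n$ to $n+1$, so the classification of the neighbouring gaps shifts. I would handle these two cases separately and explicitly.

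\textbf{Invariance of $S_{17}$.} Inserting $n+1$ changes $S_{17}$ only when $S_{17}(p)=n$ and the insertion is at the end. In that case $p$ is the identity, the end gap is an $\cl{E}$-gap, and the correspondence of $\cl{E}$-gaps handles it, provided $f_n$ fixes the identity.
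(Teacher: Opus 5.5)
Your proposal is correct and is essentially the paper's argument. Inserting the odd maximum $n+1$ into a gap of type $X\in\{\mathcal{A},\dots,\mathcal{E}\}$ lowers $X$ by one, raises $\mathcal{B}$ and $\mathcal{C}$ by one each, and changes $(T_1,T_2,T_3)$ exactly as in your Step~2 table; the hypotheses on $f_n$ then transfer directly, and your extra remarks on well-definedness, bijectivity and $S_{17}$ are also sound. One small slip in Step~3: the gap immediately before $n+1$ is always a $\mathcal{C}$-gap, never a $\mathcal{D}$-gap, because its right entry $n+1$ is odd; this is what makes the update rule uniform over all five gap types, boundary gaps included.
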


\begin{proof}
    It is easy to verify that inserting $n+1$ into any type gap reduces the number of gaps of that type by $1$ while increasing the number of $\cl{B}$-gaps by $1$ and $\cl{C}$-gaps by $1$. Finally, inserting in an $\cl{A}$-gap increases the number of $T_1$ by $1$ without changing the others while inserting in a $\cl{B}$-gap increases the number of $T_2$ by $1$ without changing the others. The result follows.
\end{proof}

\begin{lemma} (Even insertions)\label{lem: even-insertion}
Let $n \geq 1$ be an odd integer. Let $p, p' \in \SSS_n$ such that $p'=f_n(p)$, where $f_n$ has the properties mentioned in Lemma \ref{lem: odd-insertion}. Consider the map $f_{n+1}:\SSS_{n+1}\rightarrow \SSS_{n+1}$ that sends:
\begin{enumerate}
    \item the permutation with $n+1$ inserted in the $k$th $T_1$-gap in $p$ to the permutation with $n+1$ inserted in the $k$th $T_2$-gap in $p'$,
    \item the permutation with $n+1$ inserted in the $k$th $T_2$-gap in $p$ to the permutation with $n+1$ inserted in the $k$th $T_1$-gap in $p'$,
    \item the permutation with $n+1$ inserted in the first gap in $p$ to the permutation with $n+1$ inserted in the first gap in $p'$,
    \item the permutation with $n+1$ inserted in the $k$th $T_3$-gap in $p$ to the permutation with $n+1$ inserted in the $k$th $T_3$-gap in $p'$,
    \item the permutation with $n+1$ inserted in the last gap in $p$ to the permutation with $n+1$ inserted in the last gap in $p'$,
    \item the permutation with $n+1$ inserted in the $k$th gap that is not one of the above in $p$ to the permutation with $n+1$ inserted in the $k$th gap that is not one of the above in $p'$.
\end{enumerate}
Then, for every $p_{new}\in \SSS_{n+1}$ and $p_{new}':=f_{n+1}(p_{new})$, we have:
\begin{enumerate}\label{rel: even}
        \item $\cl{A}(p_{new})=\cl{B}(p_{new}')$,
        \item $\cl{B}(p_{new})=\cl{A}(p_{new}')$,
        \item $\cl{C}(p_{new})=\cl{C}(p_{new}')$,
        \item $\cl{D}(p_{new})=\cl{D}(p_{new}')$,
        \item $\cl{E}(p_{new})=\cl{E}(p_{new}')$.
    \end{enumerate}
Finally, we have $(T_1(p_{new}),T_2(p_{new}),T_3(p_{new}))=(T_2(p_{new}'),T_1(p_{new}'),T_3(p_{new}'))$.
\end{lemma}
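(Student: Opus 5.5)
The plan is to verify the lemma gap by gap. Inserting $N=n+1$ changes only a bounded set of adjacencies, so we first tabulate the change vector $(\Delta\cl{A},\Delta\cl{B},\Delta\cl{C},\Delta\cl{D},\Delta\cl{E},\Delta T_1,\Delta T_2,\Delta T_3)$ for every kind of gap. We then combine this table with the inductive hypothesis from Lemma~\ref{lem: odd-insertion}, namely $\cl{A}(p)=\cl{B}(p')-1$, $\cl{B}(p)=\cl{A}(p')+1$, $\cl{C}(p)=\cl{C}(p')$, $\cl{D}(p)=\cl{D}(p')$, $\cl{E}(p)=\cl{E}(p')$ and $(T_1,T_2,T_3)(p)=(T_2,T_1,T_3)(p')$.

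First we check that $f_{n+1}$ is well defined. Since $T_1(p)=T_2(p')$ and $T_2(p)=T_1(p')$, the $k$th $T_1$-gap of $p$ exists if and only if the $k$th $T_2$-gap of $p'$ exists, and symmetrically. The same holds for $T_3$, and for the first and last gaps. Both permutations have $n+1$ gaps, so the leftover ``other'' gaps also come in equal numbers. A bijection on the level of $p$ lifts to a bijection on $\SSS_{n+1}$, because each permutation in $\SSS_{n+1}$ arises from exactly one pair ($p$, gap).

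The key observation for the $T$-statistics is the following. Since $N$ is even and largest, every new adjacency is of the form $(x,N)$, an ascent into an even letter, or $(N,y)$, a descent from an even letter. Neither kind counts for $T_1$, $T_2$ or $T_3$. So inserting into an interior gap destroys exactly the $T$-status of that gap, if it has one, and creates nothing. Hence a $T_1$-gap insertion lowers $T_1$ by one, a $T_2$-gap insertion lowers $T_2$ by one, a $T_3$-gap insertion lowers $T_3$ by one, and all other insertions leave the $T$'s fixed. The swapped pairing in the definition of $f_{n+1}$ then gives $(T_1,T_2,T_3)(p_{new})=(T_2,T_1,T_3)(p'_{new})$ directly.

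For $\cl{A},\ldots,\cl{E}$, we tabulate the removed pair $(x,y)$ and the added pairs $(x,N)$ and $(N,y)$. The added pair $(x,N)$ is a $\cl{D}$-adjacency if $x$ is odd and an $\cl{A}$-adjacency if $x$ is even. The added pair $(N,y)$ is a $\cl{C}$-adjacency if $y$ is odd and an $\cl{A}$-adjacency if $y$ is even. This yields the following changes.
\begin{itemize}
\item A $T_1$-gap gives $\Delta\cl{A}=+1$, $\Delta\cl{B}=-1$, $\Delta\cl{D}=+1$.
\item A $T_2$-gap gives $\Delta\cl{D}=+1$ only.
\item A $T_3$-gap gives $\Delta\cl{B}=-1$, $\Delta\cl{C}=+1$, $\Delta\cl{D}=+1$.
\item Each remaining interior gap type (even--even, even--odd, odd$<$even) gives $\Delta\cl{A}=+1$ only.
\end{itemize}
From this, cases (i), (ii), (iv) and (vi) follow by arithmetic. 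For example, a $T_1$-gap of $p$ paired with a $T_2$-gap of $p'$ gives $\cl{A}(p_{new})=\cl{A}(p)+1=\cl{B}(p')=\cl{B}(p'_{new})$ and $\cl{B}(p_{new})=\cl{B}(p)-1=\cl{A}(p')=\cl{A}(p'_{new})$. In every case the constant offsets $\pm1$ of the hypothesis are absorbed exactly.

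The main obstacle is the first and last gaps, cases (iii) and (v). Their effect depends on the parity of $p_1$ (respectively $p_n$): the start term of $\cl{A}$ or of $\cl{C}$ is traded, and the end term of $\cl{B}$ or of $\cl{E}$ is traded. The end case is controlled by $\cl{E}(p)=\cl{E}(p')$, which forces $p_n$ and $p'_n$ to have the same parity, so the table entries agree. For the first letter, I would first try to strengthen the inductive hypothesis of both lemmas to include ``$p_1$ and $p'_1$ have the same parity''. I would then check that this is preserved: insertion at the first gap makes both first letters $N$, and all other insertions leave the first letters unchanged. If that fails, I would instead check directly that both parities of $p_1$ produce the same net change in $\cl{A}-\cl{B}$ and $\cl{C}$, which suffices for the stated relations.
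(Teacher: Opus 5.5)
Your strategy is the same as the paper's. You tabulate how inserting the even maximum $N=n+1$ into each type of gap changes $(\cl{A},\dots,\cl{E},T_1,T_2,T_3)$, and then absorb the offsets $\cl{A}(p)=\cl{B}(p')-1$ and $\cl{B}(p)=\cl{A}(p')+1$. Your interior-gap table agrees with the paper's Cases 1, 2 and 4. Your argument for the $T$-statistics is a cleaner justification of what the paper only asserts: no new adjacency $(x,N)$ or $(N,y)$ with $N$ even is counted by $T_1$, $T_2$ or $T_3$.

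\textbf{The step that fails.} Your first plan for case (iii) does not work. The strengthened hypothesis ``$p_1$ and $p'_1$ have the same parity'' is not preserved by the odd-insertion lemma. There, the first gap of $p$ (an $\cl{A}$-gap when $p_1$ is even) is paired with a $\cl{B}$-gap of $p'$, and a $\cl{B}$-gap is never the first gap. Concretely, $f_2$ is the identity. In $21$ the only $\cl{A}$-gap is the front and the only $\cl{B}$-gap is the end, so $f_3(321)=213$; this is the $n=3$ row of Table~\ref{tab:bijection_trace}. That pair is a legitimate input to the present lemma, and its first letters have opposite parity.

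\textbf{Your fallback works, and the first gap is no obstacle.} Inserting $N$ at the front creates a new even start, which adds one $\cl{A}$. The old start term becomes the adjacency $(N,p_1)$. This is even--even (an $\cl{A}$) if $p_1$ is even and even--odd (a $\cl{C}$) if $p_1$ is odd, which is exactly the type of the old start term. Hence the change is $\Delta\cl{A}=+1$ and nothing else, whatever the parity of $p_1$; this is what the paper records.

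What you need is that the \emph{whole} change vector is parity-independent and satisfies $\Delta\cl{A}-\Delta\cl{B}=1$ on both sides. Equality of the change in $\cl{A}-\cl{B}$ alone would not give $\cl{A}(p_{new})=\cl{B}(p'_{new})$ and $\cl{B}(p_{new})=\cl{A}(p'_{new})$ separately. With this, and your last-gap analysis via $\cl{E}(p)=\cl{E}(p')$, the proof is complete and matches the paper's.
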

\begin{proof}
Let $n$ be an odd integer, making $n+1$ an even integer and strictly the maximum element in the permutation. From the premise of the map $f_n$ (Lemma \ref{lem: odd-insertion}), we assume the following initial state for $p$ and $p'$:
\begin{enumerate}
    \item $\cl{A}(p) = \cl{B}(p') - 1$
    \item $\cl{B}(p) = \cl{A}(p') + 1$
    \item $\cl{C}(p) = \cl{C}(p')$, $\cl{D}(p) = \cl{D}(p')$, $\cl{E}(p) = \cl{E}(p')$
    \item $(T_1(p), T_2(p), T_3(p)) = (T_2(p'), T_1(p'), T_3(p'))$
\end{enumerate}

We analyse the changes in the statistics case by case.

\noindent \textbf{Case 1:} \\
Suppose $n+1$ is inserted into a $T_1$-gap in $p$ and the corresponding $T_2$-gap in $p'$.
\begin{itemize}
    \item \textbf{Inserting into $T_1$:} A $T_1$-gap implies $x$ is odd, $y$ is even, and $x > y$. This destroys a $T_1$-gap (and thus a $\cl{B}$-gap). We create $(x, n+1)$ with odd $x <$ even $n+1$, which is a $\cl{D}$-gap. We create $(n+1, y)$ with $n+1 >y$ ($y$ even), which is an $\cl{A}$-gap. The net change in $p$ is $\Delta\cl{A}=+1, \Delta\cl{B}=-1, \Delta\cl{D}=+1$.
    \item \textbf{Inserting into $T_2$:} A $T_2$-gap implies $x$ is odd, $y$ is odd, and $x < y$. This destroys a $T_2$-gap (a $\cl{C}$-gap). We create $(x, n+1)$, which is a $\cl{D}$-gap. We create $(n+1, y)$ with $n+1 >y$ ($y$ odd), which is a $\cl{C}$-gap. The net change in $p'$ is $\Delta\cl{C}=0, \Delta\cl{D}=+1$.
    \item \textbf{Verifying Equations:}
    \begin{eqnarray*}
        \cl{A}(p_{new}) &=& \cl{A}(p) + 1 = (\cl{B}(p') - 1) + 1 = \cl{B}(p') = \cl{B}(p'_{new}),\\
        \cl{B}(p_{new}) &=
        &\cl{B}(p) - 1 = (\cl{A}(p') + 1) - 1 = \cl{A}(p') = \cl{A}(p'_{new}).
    \end{eqnarray*}
    $\cl{C}, \cl{D},$ and $\cl{E}$ remain equal in both. By symmetry, the exact reverse holds when $T_2$ in $p$ is mapped to $T_1$ in $p'$.
\end{itemize}

\noindent \textbf{Case 2:} \\
Suppose $n+1$ is inserted into a $T_3$-gap in both $p$ and $p'$.
\begin{itemize}
    \item A $T_3$-gap implies $x$ is odd, $y$ is odd, and $x > y$. This destroys a $T_3$-gap (a $\cl{B}$-gap). We create $(x, n+1)$ which is a $\cl{D}$-gap, and $(n+1, y)$ which is a $\cl{C}$-gap.
    \item The net change for both permutations is $\Delta\cl{B}=-1, \Delta\cl{C}=+1, \Delta\cl{D}=+1$.
    \item \textbf{Verifying Equations:} \\
    Applying $\Delta\cl{B}=-1$ to both sides of the initial equations yields $\cl{A}(p_{new})=\cl{B}(p'_{new})$ and $\cl{B}(p_{new})=\cl{A}(p'_{new})$. $\cl{C}, \cl{D},$ and $\cl{E}$ change in the same way for both.
\end{itemize}

\noindent \textbf{Case 3:} \\
Suppose $n+1$ is inserted at the end of both permutations. Let the original last element be $x$. Because $\cl{E}(p) = \cl{E}(p')$, both permutations terminate with an element of the same parity.
\begin{itemize}
    \item \textbf{If $x$ is odd (Odd End):} Removes a $\cl{B}$-gap. Creates $(x, n+1)$, a $\cl{D}$-gap. Create an even end, an $\cl{E}$-gap. The net change for both is $\Delta\cl{B}=-1, \Delta\cl{D}=+1, \Delta\cl{E}=+1$. As in Case 2, dropping $\cl{B}$ by $1$ on both sides perfectly restores cross-symmetry.
    \item \textbf{If $x$ is even (Even End):} Removes an $\cl{E}$-gap. Creates $(x, n+1)$, an $\cl{A}$-gap. Create an even end, an $\cl{E}$-gap. The net change for both is $\Delta\cl{A}=+1, \Delta\cl{E}=0$.
    \item \textbf{Verifying Equations:}
    \begin{eqnarray*}
        \cl{A}(p_{new}) &=& \cl{A}(p) + 1 = \cl{B}(p') = \cl{B}(p'_{new}),\\
        \cl{B}(p_{new}) &=& \cl{B}(p) = \cl{A}(p') + 1 = \cl{A}(p'_{new}).
    \end{eqnarray*}
\end{itemize}

\noindent \textbf{Case 4:} \\
We evaluate the insertion of an even $n+1$ into the first gap, Even-Even gaps, Even-Odd gaps, and $\cl{D}$-gaps (odd-even ascents). This will cover all the remaining cases.
\begin{itemize}
    \item \textbf{First gap:} If $y$ is odd (removes a $\cl{C}$, creates a $\cl{A}$ start, creates a $\cl{C}$). If $y$ is even (removes a $\cl{A}$, creates a $\cl{A}$ start, creates a $\cl{A}$). The net change is universally $\Delta\cl{A}=+1$.
    \item \textbf{Even-Even:} Removes a $\cl{A}$, creates two $\cl{A}$-gaps. Net change: $\Delta\cl{A}=+1$.
    \item \textbf{Even-Odd:} Removes a $\cl{C}$, creates $\cl{A}$ and $\cl{C}$. Net change: $\Delta\cl{A}=+1$.
    \item \textbf{$\cl{D}$-gap:} Removes a $\cl{D}$, creates $\cl{D}$ and $\cl{A}$. Net change: $\Delta\cl{A}=+1$.
\end{itemize}
In both permutations, a strict net change of $\Delta\cl{A}=+1$ occurs. Applying this to both sides of the initial equations gives $\cl{A}(p_{new})=\cl{B}(p'_{new})$ and $\cl{B}(p_{new})=\cl{A}(p'_{new})$.

It is easy to verify that $(T_1(p_{new}),T_2(p_{new}),T_3(p_{new}))=(T_2(p_{new}'),T_1(p_{new}'),T_3(p_{new}'))$ occurs.
\end{proof}
We are now equipped to prove Theorem \ref{thm: main}.
\begin{proof}[Proof of Theorem \ref{thm: main}]
    We prove the stronger claim that there exist maps $f_n:\SSS_n\rightarrow \SSS_n$ with $$(T_1(p),T_2(p),T_3(p),S_{17}(p))=(T_2(p'),T_1(p'),T_3(p'),S_{17}(p'))$$ for each $p\in \SSS_n$ and $p':=f_n(p)$ such that relations in \ref{rel: odd} are satisfied when $n$ is odd and the relations in \ref{rel: even} are satisfied when $n$ is even. The proof is by induction.
    Clearly, $f_1$ is well defined by sending $1$ to itself and $\cl{A}(1),\cl{B}(1),\cl{C}(1),\cl{D}(1),\cl{E}(1)$ are $0,1,1,0,0$ respectively. This satisfies the relations mentioned in \ref{rel: odd}. This gives us our base case.
    
    If we assume that the map exists for some $n_0$, then Lemmas \ref{lem: odd-insertion},\ref{lem: even-insertion} guarantee the map $f_{n_0+1}$ exists  and that it is an involution, which completes our induction. 
\end{proof}
We now give the proof of Corollary \ref{cor: S10_S12}.

\begin{proof}[Proof of Corollary \ref{cor: S10_S12}]
    Observe from our earlier definitions that $S_{10} = T_1 + T_3$ and $S_{12} = T_2 + T_3$. Since the quadruples $(T_1, T_2, T_3, S_{17})$ and $(T_2, T_1, T_3, S_{17})$ are equidistributed over $\SSS_n$ by Theorem \ref{thm: main}, the result follows immediately.
\end{proof}
\subsection{Examples}
\begin{example}
We show an example of the involution for the permutation $p=36457821$. Table \ref{tab:bijection_trace} shows the insertion process. 
    \begin{table}[H]
    \centering
    \renewcommand{\arraystretch}{1.3}

    \setlength{\tabcolsep}{4pt}
    \begin{adjustbox}{width=\textwidth}
    \begin{tabular}{c l ccccc l ccccc}
        \hline
        $n$ & $p$ & $\cl{A}(p)$ & $\cl{B}(p)$ & $\cl{C}(p)$ & $\cl{D}(p)$ & $\cl{E}(p)$ & $p'$ & $\cl{A}(p')$ & $\cl{B}(p')$ & $\cl{C}(p')$ & $\cl{D}(p')$ & $\cl{E}(p')$ \\
        \hline
        1 & 1 & 0 & 1 & 1 & 0 & 0 & 1 & 0 & 1 & 1 & 0 & 0 \\
        2 & 2 1 & 1 & 1 & 1 & 0 & 0 & 2 1 & 1 & 1 & 1 & 0 & 0 \\
        3 & 3 2 1 & 0 & 2 & 2 & 0 & 0 & 2 1 3 & 1 & 1 & 2 & 0 & 0 \\
        4 & 3 4 2 1 & 1 & 1 & 2 & 1 & 0 & 2 1 4 3 & 1 & 1 & 2 & 1 & 0 \\
        5 & 3 4 5 2 1 & 0 & 2 & 3 & 1 & 0 & 2 1 4 3 5 & 1 & 1 & 3 & 1 & 0 \\
        6 & 3 6 4 5 2 1 & 1 & 2 & 3 & 1 & 0 & 2 6 1 4 3 5 & 2 & 1 & 3 & 1 & 0 \\
        7 & 3 6 4 5 7 2 1 & 1 & 2 & 4 & 1 & 0 & 7 2 6 1 4 3 5 & 1 & 2 & 4 & 1 & 0 \\
        8 & 3 6 4 5 7 8 2 1 & 2 & 1 & 4 & 2 & 0 & 7 2 6 1 4 3 8 5 & 1 & 2 & 4 & 2 & 0 \\
        \hline
    \end{tabular}
    \end{adjustbox}
    \vspace{0.2cm}
    \caption{Step-by-step insertions showing $p$ and its image $p'$ under the involution, along with their respective statistics.}
    \label{tab:bijection_trace}
\end{table}
\end{example}
\begin{example}
    We show another example of the involution for the permutation $p=14356728$. Table \ref{tab:bijection_trace_2} shows the insertion process for the permutation.
    \begin{table}[H]
    \centering
    \renewcommand{\arraystretch}{1.3}
    \setlength{\tabcolsep}{3.5pt}
    \begin{adjustbox}{width=\textwidth}
    \begin{tabular}{c l ccccc l ccccc}
        \hline
        $n$ & $p$ & $\cl{A}(p)$ & $\cl{B}(p)$ & $\cl{C}(p)$ & $\cl{D}(p)$ & $\cl{E}(p)$ & $p'$ & $\cl{A}(p')$ & $\cl{B}(p')$ & $\cl{C}(p')$ & $\cl{D}(p')$ & $\cl{E}(p')$ \\
        \hline
        1 & 1 & 0 & 1 & 1 & 0 & 0 & 1 & 0 & 1 & 1 & 0 & 0 \\
        2 & 1 2 & 0 & 0 & 1 & 1 & 1 & 1 2 & 0 & 0 & 1 & 1 & 1 \\
        3 & 1 3 2 & 0 & 1 & 2 & 0 & 1 & 1 3 2 & 0 & 1 & 2 & 0 & 1 \\
        4 & 1 4 3 2 & 0 & 1 & 2 & 1 & 1 & 1 3 4 2 & 1 & 0 & 2 & 1 & 1 \\
        5 & 1 4 3 5 2 & 0 & 1 & 3 & 1 & 1 & 1 3 4 5 2 & 0 & 1 & 3 & 1 & 1 \\
        6 & 1 4 3 5 6 2 & 1 & 0 & 3 & 2 & 1 & 1 6 3 4 5 2 & 0 & 1 & 3 & 2 & 1 \\
        7 & 1 4 3 5 6 7 2 & 0 & 1 & 4 & 2 & 1 & 1 6 3 4 5 7 2 & 0 & 1 & 4 & 2 & 1 \\
        8 & 1 4 3 5 6 7 2 8 & 1 & 1 & 4 & 2 & 1 & 1 6 3 4 5 7 2 8 & 1 & 1 & 4 & 2 & 1 \\
        \hline
    \end{tabular}
    \end{adjustbox}
    \vspace{0.2cm}
    \caption{The step-by-step insertion for $p= 14356728$ and $p'=16345728$.}
    \label{tab:bijection_trace_2}
\end{table}
\end{example}
\begin{example}
 Table \ref{tab:bijection_trace_identity} shows the insertion process for the identity permutation of length $8$.
    \begin{table}[H]
    \centering
    \renewcommand{\arraystretch}{1.3}
    \setlength{\tabcolsep}{4pt}
     \begin{adjustbox}{width=\textwidth}
    \begin{tabular}{c l ccccc l ccccc}
        \hline
        $n$ & $p$ & $\cl{A}(p)$ & $\cl{B}(p)$ & $\cl{C}(p)$ & $\cl{D}(p)$ & $\cl{E}(p)$ & $p'$ & $\cl{A}(p')$ & $\cl{B}(p')$ & $\cl{C}(p')$ & $\cl{D}(p')$ & $\cl{E}(p')$ \\
        \hline
        1 & 1 & 0 & 1 & 1 & 0 & 0 & 1 & 0 & 1 & 1 & 0 & 0 \\
        2 & 1 2 & 0 & 0 & 1 & 1 & 1 & 1 2 & 0 & 0 & 1 & 1 & 1 \\
        3 & 1 2 3 & 0 & 1 & 2 & 1 & 0 & 1 2 3 & 0 & 1 & 2 & 1 & 0 \\
        4 & 1 2 3 4 & 0 & 0 & 2 & 2 & 1 & 1 2 3 4 & 0 & 0 & 2 & 2 & 1 \\
        5 & 1 2 3 4 5 & 0 & 1 & 3 & 2 & 0 & 1 2 3 4 5 & 0 & 1 & 3 & 2 & 0 \\
        6 & 1 2 3 4 5 6 & 0 & 0 & 3 & 3 & 1 & 1 2 3 4 5 6 & 0 & 0 & 3 & 3 & 1 \\
        7 & 1 2 3 4 5 6 7 & 0 & 1 & 4 & 3 & 0 & 1 2 3 4 5 6 7 & 0 & 1 & 4 & 3 & 0 \\
        8 & 1 2 3 4 5 6 7 8 & 0 & 0 & 4 & 4 & 1 & 1 2 3 4 5 6 7 8 & 0 & 0 & 4 & 4 & 1 \\
        \hline
    \end{tabular}
    \end{adjustbox}
    \vspace{0.2cm}
    \caption{Step-by-step insertions for the identity permutation, which is a fixed point under the involution.}
    \label{tab:bijection_trace_identity}
\end{table}
\end{example}

\section*{Acknowledgements}

This project was initiated during the ``NCM Workshop: Topics in Geometric Combinatorics'' held at CMI, Chennai in July 2025. The authors are grateful to the organizers for creating a productive and inspiring research environment.


\bibliography{file}

@incollection {stanley-post-coxeter,
    AUTHOR = {Postnikov, Alexander and Stanley, Richard P.},
     TITLE = {Deformations of {C}oxeter hyperplane arrangements},
      NOTE = {In memory of Gian-Carlo Rota},
   JOURNAL = {J. Combin. Theory Ser. A},
  FJOURNAL = {Journal of Combinatorial Theory. Series A},
    VOLUME = {91},
      YEAR = {2000},
    NUMBER = {1-2},
     PAGES = {544--597},
      ISSN = {0097-3165,1096-0899},
   MRCLASS = {52C35},
  MRNUMBER = {1780038},
       DOI = {10.1006/jcta.2000.3106},
       URL = {https://doi.org/10.1006/jcta.2000.3106},
}

@article {stanley-hyperplane-interval-orders,
    AUTHOR = {Stanley, Richard P.},
     TITLE = {Hyperplane arrangements, interval orders, and trees},
   JOURNAL = {Proc. Nat. Acad. Sci. U.S.A.},
  FJOURNAL = {Proceedings of the National Academy of Sciences of the United
              States of America},
    VOLUME = {93},
      YEAR = {1996},
    NUMBER = {6},
     PAGES = {2620--2625},
      ISSN = {0027-8424},
   MRCLASS = {52B30 (05C30 06A06 20F55)},
  MRNUMBER = {1379568},
MRREVIEWER = {Hiroaki\ Terao},
       DOI = {10.1073/pnas.93.6.2620},
       URL = {https://doi.org/10.1073/pnas.93.6.2620},
}

@article {athana-linusson-bij-shi-regions,
    AUTHOR = {Athanasiadis, Christos A. and Linusson, Svante},
     TITLE = {A simple bijection for the regions of the {S}hi arrangement of
              hyperplanes},
   JOURNAL = {Discrete Math.},
  FJOURNAL = {Discrete Mathematics},
    VOLUME = {204},
      YEAR = {1999},
    NUMBER = {1-3},
     PAGES = {27--39},
      ISSN = {0012-365X,1872-681X},
   MRCLASS = {52C35},
  MRNUMBER = {1691861},
MRREVIEWER = {Johann\ Linhart},
       DOI = {10.1016/S0012-365X(98)00365-3},
       URL = {https://doi.org/10.1016/S0012-365X(98)00365-3},
}

@article {hetyei-acyclic-tourney,
    AUTHOR = {Hetyei, G\'abor},
     TITLE = {Alternation acyclic tournaments},
   JOURNAL = {European J. Combin.},
  FJOURNAL = {European Journal of Combinatorics},
    VOLUME = {81},
      YEAR = {2019},
     PAGES = {1--21},
      ISSN = {0195-6698,1095-9971},
   MRCLASS = {05A15 (05A05 05A19 05C20 05C30)},
  MRNUMBER = {3949635},
MRREVIEWER = {Wayne\ M.\ Dymacek},
       DOI = {10.1016/j.ejc.2019.04.007},
       URL = {https://doi.org/10.1016/j.ejc.2019.04.007},
}

@incollection {stanley-hyperplane-notes,
    AUTHOR = {Stanley, Richard P.},
     TITLE = {An introduction to hyperplane arrangements},
 BOOKTITLE = {Geometric combinatorics},
    SERIES = {IAS/Park City Math. Ser.},
    VOLUME = {13},
     PAGES = {389--496},
 PUBLISHER = {Amer. Math. Soc., Providence, RI},
      YEAR = {2007},
      ISBN = {978-0-8218-3736-8; 0-8218-3736-2},
   MRCLASS = {52C35 (05B35 55R80)},
  MRNUMBER = {2383131},
       DOI = {10.1090/pcms/013/08},
       URL = {https://doi.org/10.1090/pcms/013/08},
}

@article {kreweras-genocchi,
    AUTHOR = {Kreweras, G.},
     TITLE = {Sur les permutations compt\'ees par les nombres de {G}enocchi
              de 1-i\`ere et 2-i\`eme esp\`ece},
   JOURNAL = {European J. Combin.},
  FJOURNAL = {European Journal of Combinatorics},
    VOLUME = {18},
      YEAR = {1997},
    NUMBER = {1},
     PAGES = {49--58},
      ISSN = {0195-6698,1095-9971},
   MRCLASS = {05A15 (11B68)},
  MRNUMBER = {1427604},
MRREVIEWER = {Volker\ Strehl},
       DOI = {10.1006/eujc.1995.0081},
       URL = {https://doi.org/10.1006/eujc.1995.0081},
}

@article {equi-desc-ap-pv-deutsch-kitaev,
    AUTHOR = {Deutsch, Emeric and Kitaev, Sergey and Remmel, Jeffrey},
     TITLE = {Equidistribution of descents, adjacent pairs, and place-value
              pairs on permutations},
   JOURNAL = {J. Integer Seq.},
  FJOURNAL = {Journal of Integer Sequences},
    VOLUME = {12},
      YEAR = {2009},
    NUMBER = {5},
     PAGES = {Article 09.5.1, 19},
      ISSN = {1530-7638},
   MRCLASS = {05A05},
  MRNUMBER = {2520840},
}

@article {kitaev-remmel-parity,
    AUTHOR = {Kitaev, Sergey and Remmel, Jeffrey},
     TITLE = {Classifying descents according to parity},
   JOURNAL = {Ann. Comb.},
  FJOURNAL = {Annals of Combinatorics},
    VOLUME = {11},
      YEAR = {2007},
    NUMBER = {2},
     PAGES = {173--193},
      ISSN = {0218-0006,0219-3094},
   MRCLASS = {05A15},
  MRNUMBER = {2336014},
MRREVIEWER = {Marc\ Noy},
       DOI = {10.1007/s00026-007-0313-2},
       URL = {https://doi.org/10.1007/s00026-007-0313-2},
}

@article {lazar-wachs-hom-linial,
    AUTHOR = {Lazar, Alexander and Wachs, Michelle L.},
     TITLE = {The homogenized {L}inial arrangement and {G}enocchi numbers},
   JOURNAL = {Comb. Theory},
  FJOURNAL = {Combinatorial Theory},
    VOLUME = {2},
      YEAR = {2022},
    NUMBER = {1},
     PAGES = {Paper No. 2, 34},
      ISSN = {2766-1334},
   MRCLASS = {52C35 (05A05 05A15 05B35 06A07 11B68)},
  MRNUMBER = {4405991},
MRREVIEWER = {Piotr\ Pokora},
       DOI = {10.5070/c62156874},
       URL = {https://doi.org/10.5070/c62156874},
}

@article {whitney-logical-exp, 
    AUTHOR = {Whitney, Hassler},
     TITLE = {A logical expansion in mathematics},
   JOURNAL = {Bull. Amer. Math. Soc.},
  FJOURNAL = {Bulletin of the American Mathematical Society},
    VOLUME = {38},
      YEAR = {1932},
    NUMBER = {8},
     PAGES = {572--579},
      ISSN = {0002-9904},
   MRCLASS = {99-04},
  MRNUMBER = {1562461},
       DOI = {10.1090/S0002-9904-1932-05460-X},
       URL = {https://doi.org/10.1090/S0002-9904-1932-05460-X},
}

@article {zaslavsky-facing-arrangments,
    AUTHOR = {Zaslavsky, Thomas},
     TITLE = {Facing up to arrangements: face-count formulas for partitions
              of space by hyperplanes},
   JOURNAL = {Mem. Amer. Math. Soc.},
  FJOURNAL = {Memoirs of the American Mathematical Society},
    VOLUME = {1},
      YEAR = {1975},
     PAGES = {vii+102},
      ISSN = {0065-9266,1947-6221},
   MRCLASS = {05A15},
  MRNUMBER = {357135},
MRREVIEWER = {E.\ Jucovi\v c},
       DOI = {10.1090/memo/0154},
       URL = {https://doi.org/10.1090/memo/0154},
}
\bibliographystyle{acm}
\end{document}